\numberwithin{equation}{section}
\newcommand{\defeq}{\vcentcolon=}
\newcommand{\eqdef}{=\vcentcolon}
\newcommand{\N}{\mathbb{N}}
\newcommand{\Z}{\mathbb{Z}}
\newcommand{\R}{\mathbb{R}}
\newcommand{\V}{\mathbb{V}}
\newcommand{\1}{\mathbbm{1}}
\newcommand{\comp}{\mathsf{c}}
\newcommand{\x}{\mathsf{x}}
\newcommand{\y}{\mathsf{y}}
\newcommand{\Prm}{\mathrm{P}}
\newcommand{\Erm}{\mathrm{E}}
\newcommand{\Varm}{\mathrm{Var}}
\newcommand{\Prmp}{\mathrm{P}_{\!p}}
\newcommand{\Ermp}{\mathrm{E}_{p}}
\newcommand{\Prob}{\mathbb{P}}
\newcommand{\Probi}{\mathrm{P}_{\!\mathit{i}}}
\newcommand{\Probnull}{\mathrm{P}_{\!0}}
\newcommand{\E}{\mathbb{E}}
\newcommand{\Cluster}{\mathcal{C}}
\newcommand{\back}{\mathcal{B}}
\newcommand{\bfnull}{\mathbf{0}}
\newcommand{\Rpre}{\mathcal{R}^{\mathrm{pre}}}
\newcommand{\Rprelambda}{\mathcal{R}^{\mathrm{pre},\lambda}}
\newcommand{\Res}{\mathcal{R}_{\mathrm{eff}}}
\newcommand{\Con}{\mathcal{C}_{\mathrm{eff}}}
\newcommand{\distto}{\stackrel{\mathrm{d}}{\to}}
\newcommand{\F}{\mathcal{F}}
\newcommand{\G}{\mathcal{G}}
\renewcommand{\H}{\mathcal{H}}
\newcommand{\vel}{\overline{\mathrm{v}}}
\newcommand{\lambdacrit}{\lambda_{\mathrm{c}}}
\newcommand{\pesc}{\mathit{p}_{\mathrm{esc}}}
\newcommand{\domega}{\mathrm{d}\omega}
\newcommand{\auskommentiert}[1]{\textcolor{cyan}{}}
\journalname{Arxiv}
\begin{document}

\title{Einstein relation for random walk in a one-dimensional percolation model
}

\titlerunning{Einstein relation for random walk in a one-dimensional percolation model}        

\author{
Nina Gantert		\and
Matthias Meiners	\and
Sebastian  M\"uller 
}


\institute{Nina Gantert \at
		Fakult\"at f\"ur Mathematik,
		Technische Universit\"at M\"unchen,
		85748 Garching bei M\"unchen, Germany.
		\email{gantert@ma.tum.de}		
		\and
		Matthias Meiners \at
		Institut f\"ur Mathematik, Universit\"at Innsbruck, 6020 Innsbruck, Austria.
		\email{matthias.meiners@uibk.ac.at}
		\and
		Sebastian M\"uller \at
		Aix Marseille Universit\'e, CNRS, Centrale Marseille, I2M UMR 7373, 13453, Marseille, France.
		\email{sebastian.muller@univ-amu.fr}
}

\date{Received: \today / Accepted: date}

\maketitle

\begin{abstract}
We consider random walks on the infinite cluster of a conditional bond percolation model on the infinite ladder graph.
In a companion paper, we have shown that if the random walk is pulled to the right by a positive bias $\lambda > 0$,
then its asymptotic linear speed $\vel$ is continuous in the variable $\lambda > 0$ and differentiable for all sufficiently small $\lambda > 0$.
In the paper at hand, we complement this result by proving that $\vel$ is differentiable at $\lambda = 0$.
Further, we show the Einstein relation for the model, i.e., that the derivative of the speed at $\lambda = 0$
equals the diffusivity of the unbiased walk.

\keywords{Einstein relation \and invariance principle \and ladder graph \and percolation \and random walk}
\subclass{82B43 \and 60K37}
\end{abstract}

\section{Introduction}
We continue our study of regularity properties in \cite{Gantert+al:2018} of a biased random walk on an infinite one-dimensional percolation cluster
introduced by Axelson-Fisk and H\"aggstr\"om \cite{Axelson-Fisk+H"aggstr"om:2009}.
The model was introduced as a tractable model that exhibits similar phenomena as biased random walk on the supercritical percolation model in $\Z^{d}$.
The bias, whose strength is given by some parameter $\lambda>0$, favors the walk to move in a pre-specified direction.  

There exists a critical bias $\lambdacrit \in (0,\infty)$ such that for $\lambda\in (0, \lambdacrit)$ the walk has positive speed while for $\lambda \geq \lambdacrit$ the speed is zero,
see  Axelson-Fisk and H\"aggstr\"om \cite{Axelson-Fisk+H"aggstr"om:2009b}.
The reason for the existence of these two different regimes is that the percolation cluster contains traps (or dead ends) and the walk faces two competing effects.
When the bias becomes larger the time spent in such traps (peninsulas stretching out in the direction of the bias) increases
while the time spent on the backbone (consisting of infinite paths in the direction of the bias) decreases.
Once the bias is sufficiently large the expected time the walk stays in a typical trap is infinite and the speed of the walk becomes zero.

Even though the model may be considered as one of the easiest
non-trivial models for a random walk on a percolation cluster, explicit calculation for the speed $\vel=\vel(\lambda)$ could not be
carried out. The main result of our previous work \cite{Gantert+al:2018}  is that the speed (for fixed
percolation parameter $p$) is continuous in $\lambda$ on $(0,\infty)$.
The continuity  of the speed may seem obvious, but to our best knowledge,
it has not been proved for a biased random walk on a percolation cluster, and not even for biased random walk on Galton-Watson trees.
Moreover, we proved in \cite{Gantert+al:2018} that the speed is differentiable in
$\lambda$ on $(0,\lambdacrit/2)$ and we characterized the derivative as the
covariance of a suitable two-dimensional Brownian motion.

This paper studies the regularity of the speed in $\lambda=0$.
In particular, we establish the Einstein relation for the model: we prove that $\vel$ is differentiable at $\lambda = 0$
and that the derivative at $\lambda = 0$ equals the variance of the scaling limit of the unbiased walk.

The Einstein relation is conjectured to be true in general for reversible motions which behave diffusively.
We refer to Einstein \cite{Einstein} for a historical reference and to Spohn \cite{Spohn} for further explanations.
A weaker form of the Einstein relation holds indeed true under such general assumptions and goes back to Lebowitz and Rost \cite{Lebowitz+Rost:94}.
However, the Einstein relation in the stronger form as described above was only established (or disproved) in examples.
For instance, Loulakis \cite{Loulakis:2002,Loulakis:2005} considers a tagged particle in an exclusion process,
Komorowski and Olla \cite{Komorowski+Olla:2005} and Avena, dos Santos and V\"ollering \cite{Avena+2013} investigate other examples of space-time environments. 

Komorowski and Olla  \cite{Komorowski+Olla:2005a} treat a first example of random walks with random conductances on $\Z^{d}, d\geq 3$,
and Gantert, Guo and Nagel \cite{Gantert+Guo+Nagel:17}
establish the Einstein relation for random walks among i.i.d.~uniform elliptic random conductances on $\Z^{d}, d\geq 1$.
In  dimension one the Einstein relation can be proved via explicit calculations, see Ferrari {\it et al.}~\cite{Ferrari++:1985}.
There are only few results for non-reversible situations, see Guo \cite{Guo:16} and Komorowski and Olla \cite{Komorowski+Olla:2005}.
We want to stress that while the differentiability of the speed might appear as natural  or obvious,
there are examples where the speed is not differentiable, see Faggionato and Salvi \cite{Faggionato+Salvi:18}.

Despite this recent progress not much is known in models with \emph{hard traps}, e.g.\ random conductances without uniform ellipticity condition or percolation clusters.
The first result in this direction is  Ben Arous {\it et al.}~\cite{BenArous+Hu+Olla+Zeitouni:2013} that proves the Einstein relation for  certain biased random walks on Galton-Watson trees. 
An additional difficulty in our model is that the traps are not only \emph{hard} but do also have an influence on the structure of the \emph{backbone}.
Our paper is the first, to our knowledge, to prove the  Einstein relation for a model with hard traps and dependence of traps and backbone.
Although the structure of the traps is elementary the decoupling of traps and backbone is one of the major difficulties we encountered.

We prove a quenched (joint) functional limit theorem via the corrector method, see Section \ref{sec:corrector},
with additional moment bounds for the distance of the walk from the origin.
The law of the unbiased walk is compared with the law of the biased one using a Girsanov transform.
The difference between these measures is quantified using the above joint limit theorem.
Finally, we use regeneration times that depend on the bias and appropriate double limits to conclude
that the derivative of the speed equals a covariance, see Section \ref{sec:proof einstein}.
It remains then to identify the covariance as the variance of the unbiased walk, see Equation \eqref{eq:E[BM]=E[B^2]}.

\section{Preliminaries and main results}	\label{sec:main results}

In this section we introduce the percolation model.
The reader is referred to Figure \ref{fig:conditional percolation} for an illustration.

\paragraph{Percolation on the ladder graph.}
Let $\mathcal{L} = (V,E)$ be the infinite ladder graph
with vertex set $V = \Z \times \{0,1\}$ and edge set $E = \{\langle v,w\rangle: v,w \in V, |v-w|=1\}$
where $\langle v,w\rangle$ is an unordered pair
and $|\cdot|$ the standard Euclidean norm in $\R^2$.
We also write $v \sim w$ for ${\langle v,w\rangle \in E}$, and say that $v$ and $w$ are neighbors.

Axelson-Fisk and H\"aggstr\"om \cite{Axelson-Fisk+H"aggstr"om:2009} introduced
a percolation model on $\mathcal{L}$
that may be called `i.\,i.\,d.~bond percolation on the ladder graph conditioned on the existence of a bi-infinite path'.
We give a short review of this model.

Let $\Omega \defeq \{0,1\}^E$. We call $\Omega$ the \emph{configuration space},
its elements $\omega \in \Omega$ are called \emph{configurations}.
A path in $\mathcal{L}$ is a finite or infinite sequence of distinct edges connecting a finite or infinite sequence of neighboring vertices.
For a given $\omega \in \Omega$,
we call a path $\pi$ in $\mathcal{L}$ \emph{open} if $\omega(e)=1$ for each edge $e$ from $\pi$.
If $\omega \in \Omega$ and $v \in V$,
we denote by $\Cluster_{\omega}(v)$ the connected component in $\omega$ that contains $v$,
i.\,e.,
\begin{equation*}
\Cluster_{\omega}(v)	= \{w \in V: \text{there is an open path in } \omega \text{ connecting } v \text{ and } w\}.
\end{equation*}
We write $\x: V \to \Z$ and $\y:V \to \{0,1\}$ for the projections from $V$ to $\Z$ and $\{0,1\}$, respectively.
Then $v = (\x(v),\y(v))$ for every $v \in V$.
We call $\x(v)$ and $\y(v)$ the $\x$- and $\y$-coordinate of $v$, respectively.
For $N_1, N_2 \in \N$, let $\Omega_{N_1,N_2}$ be the set of configurations
in which there exists an open path from some $v_1 \in V$ with $\x(v_1)=-N_1$ to some $v_2 \in V$ with $\x(v_2) = N_2$.
Further, let $\Omega^* \defeq \bigcap_{N_1, N_2 \geq 0} \Omega_{N_1,N_2}$
be the set of configurations which have an infinite path connecting $-\infty$ and $+\infty$.

Denote by $\F$ the $\sigma$-field on $\Omega$ generated by the projections $p_e: \Omega \to \{0,1\}$,
$\omega \mapsto \omega(e)$, $e \in E$.
For $p \in (0,1)$, let $\mu_p$ be the probability distribution on $(\Omega,\F)$
which makes $(\omega(e))_{e \in E}$ an independent family of Bernoulli variables
with $\mu_p(\omega(e)=1)=p$ for all $e \in E$.
Then $\mu_p(\Omega^*)=0$ by the Borel-Cantelli lemma.
Write $\Prm_{p,N_1,N_2}(\cdot) \defeq \mu_p(\cdot \cap \Omega_{N_1,N_2})/\mu_p(\Omega_{N_1,N_2})$
for the probability distribution on $\Omega$
that arises from conditioning on the existence of an open path from $\x$-coordinate $-N_1$ to $\x$-coordinate $N_2$.
The measures $\Prm_{p,N_1,N_2}(\cdot)$ converge weakly as $N_1,N_2 \to \infty$
as was shown in \cite[Theorem 2.1]{Axelson-Fisk+H"aggstr"om:2009}:

\begin{theorem}
The distributions $\Prm_{p,N_1,N_2}$ converge weakly as $N_1,N_2 \!\to\! \infty$
to a probability measure $\Prmp^*$ on $(\Omega,\F)$ with $\Prmp^*(\Omega^*)=1$.
\end{theorem}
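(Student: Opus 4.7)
The plan is to combine compactness with a regeneration/renewal structure that the ladder graph admits, exploiting its essentially one-dimensional nature.

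\textbf{Compactness and reduction to cylinder events.} The space $\Omega=\{0,1\}^E$ is compact in the product topology, so the family $\{\Prm_{p,N_1,N_2}\}$ is automatically tight and has subsequential weak limits. Since cylinder events (depending on finitely many edges) form a convergence-determining class and are continuity sets of every probability measure on $(\Omega,\F)$, it suffices to show that for every cylinder event $A$ depending on edges in columns $[-K,K]$, the numbers $\Prm_{p,N_1,N_2}(A)$ converge as $N_1,N_2\to\infty$.

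\textbf{Cut points.} I would introduce \emph{cut columns}: an integer $x$ is a cut column of $\omega\in\Omega_{N_1,N_2}$ if every open path from $\x$-coordinate $-N_1$ to $\x$-coordinate $N_2$ uses one specific horizontal edge between columns $x$ and $x+1$ (for instance, exactly one of the two horizontal edges there is open, it lies on the open path, and the pillar edge at one of the adjacent columns is in a configuration that forces the path to pass through a unique vertex). The key point, which I would verify by an easy case analysis for the four edges joining columns $x$ and $x+1$ together with the two pillars, is that cut columns have a local definition in terms of a bounded window of edges, and under the i.i.d.\ measure $\mu_p$ each integer has a positive probability to be a cut column.

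\textbf{Renewal decomposition.} Conditionally on the event $\Omega_{N_1,N_2}$ and on the positions of the cut columns, the edges in disjoint blocks between consecutive cut columns become conditionally independent, and the conditional law inside each block is that of $\mu_p$ conditioned on the block being traversed by a single open path compatible with the cut structure. Hence, for a cylinder event $A$ supported in $[-K,K]$, if $L$ is the largest cut column $\le -K$ and $R$ the smallest cut column $\ge K$, then $\Prm_{p,N_1,N_2}(A\mid L,R)$ depends only on $(L,R)$ and not on $(N_1,N_2)$ once $N_1,N_2$ are large. Thus the convergence of $\Prm_{p,N_1,N_2}(A)$ reduces to the convergence of the joint law of $(L,R)$, which follows from a two-sided renewal theorem applied to the i.i.d.\ sequence of inter–cut-column gaps. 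This delivers the limit measure $\Prmp^*$ and automatically gives stationarity and ergodicity of $\Prmp^*$ under $\x$-translations.

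\textbf{Concentration on $\Omega^*$.} Under $\Prmp^*$, cut columns form a stationary point process with positive intensity, so almost surely there are cut columns accumulating at both $\pm\infty$; concatenating the open paths witnessed at these cut columns produces a bi-infinite open path, giving $\Prmp^*(\Omega^*)=1$. The main obstacle in executing this plan is a careful local definition of cut columns together with the verification that the two blocks on either side of a cut column are genuinely conditionally independent under the measures $\Prm_{p,N_1,N_2}$; once that decoupling is in place, the renewal argument and identification of the limit are routine.
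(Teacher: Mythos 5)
This theorem is not proved in the paper at all: it is quoted verbatim from Axelson-Fisk and H\"aggstr\"om \cite{Axelson-Fisk+H"aggstr"om:2009}, whose argument describes the conditioned measure column by column as a finite-state Markov chain (via a transfer-matrix computation) and obtains the limit from convergence of that chain. Your renewal-at-cut-columns sketch is therefore a different route, though it is morally the regeneration structure that the present paper exploits later (the pre-regeneration points, where $(i,1)$ is isolated, are exactly the local ``cut columns'' you are looking for: since the top horizontal edge between columns $i$ and $i+1$ is then closed, every crossing path must use the bottom edge there, and since a path uses each edge at most once, the pre-crossing and post-crossing portions live on disjoint edge sets, which gives the factorization of $\Omega_{N_1,N_2}$ and hence the conditional independence of the two sides under $\mu_p$). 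Your first proposed definition of a cut column is global (``every open path uses a specific edge''); you should commit to the local isolated-vertex definition from the start, as the decoupling argument genuinely needs it.

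The genuine gap is in the renewal step. The observation that each integer is a cut column with positive $\mu_p$-probability is essentially irrelevant: what you need is the law of the gap between consecutive cut columns under the \emph{conditioned} measure. Writing $f_n$ for the $\mu_p$-probability that $0$ and $n$ are cut columns joined by a crossing with no intermediate cut column, the cut-column process under $\Prm_{p,N_1,N_2}$ is a renewal process whose increment weights are proportional to $f_n$ only after normalization by $\mu_p(\Omega_{N_1,N_2})$, which itself decays exponentially in $N_1+N_2$. So the renewal theorem must be applied to the exponentially tilted distribution $\tilde f_n \propto f_n\Lambda^{-n}$ (with $\Lambda$ the decay rate of the crossing probability), and one has to prove that this tilted distribution is a proper, aperiodic probability distribution with finite mean --- equivalently, that block lengths between consecutive cut columns have exponential tails under the conditioned measure. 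This is the quantitative heart of the theorem (it is the content of the exponential-moment bound for $\x(R_1^{\mathrm{pre}})-\x(R_0^{\mathrm{pre}})$ invoked in Lemma \ref{Lem:effective resistances}), and your proposal does not address it; without it the renewal process could in principle be defective or have infinite mean gaps, and the limit $(L,R)$ would not exist. Once that estimate is supplied, the rest of your outline (tightness, reduction to cylinders, convergence of $(L,R)$, and concatenation of crossings at the a.s.\ infinite set of cut columns to get $\Prmp^*(\Omega^*)=1$) is sound.
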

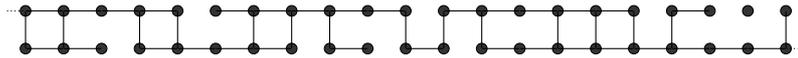
\begin{figure}[htp]
\begin{center}
\pgfmathsetseed{845133}
\begin{tikzpicture}[thin, scale=0.5,-,
                   shorten >=0pt+0.5*\pgflinewidth,
                   shorten <=0pt+0.5*\pgflinewidth,
                   every node/.style={circle,
                                      draw,
                                      fill          = black!80,
                                      inner sep     = 0pt,
                                      minimum width =4 pt}]

\def \p {0.75}

\foreach \x in {-10,-9,-8,-7,-6,-5,-4,-3,-2,-1,0,1,2,3,4,5,6,7,8,9,10}
\foreach \y in {0,1}
    \node at (\x,\y) {};

\foreach \x in {-10,-9,-8,-7,-6,-5,-4,-3,-2,-1,0,1,2,3,4,5,6,7,8,9}{
\foreach \y in {0,1}{
    \pgfmathparse{rnd}
    \let\dummynum=\pgfmathresult
    \ifdim\pgfmathresult pt < \p pt\relax \draw (\x,\y) -- (\x+1,\y);\fi
  }}
  
\foreach \x in {-10,-9,-8,-7,-6,-5,-4,-3,-2,-1,0,1,2,3,4,5,6,7,8,9,10}{
\foreach \y in {0}{
    \pgfmathparse{rnd}
    \let\dummynum=\pgfmathresult
    \ifdim\pgfmathresult pt < \p pt\relax \draw (\x,\y) -- (\x,\y+1);\fi
  }}

	\draw[densely dotted] (-10.5,1) -- (-10,1);
	\draw (-3,1) -- (-2,1);
	\draw (9,0) -- (10,0);
	\draw[densely dotted] (10,0) -- (10.5,0);
\end{tikzpicture}
\end{center}
\caption{A piece of the cluster sampled according to $\Prmp^*$.}
\label{fig:conditional percolation}
\end{figure}
For any $\omega \in \Omega^*$, we write $\Cluster = \Cluster_\omega$ for the $\Prmp^*$-a.\,s.\ unique infinite open cluster.
We write $\mathbf{0} \defeq (0,0)$ und define $\Omega_{\mathbf{0}} \defeq \{\omega \in \Omega^*: \mathbf{0} \in \Cluster\}$ and
\begin{equation*}
\Prmp(\cdot) \defeq \Prmp^*(\cdot | \Omega_{\mathbf{0}}).
\end{equation*}

\paragraph{Random walk on the infinite percolation cluster.}

Throughout the paper,
we keep $p \in (0,1)$ fixed and consider random walks in a percolation environment sampled according to $\Prmp$.
The model to be introduced next goes back to Axelson-Fisk and H\"agg\-str\"om \cite{Axelson-Fisk+H"aggstr"om:2009b},
who used a different parametrization.

We work on the space $V^{\N_0}$
equipped with the $\sigma$-algebra $\G = \sigma(Y_n: n \in \N_0)$
where $Y_n: V^{\N_0} \to V$ denotes the projection from $V^{\N_0}$ onto the $n$th coordinate.
Let $P_{\omega,\lambda}$ be the distribution on $V^{\N_0}$ that makes $Y \defeq (Y_n)_{n \in \N_0}$
a Markov chain on $V$ with initial position $\mathbf{0}$
and transition probabilities $p_{\omega,\lambda}(v,w)=P_{\omega,\lambda}(Y_{n+1} = w \mid Y_n = v)$
defined via
\begin{equation}	\label{eq:P_xi lazy}
p_{\omega,\lambda}(v,w)
=
\begin{cases}
\frac{e^{\lambda (\x(w)-\x(v))}}{e^{\lambda}+1+e^{-\lambda}} \1_{\{\omega(\langle v,w \rangle)=1\}}				&	\text{if } v \sim w,	\\
\sum_{u \sim v} \frac{e^{\lambda (\x(u)-\x(v))}}{e^{\lambda}+1+e^{-\lambda}} \1_{\{\omega(\langle u,v \rangle)=0\}}	&	\text{if } v=w,	\\
0																						&	\text{otherwise}.
\end{cases}
\end{equation}
We write $P^{\mathbf{0}}_{\omega,\lambda}$ to emphasize the initial position $\mathbf{0}$,
and $P^v_{\omega,\lambda}$ for the distribution of the Markov chain with the same transition probabilities but initial position $v \in V$.
The joint distribution of $\omega$ and $(Y_n)_{n \in \N_0}$ on $(\Omega \times \V^{\N_0},\F \otimes \G)$
when $\omega$ is drawn at random according to a probability measure $Q$ on $(\Omega,\F)$
is denoted by $Q \times P^v_{\omega,\lambda} \eqdef \Prob_{Q,\lambda}^v$ where $v$ is the initial position of the walk.
(Notice that, in slight abuse of notation, we consider $Y_n$ also as a mapping from $\Omega \times V^{\N_0}$
to $V$.)
We refer to \cite{Gantert+al:2018} for a formal definition.
We write $\Prob_{\lambda}^v$ for $\Prob_{\Prmp,\lambda}^v$,
$\Prob_{\lambda}$ for $\Prob_{\lambda}^{\mathbf{0}}$
and $\Prob^*_{\lambda}$ for $\Prob_{\Prmp^*,\lambda}^{\mathbf{0}}$.
If the walk starts at $v=\bfnull$, we sometimes omit the superscript $\bfnull$.
Further, if $\lambda=0$, we sometimes omit $\lambda$ as a subscript,
and write $p_{\omega}$ for $p_{\omega,0}$, and $\Prob$ for $\Prob_{0}$.

\paragraph{The speed of the random walk.}

Axelson-Fisk and H\"aggstr\"om \cite[Proposition 3.1]{Axelson-Fisk+H"aggstr"om:2009b}
showed that $(Y_n)_{n \in \N_0}$ is recurrent under $\Prob_0$ and transient under $\Prob_{\lambda}$ for $\lambda \not = 0$.
Moreover, there is a critical bias $\lambdacrit \in (0,\infty)$ separating the ballistic from the sub-ballistic regime.
More precisely, if one denotes by $X_n \defeq \x(Y_n)$ the projection of $Y_n$ on the $\x$-coordinate, the following result holds.

\begin{proposition}	\label{Prop:SLLN}
For any $\lambda \geq 0$, there exists a deterministic constant $\vel(\lambda) = \vel(p,\lambda) \in [0,1]$ such that
\begin{equation*}	\textstyle
\frac{X_n}{n}	~\to~	 \vel(\lambda)	\quad	\Prob_{\lambda} \text{-a.\,s.\ as } n \to \infty.
\end{equation*}
Further, there is a critical bias $\lambdacrit = \lambdacrit(p) > 0$
(for which an explicit expression is available) such that
\begin{equation*}
\vel(\lambda) > 0	\text{ for } 0 < \lambda < \lambdacrit
\quad	\text{ and }	\quad
\vel(\lambda) = 0	\text{ for } \lambda = 0 \text{ and } \lambda \geq \lambdacrit.
\end{equation*}
\end{proposition}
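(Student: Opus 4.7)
My plan is to split the argument according to whether $\lambda = 0$ or $\lambda > 0$. For $\lambda = 0$, I would start from the recurrence already noted before the proposition, establish ergodicity of the $\x$-shift on $(\Omega,\Prmp^*)$ via the regenerative description of the conditioned cluster from \cite{Axelson-Fisk+H"aggstr"om:2009}, and then consider the environment-seen-from-the-walker chain $(\theta_{Y_n}\omega)_{n\geq 0}$, which is reversible with respect to the degree-biased law. Birkhoff's ergodic theorem applied to the cocycle $X_{n+1}-X_n$ yields $X_n/n \to c$ almost surely for some constant $c$, and the left-right symmetry of $\Prmp^*$ forces $c=0$, so $\vel(0)=0$.

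For $\lambda>0$ the walk is transient, and I would build a regeneration structure adapted to the ladder. Call $x\in\Z$ a \emph{cut level} if exactly one of the two horizontal edges between columns $x$ and $x+1$ is open in $\omega$; every infinite path in $\Cluster$ must then cross that single edge. Let $\tau_1$ be the first time at which the walk traverses such an edge from left to right and never returns, and define $\tau_2,\tau_3,\dots$ analogously. Under $\Prmp$ cut levels have positive density by the regenerative representation, and a strong Markov argument in the spirit of Sznitman--Zerner shows that the pairs $(\tau_{k+1}-\tau_k,\, X_{\tau_{k+1}}-X_{\tau_k})_{k\geq 1}$ are i.i.d.\ under $\Prob_\lambda$, with $X_{\tau_2}-X_{\tau_1}$ having exponential tails inherited from the cut-level density.

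The critical quantity is $\E_\lambda[\tau_2-\tau_1]$, and its size is governed by the time spent in \emph{pendants} (finite dead-end subgraphs attached to the backbone). A pendant of depth $\ell$ visited by the $\lambda$-biased walker contributes expected time of order $e^{2\lambda\ell}$ via the classical gambler's-ruin computation for biased nearest-neighbour walk on a segment with a reflecting end. Under $\Prmp$, the depth $L$ of a typical pendant has a geometric-type tail $\Prmp(L\geq \ell) = \Theta(q^\ell)$ with an explicit $q = q(p)\in(0,1)$ coming from the Bernoulli marginals. Hence
\begin{equation*}
\E_\lambda[\tau_2 - \tau_1] \;\asymp\; \sum_{\ell \geq 0} q^\ell\, e^{2\lambda\ell},
\end{equation*}
which is finite iff $\lambda < \lambdacrit \defeq -\tfrac12\log q$. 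In that range the i.i.d.\ strong law applied to the regeneration increments gives
\begin{equation*}
\frac{X_n}{n}\;\longrightarrow\;\frac{\E_\lambda[X_{\tau_2}-X_{\tau_1}]}{\E_\lambda[\tau_2-\tau_1]}\;\eqdef\;\vel(\lambda)\;>\;0,
\end{equation*}
while for $\lambda \geq \lambdacrit$ the divergence of $\E_\lambda[\tau_2-\tau_1]$ combined with transience forces $X_n/n\to 0$ almost surely.

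The main obstacle I anticipate is the decoupling between traps and backbone under the conditional law $\Prmp^*$: the conditioning creates correlations between the shape of the pendants and the geometry of the backbone that block a naive Bernoulli computation. I would bypass this by working throughout with the Markov-renewal representation of the conditioned cluster from \cite{Axelson-Fisk+H"aggstr"om:2009}, in which the backbone evolves as a finite-state hidden Markov chain and the pendants at distinct backbone positions become conditionally independent. Once this representation is available, both the positive density of cut levels and the geometric tail estimate for $L$ reduce to finite computations in that hidden chain, the explicit value of $\lambdacrit$ can be read off, and the two regimes of $\vel(\lambda)$ drop out of the regenerative LLN.
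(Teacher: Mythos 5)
Your proposal is correct in outline but takes a very different route from the paper: for $\lambda>0$ the paper does not reprove anything, it simply invokes Theorem 3.2 of \cite{Axelson-Fisk+H"aggstr"om:2009b}, and only the case $\lambda=0$ is argued, via ergodicity of the increment sequence $(X_n-X_{n-1})_{n\in\N}$ (Lemma \ref{Lem:ergodic theory input}), Birkhoff's theorem and $\E[X_1]=0$ --- which is essentially your $\lambda=0$ argument. What you propose for $\lambda>0$ is in effect a reconstruction of the Axelson-Fisk--H\"aggstr\"om proof: a bottleneck-based regeneration structure, a gambler's-ruin estimate of order $e^{2\lambda\ell}$ for the time in a trap of depth $\ell$, a geometric tail $q^{\ell}$ for trap depths under the conditioned law, and the resulting $\lambdacrit=-\tfrac12\log q$; this is consistent with the paper's own use of the bound $\Prob_\lambda(\ell_i=m)\leq c(p)\,m\,e^{-2\lambdacrit m}$ elsewhere. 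Your approach buys a self-contained argument and an explicit $\lambdacrit$, at the cost of having to carry out the hidden-Markov decoupling that the citation sidesteps. Two caveats. First, your claim that the regeneration increments $(\tau_{k+1}-\tau_k,\,X_{\tau_{k+1}}-X_{\tau_k})_{k\geq1}$ are i.i.d.\ is almost certainly too strong: the conditioning ``never return past the previous cut level'' couples adjacent increments through the intervening block of environment, and indeed the paper's own $\lambda$-regeneration structure (Lemma \ref{Lem:lambda-regeneration times and points}) yields only a $1$-dependent sequence; this is harmless for the LLN (split into even and odd indices) but should be stated correctly. Second, the passage from $\E_\lambda[\tau_2-\tau_1]=\infty$ to $X_n/n\to0$ needs the standard interpolation argument ($\tau_k/k\to\infty$ a.s.\ for nonnegative $1$-dependent stationary summands with infinite mean, then sandwich $X_n$ between consecutive regeneration positions using the exponential tail of $X_{\tau_{k+1}}-X_{\tau_k}$); you assert it but do not supply it.
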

\begin{proof}
For $\lambda > 0$ this is Theorem 3.2 in \cite{Axelson-Fisk+H"aggstr"om:2009b}.
For $\lambda = 0$, the sequence of increments $(X_n-X_{n-1})_{n \in \N}$ is ergodic
by Lemma \ref{Lem:ergodic theory input} below.
Birkhoff's ergodic theorem implies
$\vel(0) = \lim_{n \to \infty} \frac{X_n}{n} = \E[X_1] = 0$ $\Prob$-a.\,s.
\end{proof}

\paragraph{Functional central limit theorem for the unbiased walk.}

In a preceding paper \cite{Gantert+al:2018},
we have shown that $\vel$ is differentiable as a function of $\lambda$ on the interval $(0,\lambdacrit/2)$,
and continuous on $(0,\infty)$.
In this paper, we show that $\vel$ is also differentiable at $0$ with $\vel'(0) = \sigma^2$
where $\sigma^2$ is the limiting variance of $n^{-1/2} X_n$ under the distribution $\Prob_0$.
This is the Einstein relation for the model.
Clearly, a necessary prerequisite for the Einstein relation
is the central limit theorem for the unbiased walk.

Before we provide the central limit theorem for the unbiased walk,
we introduce some notation.
As usual, for $t \geq 0$, we write $\lfloor t \rfloor$ for the largest integer $\leq t$.
Then, we define
\begin{equation*}	\textstyle
B_n \defeq \big(\frac{X_{\lfloor nt\rfloor}}{\sqrt{n}}\big)_{0 \leq t \leq 1}
\end{equation*}
for each $n \in \N$.
The random function $B_n$ takes values in the Skorokhod space $D([0,1])$
of right-continuous real-valued functions with existing left limits.
We denote by ``$\Rightarrow$'' convergence in distribution of random variables in the Skorokhod space $D[0,1]$,
see \cite[Chapter 3]{Billingsley:1968} for details.

\begin{theorem}	\label{Thm:FCLT}
There exists a constant $\sigma = \sigma(p) \in (0,\infty)$ such that
\begin{equation}	\label{eq:invariance principle}	\textstyle
B_n	\Rightarrow	\sigma B
\quad	\text{under } P_{\omega}
\end{equation}
for $\Prmp$-almost all $\omega \in \Omega$ where $B$ is a standard Brownian motion.
\end{theorem}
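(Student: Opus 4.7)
The plan is to carry out the corrector method foreshadowed in the introduction. The first step is to pass from the trajectory to the \emph{environment seen from the particle}: set $\bar\omega_n \defeq \theta_{Y_n}\omega$, where $\theta_v$ denotes the shift of a configuration by $-v$, so that $\bar\omega_n$ is a Markov chain on $\Omega_\bfnull$. Using the reversibility of the unbiased walk, one checks that the measure
\[
\nu(\domega) \;=\; \frac{\deg_\omega(\bfnull)}{\Ermp[\deg_\omega(\bfnull)]}\,\Prmp(\domega)
\]
is invariant for $(\bar\omega_n)_{n \in \N_0}$, and it is equivalent to $\Prmp$ because $\deg_\omega(\bfnull) \in \{1,2,3\}$ is uniformly bounded above and below. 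Ergodicity of $(\bar\omega_n)$ under $\nu$ then follows from the translation-ergodicity of $\Prmp$ (Lemma \ref{Lem:ergodic theory input}) together with the recurrence of $Y$ under $\Prob$, via the standard De Masi--Ferrari--Goldstein--Wick argument.

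Next I would construct a corrector $\chi: \Omega_\bfnull \times V \to \R$, defined on pairs $(\omega,v)$ with $v \in \Cluster_\omega$, such that the process $M_n \defeq X_n + \chi(\omega, Y_n) - \chi(\omega, \bfnull)$ is a square-integrable $P_\omega$-martingale with stationary ergodic increments when $\omega \sim \nu$. The construction is the familiar Hilbert-space one (Kipnis--Varadhan, Mathieu--Piatnitski, Biskup): realise the gradient of $\chi$ as minus the $L^2(\nu)$-orthogonal projection of the $\x$-increment cocycle onto the closure of gradient fields of local functions; the uniform boundedness of $\deg$ makes every projection well-defined. The discrete-time martingale functional CLT then gives $n^{-1/2} M_{\lfloor n\cdot\rfloor} \Rightarrow \sigma B$ under $\nu \otimes P_\omega$, with $\sigma^2$ expressed as a Dirichlet-form quantity. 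A quenched upgrade to $\Prmp$-a.e.\ $\omega$ is obtained by the usual tightness-plus-concentration argument, exploiting the strong mixing of $\Prmp$ provided by the renewal decomposition at the pinch points discussed below.

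The main obstacle, as always with the corrector method, is the sublinearity statement
\[
\frac{1}{\sqrt{n}}\bigl(\chi(\omega, Y_n) - \chi(\omega, \bfnull)\bigr) \;\longrightarrow\; 0
\]
in $P_\omega$-probability for $\Prmp$-a.e.\ $\omega$. Here the one-dimensional geometry is a decisive asset. The conditioned cluster $\Cluster$ admits \emph{pinch points} -- vertices $v \in \Cluster$ whose removal disconnects $\Cluster$ into a left-infinite and a right-infinite component -- and by the block-renewal description underlying $\Prmp$ the spacings between consecutive pinch points are i.i.d.\ with exponentially decaying tails. This reduces sublinearity of $\chi$ to a block-sum / Borel--Cantelli argument: the increments of $\chi$ across consecutive pinch points form an i.i.d.\ sequence with finite variance (inherited from the $L^2(\nu)$ bound on the martingale increments), hence $\chi(\omega, v) = o(|v|)$ $\Prmp$-a.s.\ along $\Cluster_\omega$. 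Combined with the tightness of $X_n/\sqrt{n}$ obtained from the martingale part and the trivial bound $|Y_n| \leq |X_n|+1$, this yields the desired sublinearity and, via the identity $X_n = M_n - \chi(\omega,Y_n) + \chi(\omega,\bfnull)$, the invariance principle \eqref{eq:invariance principle}. The constant $\sigma$ is strictly positive because the $\x$-increment cocycle is not itself a gradient in $L^2(\nu)$, as can be seen by evaluating it on an excursion between two pinch points.
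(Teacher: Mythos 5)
Your plan is sound and follows the same corrector-method skeleton as the paper, but the key construction differs. The paper does \emph{not} use the abstract Kipnis--Varadhan $L^2$-projection: it builds the harmonic coordinate $\psi$ explicitly from the one-dimensional renewal structure, setting $\psi$ at the $n$th pre-regeneration point equal to (a constant multiple of) the partial sum of the effective resistances $1/C_k$ of the blocks between consecutive pre-regeneration points, and interpolating inside a block by effective resistance to the left endpoint. This explicitness pays off twice: positivity of $\sigma^2=\E[\psi(Y_1)^2]$ is immediate from the resistance formula, and, more importantly for the rest of the paper, it yields the quantitative corrector bound $|\chi(\omega,v)|\le K(\omega)+\varepsilon|\x(v)|^{1/2+\delta}$ with $K\in L^2$ (Lemma \ref{Lem:almost sure bounds corrector}), which feeds the moment estimate \eqref{eq:supsecondmoment} needed later for the Einstein relation. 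The soft $o(|\x(v)|)$ sublinearity your projection argument produces suffices for Theorem \ref{Thm:FCLT} itself but not for those later steps. Your ``pinch points'' are exactly the paper's pre-regeneration points, and your block-sum sublinearity argument matches the paper's in spirit. One further stylistic difference: the paper gets the quenched statement directly by verifying the Lindeberg conditions of the martingale FCLT $P_\omega$-a.s.\ through a quenched Birkhoff theorem (Lemma \ref{Lem:Birkhoff quenched}), with no separate annealed-to-quenched concentration step.

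One genuine, though easily repaired, error: for the walk defined by \eqref{eq:P_xi lazy} at $\lambda=0$, each of the three neighbours is attempted with probability $1/3$ and the walk holds otherwise, so every vertex carries the same total weight and the invariant (indeed reversing) measure for the environment chain is $\Prmp$ itself, not the degree-biased measure you wrote down; your proposed check of invariance would fail as stated, and the stationarity underlying your martingale CLT would be computed with respect to the wrong measure. Likewise, translation-ergodicity of the percolation law is Lemma \ref{Lem:omega is ergodic}, not Lemma \ref{Lem:ergodic theory input}. Neither slip affects the architecture of the argument, but both need correcting before the plan can be executed.
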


It is worth mentioning that an annealed invariance principle for $(B_n)_{n \in \N}$ follows
without much effort from \cite{DeMasi+al:1989}.
In principle, we do not require a quenched central limit theorem for the proof of the Einstein relation.
However, we do require a joint central limit theorem for $B_n$ together with a certain martingale $M_n$,
see Theorem \ref{Thm:joint CLT} below.
Therefore, we cannot directly apply the results from \cite{DeMasi+al:1989}.
On the other hand, in the proof of the Einstein relation we use precise bounds on the corrector.
Using similar arguments as Berger and Biskup \cite{Berger+Biskup:2007},
these bounds almost immediately give the quenched invariance principle.

\paragraph{Einstein relation.}

We are now ready to formulate the Einstein relation:
\begin{theorem}	\label{Thm:Einstein relation}
The speed $\vel$ is differentiable at $\lambda=0$ with derivative
\begin{equation}	\label{eq:Einstein relation}	\textstyle
\vel'(0) = \lim_{\lambda \downarrow 0} \frac{\vel(\lambda)}{\lambda} = \sigma^2
\end{equation}
where $\sigma^2$ is given by Theorem \ref{Thm:FCLT}.
\end{theorem}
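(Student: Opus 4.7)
The plan is to combine the joint quenched invariance principle of Theorem~\ref{Thm:joint CLT} with a Girsanov-type change of measure and a family of $\lambda$-dependent regeneration times, following the strategy announced in the introduction. For $0 < \lambda < \lambdacrit$ the walk is ballistic, so one can construct regeneration times $\tau = \tau^{(\lambda)}$ at which past and future of the walk decouple, and the renewal quotient representation
\begin{equation*}
\vel(\lambda) \;=\; \frac{\E_\lambda[X_\tau]}{\E_\lambda[\tau]}
\end{equation*}
becomes the starting point. The idea is to express both the numerator and the denominator under $\Prob_0$ through a change-of-measure formula, expand in $\lambda$, and pass to the limit $\lambda \downarrow 0$.

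The change of measure is read off directly from \eqref{eq:P_xi lazy}: under $\Prob_\lambda$ each step of the walk carries an exponential tilt of the form $e^{\lambda \Delta \x}$ with normalizing constant $e^\lambda+1+e^{-\lambda}$, so that (up to an auxiliary factor coming from the self-loops) the quenched Radon--Nikodym derivative with respect to $\Prob_0$ has leading exponent $\lambda X_n - n\,c(\lambda)$, where $c(\lambda) = \log\tfrac{e^\lambda+1+e^{-\lambda}}{3} = \tfrac{\lambda^2}{3}+O(\lambda^4)$. Combined with the corrector decomposition $X_n = M_n + \chi(Y_n) - \chi(Y_0)$ of Section~\ref{sec:corrector}, where $M_n$ is a $\Prob_0$-martingale and $\chi$ is a sublinear corrector with appropriate moment bounds, the Girsanov exponent becomes, up to lower-order terms on the diffusive scale, $\lambda M_n - n\,c(\lambda)$. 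Theorem~\ref{Thm:joint CLT} supplies the joint quenched invariance principle for $(X_{\lfloor nt\rfloor},M_{\lfloor nt\rfloor})/\sqrt n$, so that on the scale $n = \lfloor t/\lambda^2 \rfloor$ the change-of-measure density converges to the Cameron--Martin density associated with the Gaussian limit of $M_n/\sqrt n$.

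Substituting this into $\E_\lambda[X_\tau]/\E_\lambda[\tau]$ and expanding to first order in $\lambda$ produces, after dividing by $\lambda$ and taking the limit, a covariance $\mathrm{Cov}(\sigma B(1),M(1))$ between the diffusive limits of $X_n/\sqrt n$ and $M_n/\sqrt n$ under $\Prob_0$. Because $X_n - M_n$ is controlled by the sublinear corrector, the two limits agree, and the covariance reduces to $\mathrm{Var}(\sigma B(1)) = \sigma^2$; this is the identification announced in \eqref{eq:E[BM]=E[B^2]}. The main obstacle is making this differentiation rigorous: one must exchange the limits $\lambda \downarrow 0$ and $\tau^{(\lambda)} \to \infty$ while simultaneously replacing deterministic times $n \asymp 1/\lambda^2$ by the $\lambda$-dependent regeneration time. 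The traps in the cluster create heavy-tailed sojourn times whose distribution itself depends on $\lambda$, so uniform integrability along this double limit is delicate, and the quantitative bounds on both the corrector $\chi$ and on $\sup_{k \le n} |X_k|$ announced in the introduction are the key quantitative inputs needed to push the argument through.
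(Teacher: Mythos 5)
Your outline follows essentially the same route as the paper: a discrete Girsanov transform combined with the joint quenched invariance principle gives the Lebowitz--Rost-type limit $\lim \E_{\lambda}[X_n]/(\lambda n) = \E[B(1)M(1)]$ along $\lambda^2 n \to \alpha$, and the $\lambda$-dependent regeneration structure together with the second-moment bounds on the regeneration times and on $\max_{k\le n}|X_k|$ closes the gap to $\vel(\lambda)/\lambda$ in the double limit, after which \eqref{eq:E[BM]=E[B^2]} identifies the covariance as $\sigma^2$. The only caveats are that the second-order Girsanov term is the random functional $\lambda^2 A_n$ (converging a.s.\ to $\tfrac{\alpha}{2}\E[M(1)^2]$ by ergodicity) rather than the deterministic $n\,c(\lambda)$ you write, and that the score martingale $M_n=\sum_{k}\nu_{\omega}(Y_{k-1},Y_k)$ is not literally $X_n$ minus the corrector (they differ on lazy steps), but neither point alters the argument.
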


\paragraph{The joint functional central limit theorem.}

As in \cite{Gantert+al:2018}, the proof of the differentiability of the speed
is based on a joint central limit theorem for $X_n$
and the leading term of a suitable density.

To make this precise, we first introduce some notation.
For $v \in V$, let $N_{\omega}(v) \defeq \{w \in V: p_{\omega,0}(v,w) > 0\}$.
Thus, $N_{\omega}(v) \not = \varnothing$, even for isolated vertices.
For $w \in N_{\omega}(v)$, the function $\log p_{\omega,\lambda}(v,w)$ is differentiable at $\lambda=0$.
Hence, we can write a first-order Taylor expansion of $\log p_{\omega,\lambda}(v,w)$ around $\lambda = 0$ in the form
\begin{equation}	\label{eq:Taylor expansion}
\log p_{\omega,\lambda}(v,w)
= \log p_{\omega}(v,w)
+ \lambda \nu_{\omega}(v,w)+ \lambda o(\lambda)
\end{equation}
where $\nu_{\omega}(v,w)$ is the derivative of  $\log p_{\omega,\lambda}(v,w)$ at $0$
and $o(\lambda)$ converges to $0$ as $\lambda\to0$.
Since there is only a finite number of $1$-step transition probabilities,
$o(\lambda) \to 0$ as $\lambda \to 0$ uniformly (in $v$, $w$ and $\omega$).

For all $v$ and all $\omega$, $p_{\omega,\lambda}(v,\cdot)$ is a probability measure
on $N_{\omega}(v)$ and hence
\begin{equation*}	\textstyle
\sum_{w\in N_{\omega}(v)} \nu_{\omega}(v,w) p_{\omega}(v,w) = 0.
\end{equation*}
Therefore, for fixed $\omega$,
the sequence $(M_{n})_{n \geq 0}$ where $M_{0}=0$ and,
for $n \in \N$,
\begin{equation*}	\textstyle
M_{n}=\sum_{k=1}^{n} \nu_{\omega}(Y_{k-1},Y_{k})
\end{equation*}
is a $P_{\omega}$-martingale with respect to the canonical filtration of the walk $(Y_k)_{k \in \N_0}$.
Clearly, $M_n$ is a (measurable) function of $\omega$ and $(Y_k)_{k \in \N_0}$
and thus a random variable on $\Omega \times V^{\N_0}$.
The sequence $(M_n)_{n \geq 0}$ is also a martingale under the annealed measure $\Prob$.

\begin{theorem}	\label{Thm:joint CLT}
Let $p \in (0,1)$. Then, for $\Prmp$-almost all $\omega \in \Omega$,
\begin{equation}	\label{eq:joint invariance principle}
(B_n(t),n^{-1/2} M_{\lfloor nt \rfloor})		\Rightarrow	(B,M)
\quad	\text{under } P_{\omega}
\end{equation}
where $(B,M)$ is a two-dimensional centered Brownian motion with deterministic covariance matrix
$\Sigma = (\sigma_{ij})_{i,j=1,2}$.
Further, it holds that
\begin{equation}	\label{eq:E[BM]=E[B^2]}
\sigma_{12} = \sigma_{21} = \E[B(1)M(1)] = \E[B(1)^2] = \sigma^2.
\end{equation}
\end{theorem}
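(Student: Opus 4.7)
The plan is to apply a multidimensional martingale functional CLT to the $\R^2$-valued process $(M^X_n, M_n)$, where $M^X_n$ is the Kipnis--Varadhan martingale associated with $X_n$. Concretely, I would construct a corrector $\chi = \chi_\omega \colon \Cluster \to \R$ so that $\Phi(v) \defeq \x(v) - \chi(v)$ is harmonic for the walk, equivalently, $M^X_n \defeq X_n - (\chi(Y_n) - \chi(Y_0))$ is a $P_\omega$-martingale. This is the standard $L^2$ construction, using reversibility of $Y$ with respect to the counting measure on $\Cluster$ together with ergodicity of the environment viewed from the walker (provided by Lemma \ref{Lem:ergodic theory input}). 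Using the explicit backbone-plus-traps description of the conditional ladder cluster, one then upgrades $L^2$-existence of $\chi$ to the pathwise bound $\sup_{k \leq n}|\chi(Y_k)|/\sqrt{n} \to 0$ $P_\omega$-a.\,s., following the approach of Berger and Biskup \cite{Berger+Biskup:2007}.

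Once the sublinearity of $\chi$ is established, \eqref{eq:joint invariance principle} reduces to the joint FCLT for the martingale pair $(M^X_{\lfloor n\cdot\rfloor}, M_{\lfloor n\cdot\rfloor})/\sqrt{n}$. I would apply the vector-valued martingale FCLT. The Lindeberg condition is trivial because both one-step increments are uniformly bounded in $\omega$. For the quadratic covariations, Birkhoff's ergodic theorem applied to the environment process gives, $P_\omega$-a.\,s.,
\begin{equation*}
\tfrac{1}{n}\langle M^i, M^j \rangle_n \;\longrightarrow\; \sigma_{ij} \defeq \E_{Q_*}\!\bigl[E_\omega[\Delta M^i_1 \Delta M^j_1 \mid Y_0]\bigr],
\end{equation*}
where $Q_*$ denotes the stationary law of the environment seen from the walker. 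This delivers the joint FCLT with covariance matrix $\Sigma = (\sigma_{ij})$.

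The identification $\sigma_{12} = \sigma^2$ is a short calculation that leverages two features of $\nu_\omega$: (i) $\nu_\omega(v,w) = \x(w) - \x(v)$ whenever $v \sim w$ and (ii) the self-loop event $\{Y_1 = v\}$ forces $X_1 - X_0 = 0$ and $\chi(Y_1) - \chi(v) = 0$. Together these yield, for every $v$,
\begin{equation*}
E_\omega[(X_1 - X_0)\,\nu_\omega(Y_0, Y_1) \mid Y_0 = v] \;=\; E_\omega[(X_1 - X_0)^2 \mid Y_0 = v],
\end{equation*}
and an analogous identity with $X_1 - X_0$ replaced by $\chi(Y_1) - \chi(v)$. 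Substituting these into $\Delta M^X_1 = (X_1 - X_0) - (\chi(Y_1) - \chi(v))$ and $\Delta M_1 = \nu_\omega(v, Y_1)$ reduces the claim to
\begin{equation*}
\E_{Q_*}\!\bigl[E_\omega[(\chi(Y_1) - \chi(v))(\Phi(Y_1) - \Phi(v)) \mid Y_0 = v]\bigr] = 0.
\end{equation*}
Expanding the product and using harmonicity of $\Phi$ (i.e., $E_\omega[\Phi(Y_1)\mid v] = \Phi(v)$) eliminates three of the four terms; the remaining contribution vanishes by reversibility of the environment process under $Q_*$, since swapping $v$ and $Y_1$ produces the negative of itself.

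The main obstacle is the quenched sublinearity of the corrector. The $L^2$ construction is standard, but the pathwise bound $\sup_{k \leq n}|\chi(Y_k)| = o(\sqrt{n})$ needed for the quenched statement demands polynomial moment control on the increments of $\chi$. The one-dimensional structure of the conditional cluster, with its geometric-tailed peninsular traps and renewal structure along the backbone, should make these estimates tractable, but this is where the bulk of the technical work is concentrated; once it is in place, the passage from the martingale convergence to the joint functional CLT is routine.
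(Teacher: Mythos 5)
Your architecture --- corrector decomposition, a vector-valued martingale FCLT with Birkhoff's theorem for the quadratic covariations, then an identification of the cross covariance --- is the same as the paper's (Proposition \ref{Prop:martingale CLT} plus the sublinearity of $\chi$). One genuine divergence is the corrector construction: you invoke the abstract Kipnis--Varadhan $L^2$ construction and defer the quenched sublinearity to a Berger--Biskup-type upgrade, whereas the paper builds the harmonic function $\psi$ explicitly from effective resistances between pre-regeneration points and uses the exponential moments of cycle lengths and resistances (Lemma \ref{Lem:effective resistances}) to get the quantitative bound $|\chi(\omega,v)|\le K(\omega)+\varepsilon|\x(v)|^{1/2+\delta}$ with $\Ermp[K^2]<\infty$ (Lemma \ref{Lem:almost sure bounds corrector}). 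For Theorem \ref{Thm:joint CLT} itself mere sublinearity would do, but the $L^2$ control of $K$ is what later yields \eqref{eq:supsecondmoment} and the uniform integrability needed for the Einstein relation, so the explicit construction is not just a stylistic choice.

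The one step that does not survive scrutiny is your justification of
\begin{equation*}
\E\bigl[(\chi(Y_1)-\chi(Y_0))(\Phi(Y_1)-\Phi(Y_0))\bigr]=0 .
\end{equation*}
You claim the surviving term vanishes "by reversibility, since swapping $v$ and $Y_1$ produces the negative of itself." It does not: both increments are cocycles, hence \emph{each} is antisymmetric under time reversal ($\chi(\theta^{Y_1}\omega,-Y_1)=-\chi(\omega,Y_1)$ and likewise for $\Phi$), so their product is \emph{symmetric} and reversibility returns the same quantity, giving no information. The identity is still true, but the proof must use that $\chi(Y_1)-\chi(Y_0)$ lies in the $L^2$-closure of exact gradients $u(\overline{\omega}(1))-u(\overline{\omega}(0))$ of functions of the environment: for such a gradient the $u(\overline{\omega}(0))$-term dies by harmonicity of $\Phi$, and the $u(\overline{\omega}(1))$-term dies by reversibility combined with the antisymmetry of $\Delta\Phi$ \emph{alone}, followed by an approximation argument. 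This is automatic for the Kipnis--Varadhan corrector but is an extra property to verify for any other construction. The paper sidesteps the issue entirely: it passes to $\lim_n\frac1n\E[X_nM_n]$ and $\lim_n\frac1n\E[X_n^2]$ via uniform integrability (\eqref{eq:sup exp M} and \eqref{eq:max X_k^2 bound}), replaces $\nu_\omega(Y_{k-1},Y_k)$ by $X_k-X_{k-1}$ using \eqref{eq:nu_omega(v,w) ausgewertet}, and disposes of the cross term $\frac1n\sum_k\E[(\psi(Y_k)-\psi(Y_{k-1}))(\chi(Y_k)-\chi(Y_{k-1}))]$ using only the boundedness of the $\psi$-increments and the smallness of $\chi(Y_n)$, i.e.\ it trades the orthogonality argument for the quantitative corrector bounds it already has. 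Either route closes the proof, but yours needs the repair above.
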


As the martingale $M_n$ has bounded increments,
the Azuma-Hoeffding inequality \cite[E14.2]{Williams:1991} applies and gives the following
exponential integrability result, see the proof of Proposition 2.7 in \cite{Gantert+al:2018} for details.

\begin{proposition}	\label{Prop:sup exp M}
For every $t > 0$,
\begin{equation}	\label{eq:sup exp M}	\textstyle
\sup_{n \geq 1} \E_{\lambda}[e^{t n^{-1/2} M_n}] < \infty.
\end{equation}
\end{proposition}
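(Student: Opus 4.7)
The plan is to reduce the exponential integrability claim to an application of the Azuma-Hoeffding inequality by exploiting the uniform boundedness of the martingale increments of $M_n$. The first step is to verify that $|\nu_\omega(v,w)|\leq C$ for some universal constant $C>0$. Since the ladder graph $\mathcal{L}$ has maximum degree three, only finitely many local configurations of $\omega$ around a vertex $v$ are possible, and $p_{\omega,\lambda}(v,\cdot)$ is determined by the status of the (at most) three edges incident to $v$. The derivative $\nu_\omega(v,w)=\partial_\lambda\log p_{\omega,\lambda}(v,w)|_{\lambda=0}$ is therefore a bounded, deterministic function of that local configuration; direct inspection of \eqref{eq:P_xi lazy} yields $\nu_\omega(v,w)=\x(w)-\x(v)\in\{-1,0,1\}$ for open edges with $v\neq w$, and the self-loop contributions are bounded by $1$ in absolute value. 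Hence $|M_n-M_{n-1}|\leq C$ uniformly in $\omega$ and in the trajectory.

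Under the unbiased measure $\Prob$, $(M_n)$ is a martingale with bounded differences, so the Azuma-Hoeffding inequality gives
\begin{equation*}
\E\bigl[e^{\theta M_n}\bigr]\leq \exp\bigl(\theta^2 n C^2/2\bigr)\quad\text{for every }\theta\in\R.
\end{equation*}
Taking $\theta=t/\sqrt{n}$ yields $\E[e^{tn^{-1/2}M_n}]\leq e^{t^2 C^2/2}$, uniformly in $n$, which settles the case $\lambda=0$ and is the core input.

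For the biased measure, the natural strategy is a Girsanov-type change of variables. The Radon-Nikodym density of $\Prob_\lambda$ with respect to $\Prob$ on $\sigma(Y_0,\ldots,Y_n)$ is $L_n=\prod_{k=1}^n p_{\omega,\lambda}(Y_{k-1},Y_k)/p_\omega(Y_{k-1},Y_k)$, which by the Taylor expansion \eqref{eq:Taylor expansion} equals $e^{\lambda M_n+R_n}$ with a remainder $R_n$ controlled by second-order terms in $\lambda$ per step. Writing $\E_\lambda[e^{tn^{-1/2}M_n}]=\E[e^{tn^{-1/2}M_n}L_n]$ and applying Azuma-Hoeffding at the shifted exponent $\theta=tn^{-1/2}+\lambda$ transforms the problem into one of bounding the remainder contribution uniformly in $n$.

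The main obstacle is preventing $R_n$ from producing a factor that grows in $n$: a crude bound on the Taylor remainder would yield an unwanted $\exp(O(n\lambda^2))$ term. Absorbing this by exploiting $\E[L_n]=1$ and conditioning step by step on the canonical filtration of the walk, rather than applying Azuma-Hoeffding in one shot, is the cleanest way forward; the detailed execution is carried out in the proof of Proposition 2.7 of \cite{Gantert+al:2018}.
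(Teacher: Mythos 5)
Your argument is essentially the paper's own: the whole content of the proof is that $M_n$ has uniformly bounded increments $\nu_\omega(Y_{k-1},Y_k)$, so the Azuma--Hoeffding inequality at exponent $\theta=tn^{-1/2}$ gives $\E[e^{tn^{-1/2}M_n}]\le e^{t^2C^2/2}$ uniformly in $n$, with the remaining details (including the biased measure) deferred, exactly as you do, to the proof of Proposition~2.7 in \cite{Gantert+al:2018}. Note also that every invocation of \eqref{eq:sup exp M} in the present paper is of the form $\sup_{\lambda,n}\E[e^{c\lambda M_n}]$ under the \emph{unbiased} measure with $\lambda^2n$ bounded, i.e.\ with $t=c\lambda\sqrt n$ of order one, so the unbiased Azuma--Hoeffding bound you establish is precisely the input that gets used and the $e^{O(n\lambda^2)}$ remainder you flag in the Girsanov step never causes trouble in that regime.
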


We finish this section with an overview of the steps that lead to the proof of Theorem \ref{Thm:Einstein relation}.
\begin{enumerate}
	\item
		In Section \ref{sec:corrector}, we prove the joint central limit theorem, Theorem \ref{Thm:joint CLT}.
		The proof is based on the corrector method, which is a decomposition technique
		in which $Y_n$ is written as a martingale plus a corrector of smaller order.
		The martingale is constructed in Section \ref{sec:harmonic functions}.
		Many arguments are based on the method of the environment seen from the point of view of the walker.
	\item
		In Lemma \ref{Lem:max X_k^2 bound}, we prove that
		\begin{equation}	\label{eq:supsecondmoment}
		\sup_{n \in \N} \frac1n \E\Big[\max_{k=1,\ldots,n} X_k^2\Big] < \infty.
		\end{equation}
		The proof is based on estimates for the almost sure fluctuations of the corrector
		derived in Section \ref{sec:corrector}.
	\item
		Using the joint central limit theorem and \eqref{eq:supsecondmoment}, we show
		in Proposition \ref{Prop:3rd step} that, for any $\alpha > 0$,
		\begin{equation}	\label{eq:speed approx by covariance}
		\lim_{\substack{\lambda \to 0,\\ \lambda^2 n \to \alpha}} \frac{\E_{\lambda}[X_n]}{\lambda n}
		~=~ \E[B(1) M(1)].
		\end{equation}
		Equation \eqref{eq:speed approx by covariance} is a weak form of the Einstein relation going back to Lebo\-witz and Rost \cite{Lebowitz+Rost:94}.
	\item
		Finally, we show in Section \ref{subsec:final step} that
		\begin{equation}	\label{eq:2nd step}
		\lim_{\alpha\to\infty} \lim_{\substack{\lambda \to 0,\\ \lambda^{2} n \to \alpha}}
		\bigg[\frac{\vel(\lambda)}{\lambda} - \frac{\E_{\lambda}[X_n]}{\lambda n}\bigg]
		~=~ 0.
		\end{equation}
\end{enumerate}
Notice that \eqref{eq:2nd step} together with $\vel(0)=0$ implies
\begin{equation}	\label{eq:formula for the speed}
\vel'(0) = \lim_{\alpha\to\infty} \lim_{\substack{\lambda \to 0,\\ \lambda^{2} n \to \alpha}}
\frac{\E_{\lambda}[X_n]}{\lambda n} = \E[B(1) M(1)].
\end{equation}

\section{Background on the percolation model}	\label{sec:background}

In this section we provide some basic results on the percolation model.

\paragraph{Ergodicity of the percolation distribution.}
To ease notation, we identify $V$ with the additive group $\Z \times \Z_2$.
For instance, we write $(k,1)+(n,1)= (k+n,0)$ for $k,n \in \Z$.
With this notation, for $v \in V$, we define the shift $\theta^{v}:V \to V$, $w \mapsto w-v$.
The shift $\theta^{v}$ canonically extends to a mapping on the edges
and hence to a mapping on the configurations $\omega \in \Omega$.
In slight abuse of notation, we denote all these mappings by $\theta^{v}$.
The mappings $\theta^{v}$ form a commutative group since $\theta^{v} \theta^{w} = \theta^{v+w}$. 

The next result is contained in the proof of Lemma 5.5 in \cite{Axelson-Fisk+H"aggstr"om:2009b}.

\begin{lemma}	\label{Lem:omega is ergodic}
The probability measure $\Prmp^*$ is ergodic w.\,r.\,t.~the family of shifts $\theta^v$, $v \in V$,
that is, it is invariant under all shifts $\theta^v$ and for all shift-invariant sets $A \in \F$, we have $\Prmp^*(A) \in \{0,1\}$.
\end{lemma}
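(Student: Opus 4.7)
The plan is to prove the two parts of Lemma~\ref{Lem:omega is ergodic}, shift invariance and ergodicity, separately, leveraging that $\Prmp^*$ is the weak limit of $\Prm_{p,N_1,N_2}$ and that the underlying product measure $\mu_p$ is itself shift invariant.

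First I would establish shift invariance by a direct passage to the limit. Applying $\theta^{v}$ to $\Omega_{N_1,N_2}$ yields an event of the same form but with the parameters shifted by $\pm \x(v)$, so that, using $\mu_p \circ (\theta^v)^{-1} = \mu_p$, one gets $\Prm_{p,N_1,N_2} \circ (\theta^v)^{-1} = \Prm_{p,N_1',N_2'}$ for suitable $N_1',N_2' \to \infty$. Weak convergence of both sides to $\Prmp^*$ then forces $\Prmp^* \circ (\theta^v)^{-1} = \Prmp^*$. This first step is essentially formal.

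Next I would prove ergodicity by exploiting a Markov structure of $\Prmp^*$ along the ladder. Letting $\omega_k$ denote the triple consisting of the vertical edge variable at column $k$ together with the two horizontal edges from column $k$ to column $k{+}1$, the process $(\omega_k)_{k \in \Z}$ takes values in a finite state space and, under $\Prmp^*$, is a stationary and irreducible Markov chain. Irreducibility on a finite state space gives mixing under $\theta^{(1,0)}$, hence the desired $0$-$1$ law for $\theta^{(1,0)}$-invariant sets. The extension to arbitrary $\theta^v$-invariant sets is then immediate, since any set invariant under all $\theta^v$ is in particular $\theta^{(1,0)}$-invariant.

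The main obstacle is the Markov chain step. Verifying the Markov property under $\Prmp^*$ requires showing that the globally defined conditioning in $\Prm_{p,N_1,N_2}$ decouples, in the limit, into independent contributions to the left and right of any column once one specifies the ``interface'', namely which vertices of that column are connected to $\pm\infty$. This is a transfer-matrix observation that can be carried out along the lines of the proof of Lemma~5.5 of Axelson-Fisk and H\"aggstr\"om \cite{Axelson-Fisk+H"aggstr"om:2009b}. Once the Markov picture is in place, both stationarity and irreducibility of the kernel follow from the positivity of the Bernoulli weights and the shift invariance established in the first step.
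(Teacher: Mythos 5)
Your proposal is essentially the paper's approach: the paper does not prove this lemma itself but attributes it to the proof of Lemma~5.5 in Axelson-Fisk and H\"aggstr\"om, and your argument (shift invariance by passing to the limit in the conditioned measures $\Prm_{p,N_1,N_2}$, ergodicity from the transfer-matrix/Markov structure of the columns, reduction of invariance under all $\theta^v$ to invariance under $\theta^{(1,0)}$) is precisely a reconstruction of that cited proof. One small precision: under $\Prmp^*$ the raw column-edge process is a factor of a Markov chain whose state must also record the connectivity interface (which vertices of the column communicate with $\pm\infty$, cf.\ Theorem~3.1 of \cite{Axelson-Fisk+H"aggstr"om:2009}), rather than being Markov on its own; since ergodicity (for which irreducibility of the finite augmented chain already suffices, aperiodicity being needed only for mixing) passes to factors, your conclusion stands.
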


\paragraph{Cyclic decomposition.}
We introduce a decomposition of the percolation cluster into independent cycles.
A similar decomposition for the given model was first introduced in \cite{Axelson-Fisk+H"aggstr"om:2009b}.
If $(i,1)$ is isolated in $\omega$, we call $(i,0)$ a pre-regeneration point.
Cycles begin and end at pre-regeneration points.
These are bottlenecks in the graph which the walk has to visit in order to get past.
Let $\ldots, R^{\mathrm{pre}}_{-2}, R^{\mathrm{pre}}_{-1}, R^{\mathrm{pre}}_0, R^{\mathrm{pre}}_1, R^{\mathrm{pre}}_2, \ldots$
be an enumeration of the pre-regeneration points such that
$\ldots < \x(R^{\mathrm{pre}}_{-1}) < 0 \leq \x(R^{\mathrm{pre}}_0) <  \x(R^{\mathrm{pre}}_1) < \ldots$\,.
\medskip
\begin{center}
\pgfmathsetseed{845133}
\begin{tikzpicture}[thin, scale=0.5,-,
                   shorten >=0pt+0.5*\pgflinewidth,
                   shorten <=0pt+0.5*\pgflinewidth,
                   every node/.style={circle,
                                      draw,
                                      fill          = black!80,
                                      inner sep     = 0pt,
                                      minimum width =4 pt}]

\def \p {0.5}

\foreach \x in {-10,-9,-8,-7,-6,-5,-4,-3,-2,-1,0,1,2,3,4,5,6,7,8,9,10}
\foreach \y in {0,1}
    \node at (\x,\y) {};

\foreach \x in {-10,-9,-8,-7,-6,-5,-4,-3,-2,-1,0,1,2,3,4,5,6,7,8,9}{
\foreach \y in {0,1}{
    \pgfmathparse{rnd}
    \let\dummynum=\pgfmathresult
    \ifdim\pgfmathresult pt < \p pt\relax \draw (\x,\y) -- (\x+1,\y);\fi
  }}
  
\foreach \x in {-10,-9,-8,-7,-6,-5,-4,-3,-2,-1,0,1,2,3,4,5,6,7,8,9,10}{
\foreach \y in {0}{
    \pgfmathparse{rnd}
    \let\dummynum=\pgfmathresult
    \ifdim\pgfmathresult pt < \p pt\relax \draw (\x,\y) -- (\x,\y+1);\fi
  }}

	\draw[densely dotted] (-10.5,1) -- (-10,1);
	\draw[densely dotted] (-10.5,0) -- (-10,0);
	\draw (-8,1) -- (-7,1);
	\draw (-3,1) -- (-2,1);
	\draw (1,1) -- (2,1);
	\draw (6,0) -- (7,0);
	\draw (9,0) -- (10,0);
	\draw[densely dotted] (10,0) -- (10.5,0);
	\draw[densely dotted] (10,1) -- (10.5,1);	
	\node[draw=none,fill=none] at (0,-0.75) {$\mathbf{0}$};
	\node[draw=none,fill=none] at (6,-0.75) {$R^{\mathrm{pre}}_{0}$};
	\node[draw=none,fill=none] at (8,-0.75) {$R^{\mathrm{pre}}_{1}$};

	\draw (9,1) -- (10,1);
	\draw[white, thick] (-5,1) -- (-4,1);
	\foreach \x in {-10,-9,-8,-7,-6,-5,-4,-3,-2,-1,0,1,2,3,4,5,6,7,8,9,10}
\foreach \y in {0,1}
    \node at (\x,\y) {};
    \node[draw=none,fill=none] at (-5,-0.75) {$R^{\mathrm{pre}}_{-1}$};
\end{tikzpicture}
\end{center}
Let $\Rpre$ be the set of all pre-regeneration points.
Let $E^{i,\leq}, E^{i, \geq} \subseteq E$
consist of those edges
with both endpoints having $\x$-coordinate $\leq i$ or $\geq i$, respectively.
Further, let $E^{i,<} \defeq E \setminus E^{i, \geq}$ and $E^{i,>} \defeq E \setminus E^{i,\leq}$.
We denote the subgraph of $\omega$ with vertex set $\{v \in V: a \leq \x(v) \leq b\}$
and edge set $\{e \in E^{a,\geq} \cap E^{b,<}: \omega(e)=1\}$
by $[a,b)$ and call $[a,b)$ a \emph{block} (of $\omega$).
The pre-regeneration points split the percolation cluster into blocks
\begin{center}
$\omega_n \defeq [\x(R_{n-1}^{\mathrm{pre}}), \x(R_n^{\mathrm{pre}})),	\quad	n \in \Z.$
\end{center}
There are infinitely many pre-regeneration points on both sides of the origin $\Prmp$-a.\,s.
The random walk $(Y_n)_{n \geq 0}$ under $\Prob$
can be viewed as a random walk among random conductances $(\omega(e))_{e \in E}$ (with additional self-loops).
For $n \in \Z$, we define $C_{n}$ to be the effective conductance between $R_{n-1}^{\mathrm{pre}}$
and $R_{n}^{\mathrm{pre}}$.
To be more precise, consider the $n$th cycle $\omega_n$ as a finite network.
Then the effective resistance between $R_{n-1}^{\mathrm{pre}}$
and $R_{n}^{\mathrm{pre}}$ is well-defined, see \cite[Section 9.4]{Levin+Peres+Wilmer:2009}.
We denote this effective resistance by $1/C_n$ and the effective conductance by $C_n$.
We further define $L_n \defeq L(\omega_n)$ to be the length of the $n$th cycle,
i.\,e., $L_n = \x(R_{n}^{\mathrm{pre}})-\x(R_{n-1}^{\mathrm{pre}})$.
We summarize the two definitions:
\begin{equation}	\label{eq:C_n and L_n}
C_n \defeq \mathcal{C}_{\text{eff}}(R_{n-1}^{\mathrm{pre}} \leftrightarrow R_{n}^{\mathrm{pre}})
\quad	\text{and}	\quad
L_n = \x(R_{n}^{\mathrm{pre}})-\x(R_{n-1}^{\mathrm{pre}}).
\end{equation}
For later use, we note the following lemma.

\begin{lemma}	\label{Lem:effective resistances}
The family $\{(C_{n},L_{n}): n \in \Z\}$ is independent and the $(C_{n},L_{n})$, $n \in \Z \setminus \{0\}$,
are identically distributed. Further, there is some $\vartheta > 0$ such that
\begin{equation}	\label{eq:exponential moments resistances}	\textstyle
\Ermp[\exp(\vartheta/C_{n})] + \Ermp[\exp(\vartheta L_n)] < \infty
\end{equation}
for all $n \in \Z$.
\end{lemma}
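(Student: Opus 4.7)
The plan is to establish the three claims---independence of the family $\{(C_n, L_n) : n \in \Z\}$, identical distribution for $n \neq 0$, and exponential moments---by exploiting the locality of the pre-regeneration condition together with the classical shortest-path bound on effective resistance.

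\textbf{Independence and identical distribution.} The key observation is that whether column $i$ is a pre-regeneration column depends only on the isolation of $(i,1)$, an event determined by the three edges incident to that vertex. Modulo this local boundary dependence, different blocks are supported on disjoint sets of edges, so under the product measure $\mu_p$ they would be independent given the pre-regeneration column locations. Under $\Prmp^*$, the explicit renewal description developed in \cite{Axelson-Fisk+H"aggstr"om:2009b} identifies the sequence of blocks with an i.i.d.\ renewal sequence (apart from the size-biased block containing the origin), from which independence of $\{(C_n, L_n) : n \in \Z\}$ follows. Since the conditioning $\bfnull \in \Cluster$ defining $\Prmp$ from $\Prmp^*$ affects only the block indexed by $0$, independence is preserved, and shift-invariance of $\Prmp^*$ (Lemma \ref{Lem:omega is ergodic}) yields identical distribution for $n \neq 0$.

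\textbf{Exponential moment of $L_n$.} The event that $(i,1)$ is isolated has probability $(1-p)^3 > 0$ under $\mu_p$ and is bounded below by a positive constant uniformly in $i$ under $\Prmp^*$ as well, by the renewal description. Combined with a blocking argument to decouple the overlapping three-edge dependence across columns, this shows that $L_n$ for $n \neq 0$ is stochastically dominated by a geometric random variable, so $\Ermp[\exp(\vartheta L_n)] < \infty$ for all sufficiently small $\vartheta > 0$. The block indexed by $0$ is size-biased but retains exponential moments, and the passage from $\Prmp^*$ to $\Prmp$ changes expectations by at most the multiplicative factor $1/\Prmp^*(\Omega_{\bfnull})$, so the bound transfers.

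\textbf{Exponential moment of $1/C_n$.} The elementary network-theoretic bound states that the effective resistance between two vertices in a unit-resistance network is at most the length of any path connecting them. Because $\mathcal{L}$ is a ladder graph, the block $\omega_n$ contains at most $3L_n + O(1)$ edges, and the open path from $R_{n-1}^{\mathrm{pre}}$ to $R_n^{\mathrm{pre}}$ guaranteed by membership in the infinite cluster has length at most this many. Thus $1/C_n \leq C' L_n$ deterministically for a constant $C' > 0$, which reduces the exponential moment bound on $1/C_n$ to the bound on $L_n$ already proved.

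\textbf{Main obstacle.} The principal subtlety lies in establishing the block-wise independence under the conditioned measure $\Prmp^*$ rather than under $\mu_p$, since conditioning on the existence of a bi-infinite open path is a non-local event and thus not obviously compatible with the block decomposition. This is resolved by the explicit renewal representation of $\Prmp^*$ developed in \cite{Axelson-Fisk+H"aggstr"om:2009b}, which makes the block factorization manifest and drives both the independence claim and the geometric domination of the inter-arrival times.
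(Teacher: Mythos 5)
Your proposal follows essentially the same route as the paper: the paper obtains the independence, identical distribution, and the exponential moment of $L_1$ by citing Lemma 3.3 of the companion paper \cite{Gantert+al:2018} (which rests on the same renewal description of $\Prmp^*$ you invoke), and then bounds $1/C_n$ exactly as you do, via Rayleigh's monotonicity law and the series law applied to a single open self-avoiding path of length $O(L_n)$. Your sketch of the block factorization and the geometric tail of the inter-regeneration distances is a correct outline of what that cited lemma establishes, so the argument is sound and matches the paper's.
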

\begin{proof}
By Lemma 3.3 in \cite{Gantert+al:2018}, under $\Prmp$,
the family $(\theta^{R_{n-1}^{\mathrm{pre}}} \omega_n)_{n \in \Z}$
is independent and all cycles except cycle $0$ have the same distribution.
Hence the family $((C_n,L_n))_{n \in \Z}$ is independent and the $(C_n,L_n)$, $n \in \Z$, $n \not = 0$
are identically distributed.
Lemma 3.3(b) in \cite{Gantert+al:2018} gives that
$L_1 = \x(R_{1}^{\mathrm{pre}})-\x(R_0^{\mathrm{pre}})$ has a finite exponential moment of some order $\vartheta' > 0$.
By Raleigh's monotonicity law \cite[Theorem 9.12]{Levin+Peres+Wilmer:2009}, $C_{1}^{-1}$,
the effective resistance between $R_{1}^{\mathrm{pre}}$ and $R_0^{\mathrm{pre}}$,
increases if open edges between these two points are closed.
So, the effective resistance between $R_{1}^{\mathrm{pre}}$ and $R_0^{\mathrm{pre}}$
is bounded above by the effective resistance of the longest self-avoiding open path
connecting these two points.
This path has length at most $2L_1$
and thus, by the series law, resistance of at most $2L_1$.
Therefore, $C_{1}^{-1}$ has a finite exponential moment of order $\vartheta'/2$.

The proof of the statements concerning the cycle $\theta^{R_{-1}^{\mathrm{pre}}} \omega_0$
can be accomplished analogously, but requires revisiting the proof of Lemma 3.3 in \cite{Gantert+al:2018}.
We omit further details.
\end{proof}

We close this section with the definition of the backbone. We call a vertex $v$ forwards-communicating (in $\omega$)
if it is connected to $+\infty$ via an infinite open path that does not visit any vertex $u$ with $\x(u)<\x(v)$.
Finally, we define $\back \defeq \back(\omega) \defeq \{v \in V: v \text{ is forwards-communicating (in } \omega \text{)}\}$.

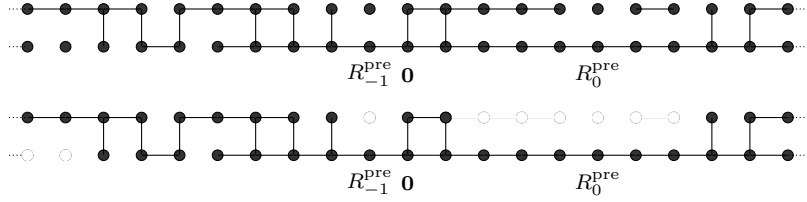
\begin{figure}[h]
\begin{center}
\pgfmathsetseed{216642}
\begin{tikzpicture}[thin, scale=0.5,-,
                   shorten >=0pt+0.5*\pgflinewidth,
                   shorten <=0pt+0.5*\pgflinewidth,
                   every node/.style={circle,
                                      draw,
                                      fill          = black!80,
                                      inner sep     = 0pt,
                                      minimum width =4 pt}]

\def \p {0.5}

\foreach \x in {-10,-9,-8,-7,-6,-5,-4,-3,-2,-1,0,1,2,3,4,5,6,7,8,9,10}
\foreach \y in {0,1}
    \node at (\x,\y) {};

\foreach \x in {-10,-9,-8,-7,-6,-5,-4,-3,-2,-1,0,1,2,3,4,5,6,7,8,9}{
\foreach \y in {1,0}{
    \pgfmathparse{rnd}
    \let\dummynum=\pgfmathresult
    \ifdim\pgfmathresult pt < \p pt\relax \draw (\x,\y) -- (\x+1,\y);\fi
  }}

\foreach \x in {-10,-9,-8,-7,-6,-5,-4,-3,-2,-1,0,1,2,3,4,5,6,7,8,9,10}{
\foreach \y in {0}{
    \pgfmathparse{rnd}
    \let\dummynum=\pgfmathresult
    \ifdim\pgfmathresult pt < \p pt\relax \draw (\x,\y) -- (\x,\y+1);\fi
  }}

	\draw	(-9,1) -- (-8,1);
	\draw	(-7,0) -- (-6,0);
	\draw	(-1,0) -- (0,0);
	\draw	(3,0) -- (4,0);
	\draw	(8,0) -- (9,0);

	\node[draw=none,fill=none] at (-1,-0.75) {$R^{\mathrm{pre}}_{-1}$};
	\node[draw=none,fill=none] at (0,-0.75) {$\mathbf{0}$};
	\node[draw=none,fill=none] at (5,-0.75) {$R^{\mathrm{pre}}_{0}$};

	\draw[densely dotted] (-10.5,0) -- (-10,0);
	\draw[densely dotted] (-10.5,1) -- (-10,1);
	\draw[densely dotted] (10,0) -- (10.5,0);
	\draw[densely dotted] (10,1) -- (10.5,1);
\end{tikzpicture}\smallskip

\pgfmathsetseed{216642}
\begin{tikzpicture}[thin, scale=0.5,-,
                   shorten >=0pt+0.5*\pgflinewidth,
                   shorten <=0pt+0.5*\pgflinewidth,
                   every node/.style={circle,
                                      draw,
                                      fill          = black!80,
                                      inner sep     = 0pt,
                                      minimum width =4 pt}]

\def \p {0.5}

\foreach \x in {-10,-9,-8,-7,-6,-5,-4,-3,-2,-1,0,1,2,3,4,5,6,7,8,9,10}
\foreach \y in {0,1}
    \node at (\x,\y) {};

\foreach \x in {-10,-9,-8,-7,-6,-5,-4,-3,-2,-1,0,1,2,3,4,5,6,7,8,9}{
\foreach \y in {1,0}{
    \pgfmathparse{rnd}
    \let\dummynum=\pgfmathresult
    \ifdim\pgfmathresult pt < \p pt\relax \draw (\x,\y) -- (\x+1,\y);\fi
  }}

\foreach \x in {-10,-9,-8,-7,-6,-5,-4,-3,-2,-1,0,1,2,3,4,5,6,7,8,9,10}{
\foreach \y in {0}{
    \pgfmathparse{rnd}
    \let\dummynum=\pgfmathresult
    \ifdim\pgfmathresult pt < \p pt\relax \draw (\x,\y) -- (\x,\y+1);\fi
  }}

	\draw	(-9,1) -- (-8,1);
	\draw	(-7,0) -- (-6,0);
	\draw	(-1,0) -- (0,0);
	\draw	(3,0) -- (4,0);
	\draw	(8,0) -- (9,0);

	\node[draw=none,fill=none] at (-1,-0.75) {$R^{\mathrm{pre}}_{-1}$};
	\node[draw=none,fill=none] at (0,-0.75) {$\mathbf{0}$};
	\node[draw=none,fill=none] at (5,-0.75) {$R^{\mathrm{pre}}_{0}$};

	\draw[densely dotted] (-10.5,0) -- (-10,0);
	\draw[densely dotted] (-10.5,1) -- (-10,1);
	\draw[densely dotted] (10,0) -- (10.5,0);
	\draw[densely dotted] (10,1) -- (10.5,1);

	\draw[white]	(1,1) -- (4,1);
	\draw[white]	(6,1) -- (7,1);
	\node at (1,1) {};
	\node[white] at (-9,0) {};
	\node[white] at (-10,0) {};

\foreach \x in {-1,2,3,4,5,6,7}
\foreach \y in {1}
    \node[white] at (\x,\y) {};
\end{tikzpicture}
\end{center}
\caption{The original percolation configuration and the backbone}
\label{Fig:backbone}
\end{figure}

\section{The environment seen from the walker and input from ergodic theory}

We define the \emph{process  of the environment seen from the particle}
$(\overline{\omega}(n))_{n \in \N_0}$ by $\overline{\omega}(n) \defeq \theta^{Y_n} \omega$, $n \in \N_0$.
It can be understood as a process under $P_{\omega}$ as well as under $\Prob_0$.
For later use, we shall show that $(\overline{\omega}(n))_{n \geq 0}$ is a reversible, ergodic Markov chain under $\Prob_0$.

\begin{lemma}	\label{Lem:environment seen from the particle}
The sequence $(\overline{\omega}(n))_{n \in \N_0}$ is a Markov process with state space $\Omega$
under $P_{\omega}$, $\Prob^*_0$ and $\Prob_0$, with initial distributions
$\delta_{\omega}$, $\Prmp^*$ and $\Prmp$, respectively.
In each of these cases, the transition kernel $M(\omega,\domega')$ is given by
\begin{equation}	\label{eq:M}
M(\omega,\!f)
=	E_{\omega}[f(\theta^{Y_1} \omega)]
=	\frac13 \!
\sum_{v \sim \mathbf{0}} \!\big(\1_{\{\omega(\mathbf{0},v)=1\}} f(\theta^v \omega) + \1_{\{\omega(\mathbf{0},v)=0\}} f(\omega)\big),
\end{equation}
$\omega \in \Omega$, $f$ nonnegative and $\F$-measurable.
Moreover,
$(\overline{\omega}(n))_{n \in \N_0}$ is reversible and ergodic under $\Prob_0$.
\end{lemma}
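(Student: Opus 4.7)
The plan is to verify the four assertions in turn---Markov property, the explicit form of the kernel $M$, reversibility, and ergodicity---since each step builds on the previous.

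For the Markov property, I would observe that the transition probabilities $p_{\omega}(Y_n,\cdot)$ from \eqref{eq:P_xi lazy} depend on $\omega$ only through the (at most three) edges incident to $Y_n$, i.\,e.\ only through the restriction of $\overline{\omega}(n)$ to the edges at $\bfnull$. Writing $\overline{\omega}(n+1)=\theta^{Y_{n+1}-Y_n}\overline{\omega}(n)$ and using the Markov property of $(Y_n)$ under $P_\omega$, the conditional law of $\overline{\omega}(n+1)$ given the history up to time $n$ becomes a function of $\overline{\omega}(n)$ alone; specializing to $n=0$ gives the formula for $M$ in \eqref{eq:M}. The identical argument applies under $\Prob^*_0$ and $\Prob_0$; the only change is the initial distribution, which is $\Prmp^*$ or $\Prmp$ rather than $\delta_\omega$.

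For reversibility under $\Prob_0$ my plan is to verify $\int f\cdot Mg\,d\Prmp = \int g\cdot Mf\,d\Prmp$ for bounded measurable $f,g$. The self-loop contributions in \eqref{eq:M} are manifestly symmetric in $f$ and $g$. For each neighbor $v$ of $\bfnull$, the jump contribution $\int f(\omega)\,\1_{\{\omega(\langle\bfnull,v\rangle)=1\}}\,g(\theta^v\omega)\,\Prmp(d\omega)$ is handled by unfolding the conditioning $\Prmp = \Prmp^*(\,\cdot\,|\,\Omega_\bfnull)$, substituting $\omega\mapsto\theta^{-v}\omega$, and invoking shift-invariance of $\Prmp^*$ (Lemma \ref{Lem:omega is ergodic}). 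On the event $\{\omega(\langle\bfnull,v\rangle)=1\}\cap\Omega_\bfnull$ the vertex $v$ lies in the infinite cluster, so after substitution one recovers $\{\omega(\langle\bfnull,-v\rangle)=1\}\cap\Omega_\bfnull$; summing over $v$ and using that the neighbor set of $\bfnull$ in the group $\Z\times\Z_2$ is closed under $v\mapsto -v$ produces the right-hand side with $f$ and $g$ swapped.

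The genuinely delicate step is ergodicity. I would show that any $A\in\F$ with $M\1_A=\1_A$ $\Prmp$-a.s.\ must satisfy $\Prmp(A)\in\{0,1\}$. Setting $h=\1_A$, the sequence $h(\overline{\omega}(n))$ is a bounded $P_\omega$-martingale and hence converges. Because the unbiased walk is recurrent on $\Cluster_\omega$ (Proposition \ref{Prop:SLLN}) and $\Cluster_\omega$ is connected, every $v\in\Cluster_\omega$ is hit $P_\omega$-a.s.\ for $\Prmp$-a.a.\ $\omega$; optional stopping at the hitting time of $v$ then gives $h(\omega)=h(\theta^v\omega)$ for every $v\in\Cluster_\omega$. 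Defining $\tilde A\defeq\{\omega\in\Omega^*:h(\theta^v\omega)=1\text{ for every }v\in\Cluster_\omega\}$, this set is shift-invariant under all $\theta^u$, $u\in V$, and $\tilde A\cap\Omega_\bfnull$ coincides with $A\cap\Omega_\bfnull$ up to a $\Prmp^*$-null set. Ergodicity of $\Prmp^*$ (Lemma \ref{Lem:omega is ergodic}) forces $\Prmp^*(\tilde A)\in\{0,1\}$, whence $\Prmp(A)\in\{0,1\}$.

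The main obstacle is this last step: promoting harmonicity of $h$ to invariance under shifts by arbitrary cluster points, and then packaging the result into an event of $\Omega^*$ invariant under the full group of shifts so that Lemma \ref{Lem:omega is ergodic} applies. Particular care is needed so that the $\Prmp$-null exceptional sets on which harmonicity or the cluster-shift identity might fail do not interfere with the definition of $\tilde A$ or with the identification $\tilde A\cap\Omega_\bfnull = A\cap\Omega_\bfnull$ modulo $\Prmp^*$-null sets.
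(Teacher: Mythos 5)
Your proposal is correct, and its overall architecture matches the paper's: the Markov property and the formula \eqref{eq:M} by the standard computation for the environment seen from the particle; reversibility via the shift-invariance identity $E_p[f(\theta^v \omega)\1_{\{\omega(\mathbf{0},v)=1\}}]=E_p[f(\omega)\1_{\{\omega(\mathbf{0},-v)=1\}}]$ in the style of Berger--Biskup; and ergodicity by reducing to the ergodicity of $\Prmp^*$ (Lemma \ref{Lem:omega is ergodic}) through a fully shift-invariant event built out of the invariant set. The one step where you take a genuinely different route is the ``delicate'' one you flag: to show that an invariant set $A$ is stable under shifts $\theta^v$, $v\in\Cluster_\omega$, you treat $\1_A(\overline{\omega}(n))$ as a bounded harmonic martingale and apply optional stopping at the hitting time of $v$, which requires recurrence of the unbiased walk. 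The paper instead reads the stability off directly from the explicit kernel \eqref{eq:M}: since $M(\omega,A)=1$ a.e.\ on $A$ and each open neighbor is reached in one step with probability $\tfrac13>0$, iteration along a finite open path gives $\theta^v\omega\in A$ for all $v\in\Cluster(\omega,\mathbf{0})$; this uses only irreducibility on the cluster, not recurrence. Your version buys a two-sided identity $h(\omega)=h(\theta^v\omega)$, which makes the identification $\tilde A\cap\Omega_{\mathbf{0}}=A\cap\Omega_{\mathbf{0}}$ modulo null sets immediate, whereas the paper only gets forward stability and must then invoke uniqueness of the infinite cluster to show $B\cap\Omega_{\mathbf{0}}\subseteq A$ up to a $\Prmp^*$-null set. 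Two small points: recurrence of the unbiased walk is due to Axelson-Fisk and H\"aggstr\"om (cited just before Proposition \ref{Prop:SLLN}), not to Proposition \ref{Prop:SLLN} itself, which only gives $X_n/n\to 0$; and the reduction of ergodic-theoretic invariance to the harmonicity of $\1_A$ (equivalently to the absorbing-set condition) should be justified, as the paper does by citing Rosenblatt.
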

\begin{proof}
The proof of \eqref{eq:M} is a standard calculation and can be done along the lines of the corresponding one for random walk in random environment,
see e.\,g.\ \cite[Lemma 2.1.18]{Zeitouni:2004}.
To prove reversibility of $(\overline{\omega}(n))_{n \in \N_0}$ under $\Prob_0$, notice that,
for every bounded and measurable $f: \Omega \to \R$ and all $v \in V$ with $v \sim \bfnull$,
\begin{equation}	\label{eq:shift invariance}
E_p [f(\theta^v \omega) \1_{\{\omega(\mathbf{0},v)=1\}}]
~=~	E_p[f(\omega) \1_{\{\omega(\mathbf{0},-v)=1\}}].
\end{equation}
This can be verified along the lines of the proof of (2.2) in \cite{Berger+Biskup:2007}.
Arguing as in the proof of Lemma 2.1 in \cite{Berger+Biskup:2007},
\eqref{eq:shift invariance} implies that
\begin{equation}	\label{eq:reversibility}
E_p [f(\omega) M\!g(\omega)]	~=~	E_p [g(\omega) M\!f(\omega)]
\end{equation}
for all measurable and bounded $f,g: \Omega \to \R$,
which is the reversibility of $(\overline{\omega}(n))_{n \in \N_0}$ under $\Prob_0$.
To prove ergodicity, we argue as in the proof of Lemma 4.3 in \cite{DeMasi+al:1989}.
Fix an invariant set $A' \subseteq \Omega$, i.\,e.,
\begin{equation*}
M(\omega,A')	~=~	1	\quad	\text{for } \Prmp\text{-almost all } \omega \in A'.
\end{equation*}
By Corollary 5 on p.\;97 in \cite{Rosenblatt:1971},
it is enough to show that $\Prmp(A') \in \{0,1\}$.
If $\Prmp(A') = 0$, there is nothing to show.
Thus assume that $\Prmp(A') > 0$.
Since $\Prmp$ is concentrated on $\Omega_{\mathbf{0}}$,
we can ignore the part of $A'$ that is outside $\Omega_{\mathbf{0}}$
and can thus assume $A' \subseteq \Omega_{\mathbf{0}}$.
From the form of $M$, we deduce that
\begin{equation*}
\text{for } \Prmp \text{-almost all } \omega \in A' \text{ we have that } \theta^v \omega \in A' \text{ for all } v \in \Cluster(\omega,\mathbf{0}).
\end{equation*}
To avoid trouble with exceptional sets of $\Prmp$-probability $0$,
we define $A \defeq \{\omega \in A': \theta^v \omega \in A' \text{ for all } v \in \Cluster(\omega,\mathbf{0})\}$.
Since $A \subseteq A'$, it suffices to show $\Prmp(A)=1$.
First notice that $\Prmp(A) = \Prmp(A') > 0$ and that
\begin{equation}	\label{eq:translation invariance of A}
\omega \in A \text{ and }  v \in \Cluster(\omega,\mathbf{0}) \text{ imply }\theta^v \omega \in A.
\end{equation}
Plainly,
\begin{equation*}
A \subseteq B \defeq \{\omega \in \Omega: \theta^v \omega \in A \text{ for some } v \in V\}.
\end{equation*}
By definition, $B$ is invariant under shifts $\theta^v$, $v \in V$.
Since $\Prmp^*(B) \geq \Prmp^*(A) > 0$, the ergodicity of $\Prmp^*$
(see Lemma \ref{Lem:omega is ergodic}) implies $\Prmp^*(B) = 1$.
We shall now show that $B \cap \Omega_{\mathbf{0}} \subseteq A$ up to a set of $\Prmp^*$ measure zero.
Once this is shown, we can conclude that $\Prmp^*(A) = \Prmp^*(\Omega_{\mathbf{0}})$,
in particular, $\Prmp(A)=1$.
In order to show $B \cap \Omega_{\mathbf{0}} \subseteq A$ $\Prmp^*$-a.\,s.,
pick an arbitrary $\omega \in B \cap \Omega_{\mathbf{0}}$ such that $\omega$ has only one infinite connected component $\Cluster_{\infty}$.
(The set of $\omega$ with this property has measure $1$ under $\Prmp^*$.)
By definition of the set $B$, there exists a $v \in V$ such that $\theta^v \omega \in A$.
Since $A \subseteq \Omega_{\mathbf{0}}$, the origin $\mathbf{0}$ must be in the infinite connected component of $\theta^v \omega$
or, equivalently, $v$ is in the infinite connected component of $\omega$.
From the uniqueness of the infinite connected component together with $\omega \in \Omega_{\mathbf{0}}$,
we thus infer $v \in \Cluster(\omega,\mathbf{0})$.
This is equivalent to $-v \in \Cluster(\theta^v \omega, \mathbf{0})$.
Together with $\theta^v \omega \in A$ this implies $\omega = \theta^{-v} \theta^v \omega \in A$
by means of \eqref{eq:translation invariance of A}.
The proof is complete.
\end{proof}

The lemma has the following useful corollary.

\begin{lemma}	\label{Lem:Birkhoff quenched}
Let $f: \Omega \to \R$ be integrable with respect to $\Prmp$. Then, for $\Prmp$-almost all $\omega \in \Omega$,
we have
\begin{equation}	\label{eq:Birkhoff quenched}
\lim_{n \to \infty} \frac1n \sum_{k=0}^{n-1} f(\overline{\omega}(k)) = \Ermp[f]	\quad	P_{\omega}\text{-a.\,s.}
\end{equation}
\end{lemma}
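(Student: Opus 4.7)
The plan is to reduce the quenched statement to the annealed Birkhoff theorem applied to the environment chain, using the ingredients already supplied by Lemma \ref{Lem:environment seen from the particle}.

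First I would observe that, by the previous lemma, under the annealed measure $\Prob_0$ the sequence $(\overline{\omega}(k))_{k \in \N_0}$ is a stationary ergodic Markov chain on $\Omega$: stationarity of $\Prmp$ for the transition kernel $M$ follows from reversibility (test \eqref{eq:reversibility} against $g \equiv 1$), and ergodicity of the chain was shown directly in the proof of that lemma. Consequently the shift on the path space of $(\overline{\omega}(k))_{k \in \N_0}$ is measure-preserving and ergodic under $\Prob_0$, so Birkhoff's ergodic theorem applied to the $\Prmp$-integrable functional $f$ yields
\begin{equation*}
\lim_{n \to \infty} \frac1n \sum_{k=0}^{n-1} f(\overline{\omega}(k)) ~=~ \Ermp[f]	\qquad	\Prob_0\text{-a.\,s.}
\end{equation*}

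Next I would disintegrate this annealed statement over the initial environment. Writing
\begin{equation*}
A ~\defeq~ \Big\{ (\omega,(y_k)_{k \geq 0}) : \tfrac1n \textstyle\sum_{k=0}^{n-1} f(\theta^{y_k}\omega) \to \Ermp[f] \Big\},
\end{equation*}
the Fubini-type identity $\Prob_0(A) = \int P_{\omega}(A^\omega)\,\Prmp(\domega)$ (where $A^\omega$ is the $\omega$-section) combined with $\Prob_0(A)=1$ and $P_{\omega}(A^\omega) \leq 1$ forces $P_{\omega}(A^\omega)=1$ for $\Prmp$-almost every $\omega$, which is exactly \eqref{eq:Birkhoff quenched}.

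There is essentially no serious obstacle; the only point requiring a little care is that ergodicity of the Markov chain in the sense provided by Lemma \ref{Lem:environment seen from the particle} (triviality of the invariant $\sigma$-field of $M$ under $\Prmp$) really does give ergodicity of the shift on the canonical path space under $\Prob_0$, so that Birkhoff applies to the (time-)shift-invariant functional $\lim_n \frac1n \sum_{k=0}^{n-1} f(\overline{\omega}(k))$. This is the standard bridge between Markov-chain ergodicity and ergodicity of the associated stationary sequence, and once it is in place the disintegration step is immediate.
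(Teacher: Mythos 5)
Your argument is exactly the paper's proof: apply Birkhoff's ergodic theorem to the reversible, ergodic environment chain under the annealed measure $\Prob_0$, identify the limit as $\Ermp[f]$, and then pass to the quenched statement by disintegrating $\Prob_0 = \Prmp \times P_\omega$ over the initial environment. The only difference is that you spell out the Fubini step explicitly, which the paper compresses into ``the definition of $\Prob_0$''; the content is identical.
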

\begin{proof}
As $(\overline{\omega}(n))_{n \in \N_0}$ is reversible and ergodic with respect to $\Prob_0$,
we infer
\begin{equation}	\label{eq:Birkhoff annealed}
\lim_{n \to \infty} \frac1n \sum_{k=0}^{n-1} f(\overline{\omega}(k)) = \E_0[f(\overline{\omega}(0))]	\quad	\Prob_0 \text{-a.\,s.}
\end{equation}
from Birkhoff's ergodic theorem.
As the law of $\overline{\omega}(0)$ under $\Prob_0$ is $\Prmp$, we have $\E_0[f(\overline{\omega}(0))] = \Ermp[f]$.
Hence \eqref{eq:Birkhoff quenched} follows from \eqref{eq:Birkhoff annealed} and the definition of $\Prob_0$.
\end{proof}

Next we see that if the walker sees the same environment at two different epochs,
then, with probability $1$, the position of the walker at those two epochs is actually the same.
This allows to reconstruct the random walk from the environment seen from the walker.

\begin{lemma}	\label{Lem:y_n<->omegabar(n)}
We have $\Prmp^*(\theta^v \omega = \theta^{w} \omega) = 0$
for all $v, w \in V$, $v \not = w$.
The same statement holds with $\Prmp^*$ replaced by $\Prmp$.
\end{lemma}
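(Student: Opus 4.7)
The plan is to reduce everything to showing $\Prmp^*(A_v) = 0$ for every $v \in V \setminus \{\bfnull\}$, where $A_v \defeq \{\omega \in \Omega : \theta^v \omega = \omega\}$. Indeed, $\theta^v \omega = \theta^w \omega$ is equivalent to $\theta^{v-w} \omega = \omega$, and the corresponding claim for $\Prmp$ follows at once from $\Prmp(\cdot) = \Prmp^*(\cdot \mid \Omega_{\bfnull})$ together with $\Prmp^*(\Omega_{\bfnull}) > 0$ (itself a consequence of translation invariance of $\Prmp^*$ and the almost sure presence of the infinite cluster). Because shifts commute, $A_v$ is invariant under every $\theta^u$, so Lemma \ref{Lem:omega is ergodic} yields $\Prmp^*(A_v) \in \{0,1\}$; it therefore suffices to rule out the value $1$, equivalently, to show $\Prmp(A_v) < 1$.

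Write $v = (a, b) \in \Z \times \{0, 1\}$ with $v \neq \bfnull$. The case $v = (0, 1)$ is a direct contradiction: $\theta^v$ is the row swap, so under invariance $(i, 0)$ is isolated in $\omega$ if and only if $(i, 1)$ is; but at any pre-regeneration point $(i, 0)$ the vertex $(i, 1)$ is isolated while $(i, 0)$ lies on the backbone and is therefore not, contradicting the $\Prmp^*$-a.s.\ existence of pre-regeneration points. When $a \neq 0$, we use the inclusion $A_v \subseteq A_{2v}$ with $2v = (2a, 0)$ to reduce to the case $b = 0$, so we may assume $\theta^v$ is a pure horizontal shift by some integer $k > 0$. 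Since the set of $\x$-coordinates of the pre-regeneration points is intrinsic to $\omega$, it becomes a $k$-periodic subset of $\Z$; consequently there is an integer $j \geq 1$ with $\x(R^{\mathrm{pre}}_{n+j}) = \x(R^{\mathrm{pre}}_n) + k$ for every $n$, and the $(n+j)$-th cycle is the horizontal $(k, 0)$-translate of the $n$-th cycle. Hence $(C_{n+j}, L_{n+j}) = (C_n, L_n)$ for all $n$.

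It remains to check that the event $\{(C_{n+j}, L_{n+j}) = (C_n, L_n)\ \text{for all}\ n \geq 1\}$ is $\Prmp$-null. By Lemma \ref{Lem:effective resistances}, $\{(C_n, L_n) : n \geq 1\}$ is i.i.d.\ with a common law $\mu$ that is not a point mass (for instance, both $L_n = 1$ and $L_n \geq 2$ occur with positive probability, corresponding respectively to directly adjacent pre-regeneration points and to cycles containing a trap). Splitting into the $j$ residue classes modulo $j$, within each class the event forces infinitely many i.i.d.\ samples from $\mu$ to coincide, which has probability zero since $\mu$ is not a point mass. Hence $\Prmp(A_v) = 0$, as required. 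The step I expect to require the most care is the passage from shift-periodicity of $\omega$ to periodicity of the abstract cycle data $(C_n, L_n)_n$: one must verify that pre-regeneration points, cycle lengths and effective conductances are all intrinsic to $\omega$ and transform equivariantly under horizontal translation, which follows from the definitions recalled in Section \ref{sec:background}.
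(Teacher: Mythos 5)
Your proof is correct, but it takes a genuinely different route from the paper's. The paper reduces to $w=\bfnull$ and then encodes the configuration by the process ${\tt T}=({\tt T}_i)_{i\in\Z}$ recording which of $(i,0)$, $(i,1)$ are backwards-communicating; by Theorem 3.1 of Axelson-Fisk and H\"aggstr\"om this is a stationary, irreducible, aperiodic Markov chain on the three states $\{{\tt 01},{\tt 10},{\tt 11}\}$, and since $\theta^v\omega=\omega$ forces $\theta^v{\tt T}={\tt T}$, the conclusion follows from the fact that such a chain is almost surely not periodic (and, for the row swap, never identically ${\tt 11}$). You instead combine the ergodicity of $\Prmp^*$ (to get the zero--one dichotomy for the shift-invariant event $A_v$, and to transfer between $\Prmp^*$ and $\Prmp$) with the cyclic decomposition: shift-invariance forces the pre-regeneration set to be periodic and hence the i.i.d.\ cycle data $(C_n,L_n)_{n\ge1}$ of Lemma \ref{Lem:effective resistances} to be periodic, which is a null event because the common law is not a point mass. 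Your case analysis is complete (the row swap is excluded directly at a pre-regeneration point, and $A_v\subseteq A_{2v}$ reduces everything else to a pure horizontal shift), and the handling of the $\omega$-dependent period $j\in\{1,\dots,k\}$ via a finite union is fine. What your route buys is self-containedness: it uses only lemmas already established in the paper rather than the external Markov-chain representation. What it costs is one small unproved ingredient, namely that the law of $L_1$ under $\Prmp$ is non-degenerate; this is true and easy (e.g.\ both the local configuration with $(i,1)$ and $(i+1,1)$ isolated and the one with $(i,1)$, $(i+2,1)$ isolated but $(i+1,1)$ not have positive probability under the conditioned measure), but since the paper nowhere records it, you should add a sentence justifying it rather than relegating it to a parenthesis.
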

\begin{proof}
By shift invariance, we may assume $w=\mathbf{0}$ and $v \not= \mathbf{0}$.
Call a vertex $u \in V$ backwards-communicating (in $\omega$)
if there exists an infinite open path in $\omega$ which contains $u$ but no vertex
with strictly larger $\x$-coordinate.
Define ${\tt T} = ({\tt T}_i)_{i \in \Z}$ by letting
\begin{equation*}
{\tt T}_i	~\defeq~	\begin{cases}
			{\tt 00}	&	\text{if neither $(i,0)$ nor $(i,1)$ are backwards-communicating;}	\\
			{\tt 01}	&	\text{if $(i,0)$ is not backwards-communicating but $(i,1)$ is;}	\\
			{\tt 10}	&	\text{if $(i,0)$ is backwards-communicating but $(i,1)$ is not;}	\\
			{\tt 11}	&	\text{if $(i,0)$ and $(i,1)$ are backwards-communicating.}
			\end{cases}
\end{equation*}
Notice that $\theta^v \omega = \omega$ implies $\theta^v {\tt T} = {\tt T}$ where $\theta^v {\tt T}_i$ records
whether the vertices $(i,0)$ and $(i,1)$ are backwards-communicating in $\theta^v \omega$.
Under $\Prmp^*$, ${\tt T}$ is a time-homogeneous, stationary, irreducible and aperiodic Markov chain
with state space $\{{\tt 01}, {\tt 10}, {\tt 11}\}$
by \cite[Theorem 3.1 and the form of the transition matrix on p.\;1111]{Axelson-Fisk+H"aggstr"om:2009}.
From this one can deduce $\Prmp^*(\theta^v {\tt T} = {\tt T}) = 0$ and, in particular, $\Prmp^*(\theta^v \omega = \omega) = 0$.
Finally, notice that, for every event $A \in \F$, $\Prmp^*(A) = \Prmp(A)$ whenever $\Prmp^*(A) \in \{0,1\}$.
\end{proof}

\begin{lemma}	\label{Lem:ergodic theory input}
The increment sequences $(X_n-X_{n-1})_{n \in \N}$ and $(Y_n-Y_{n-1})_{n \in \N}$ are ergodic under $\Prob$.
\end{lemma}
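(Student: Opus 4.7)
The plan is to realize each increment $Y_n - Y_{n-1}$ as a measurable function of the two consecutive environments $\overline{\omega}(n-1)$, $\overline{\omega}(n)$ seen from the walker, and then transfer the ergodicity of the Markov chain $(\overline{\omega}(n))_{n \in \N_0}$ from Lemma \ref{Lem:environment seen from the particle} to the increment sequence via that factor map.

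First I would record the identity
\begin{equation*}
\overline{\omega}(n) \;=\; \theta^{Y_n}\omega \;=\; \theta^{Y_n - Y_{n-1}}\bigl(\theta^{Y_{n-1}}\omega\bigr) \;=\; \theta^{Y_n-Y_{n-1}}\overline{\omega}(n-1).
\end{equation*}
By Lemma \ref{Lem:y_n<->omegabar(n)} applied to $\overline{\omega}(n-1)$, the shift vector $v \in V$ satisfying $\theta^v \overline{\omega}(n-1) = \overline{\omega}(n)$ is almost surely unique. Consequently there exists a measurable function $g \colon \Omega \times \Omega \to V$ (extended arbitrarily on the $\Prob$-null set where uniqueness fails) such that $Y_n - Y_{n-1} = g(\overline{\omega}(n-1),\overline{\omega}(n))$ holds $\Prob$-almost surely for every $n \in \N$.

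Next I would introduce the factor map $\Phi \colon \Omega^{\N_0} \to V^{\N}$ defined coordinate-wise by $\Phi((\omega_k)_{k \geq 0})_n \defeq g(\omega_{n-1},\omega_n)$ for $n \in \N$. Letting $\sigma$ and $T$ be the canonical shifts on $\Omega^{\N_0}$ and $V^{\N}$, the construction yields the intertwining relation $\Phi \circ \sigma = T \circ \Phi$. Lemma \ref{Lem:environment seen from the particle} asserts that $(\overline{\omega}(n))_{n \in \N_0}$ is stationary and ergodic under $\Prob$, and since ergodicity is preserved under shift-equivariant measurable factor maps, the push-forward sequence $(Y_n-Y_{n-1})_{n \in \N}$ is stationary and ergodic under $T$. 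Composing $\Phi$ further with the coordinate projection $V \to \Z$ delivers the corresponding conclusion for $(X_n-X_{n-1})_{n \in \N}$.

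The only delicate step is the appeal to Lemma \ref{Lem:y_n<->omegabar(n)}: without the almost-sure distinguishability of distinct shifts of a typical environment, the increment $Y_n - Y_{n-1}$ would fail to be a genuine function of the two successive environments and the whole factor-map reduction would collapse. Everything else is routine manipulation of stationary ergodic sequences.
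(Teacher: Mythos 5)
Your proposal is correct and follows essentially the same route as the paper: the paper likewise defines a measurable map $\varphi(\omega,\omega')=v$ when $\omega'=\theta^v\omega$ for a unique $v$ (uniqueness $\Prob$-a.s.\ by Lemma \ref{Lem:y_n<->omegabar(n)}), writes $Y_n-Y_{n-1}=\varphi(\overline{\omega}(n-1),\overline{\omega}(n))$, and transfers ergodicity of $(\overline{\omega}(n))_{n\in\N_0}$ through this sliding-block factor map, projecting to the $\x$-coordinate for $(X_n-X_{n-1})_{n\in\N}$. The only cosmetic difference is that the paper cites Lemma 5.6(c) of Axelson-Fisk and H\"aggstr\"om for the preservation of ergodicity under such factor maps and remarks on extending the environment chain to a two-sided stationary sequence to apply it formally.
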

\begin{proof}
Define a mapping $\varphi: \Omega \times \Omega \to V$ via
\begin{equation*}
\varphi(\omega,\omega') =
\begin{cases}
v			&	\text{if }	\omega' = \theta^v \omega \text{ for a unique } v \in V;	\\
\mathbf{0}		&	\text{otherwise.}
\end{cases}
\end{equation*}
It can be checked that $\varphi$ is product-measurable.
Further, $\Prob$-a.\,s.,
$Y_n - Y_{n-1} = \varphi(\overline{\omega}(n\!-\!1),\overline{\omega}(n))$
for all $n \in \N$.
Combining the ergodicity of $(\overline{\omega}(n))_{n \in \N_0}$
under~$\Prob_0$ (see Lemma \ref{Lem:environment seen from the particle})
with Lemma 5.6(c) in \cite{Axelson-Fisk+H"aggstr"om:2009},
we infer that $(Y_n-Y_{n-1})_{n \in \N} = (\varphi(\overline{\omega}(n\!-\!1),\overline{\omega}(n)))_{n \in \N}$
is ergodic.
(To formally apply the lemma, one may extend $(\overline{\omega}(n))_{n \in \N_0}$
to a stationary ergodic sequence $(\overline{\omega}(n))_{n \in \Z}$ using reversibility.)
Then also $(X_n-X_{n-1})_{n \in \N} = (\x(Y_n-Y_{n-1}))_{n \in \N}$ is ergodic under $\Prob$
again by Lemma 5.6(c) in \cite{Axelson-Fisk+H"aggstr"om:2009}.
\end{proof}

\section{Preliminary results}	\label{sec:preliminaries}

\paragraph{Hitting probabilities.}

The next lemma provides bounds on hitting probabilities for biased random walk
that we use later on.

\begin{lemma}	\label{Lem:hitting probabilities}
Let $L,R \in \N$ and $u,v,w \in \Rpre$ be such that $\x(v)-\x(u)=L\lfloor \frac1\lambda \rfloor$
and $\x(w)-\x(v)=R \lfloor \frac1\lambda \rfloor$. Then,
with $T_u$ and $T_w$ denoting the first hitting times of $(Y_n)_{n \in \N_0}$ at $u$ and $w$, respectively,
we have
\begin{equation}	\label{eq:comparewalks:1}
\frac{1-e^{-R}}{1-e^{-R}+  6 (e^{2L}-1)} \leq P_{\omega,\lambda}^{v}(T_{u}< T_{w}) \leq  \frac{1-e^{-2R}}{1-e^{-2R}+ \frac15 (e^{L}-1)}
\end{equation}
for all sufficiently small $\lambda \in (0,\lambda_0]$ for some $\lambda_0 > 0$ not depending on $L,R$.
In particular, for these $\lambda$,
\begin{equation}	\label{eq:comparewalks:2}
\frac1{6e^{2L}-5}\leq P_{\omega,\lambda}^{v}(T_{u} < \infty) \leq \frac{5}{4 + e^{L}}.
\end{equation}
Moreover, for $L=1$ and $R=\infty$, for all sufficiently small $\lambda > 0$, we have
\begin{equation}	\label{eq:comparewalks:3}
P_{\omega,\lambda}^{v}(T_{u} < \infty) \leq \frac4{10}.
\end{equation}
\end{lemma}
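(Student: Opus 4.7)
Writing $c_\lambda(\langle a,b\rangle)\propto e^{\lambda(\x(a)+\x(b))}$ for the conductance of an open edge, the biased walk on $\Cluster$ has the same hitting probabilities as its jump chain, which is a reversible random walk on the network with conductances $c_\lambda$ (self-loops do not affect hitting probabilities). At every pre-regeneration point $(i,0)$ the vertex $(i,1)$ is isolated, so the only open edge crossing the level $\{\x=i-\tfrac12\}$ in $\Cluster$ is $\langle (i-1,0),(i,0)\rangle$; hence $(i,0)$ is a cut vertex of $\Cluster$. In particular $u,v,w$ are successive cut vertices, the reduced three-vertex network on $\{u,v,w\}$ collapses to the series path $u\text{--}v\text{--}w$, and solving the harmonic equation at $v$ gives the identity
\begin{equation*}
P^{v}_{\omega,\lambda}(T_u<T_w) \;=\; \frac{\Res(v,w)}{\Res(u,v)+\Res(v,w)}.
\end{equation*}

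\textbf{Two-sided bounds on the resistances.} I bound $\Res(u,v)$ and $\Res(v,w)$ above and below by standard means. For lower bounds on $\Res$, I apply the Nash--Williams inequality to the cuts $\Pi_k$ of open horizontal edges between $\x$-coordinates $k$ and $k+1$: on the ladder graph, $\Pi_k$ contains at most two edges, each of conductance $e^{\lambda(2k+1)}$, so
\begin{equation*}
\Res(v,w) \;\geq\; \tfrac{1}{2}\sum_{k=0}^{R\lfloor 1/\lambda\rfloor-1} e^{-\lambda(2k+1)} \;=\; \tfrac{e^{-\lambda}}{2}\cdot\tfrac{1-e^{-2\lambda R\lfloor 1/\lambda\rfloor}}{1-e^{-2\lambda}},
\end{equation*}
which is of order $(1-e^{-2R})/\lambda$; analogously $\Res(u,v)\geq (\text{const})\cdot(e^{2L}-1)/\lambda$. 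For matching upper bounds I fix any open self-avoiding path in $\Cluster$ joining the two endpoints (such a path exists by connectivity of the cluster together with the cut-vertex property) and apply the series law. On the ladder graph such a path has at most $2\lfloor \x(w)-\x(v)\rfloor+O(1)$ edges, each of conductance at least the smallest $c_\lambda$-value along it, so the series law delivers an upper bound of the same order of magnitude as the Nash--Williams lower bound. Substituting the four resulting estimates into the identity above yields \eqref{eq:comparewalks:1}; the explicit constants $6$ and $\tfrac15$ come from careful accounting of the Nash--Williams prefactor $2$, the factor $e^{-\lambda}$, and the approximation $1-e^{-2\lambda}\approx 2\lambda$, valid for $\lambda\in(0,\lambda_0]$ with $\lambda_0$ independent of $L$ and $R$.

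\textbf{Remaining bounds; main obstacle.} Inequality \eqref{eq:comparewalks:2} follows from \eqref{eq:comparewalks:1} by letting $R\to\infty$ along a sequence of pre-regeneration points $w_R$ with $\x(w_R)\to\infty$, using monotone convergence on $\{T_u<T_{w_R}\}\uparrow\{T_u<\infty\}$. The sharper bound \eqref{eq:comparewalks:3} is not contained in \eqref{eq:comparewalks:2} with $L=1$ (which yields only $5/(4+e)\approx 0.74$); for it I would condition on the first transition out of $v$ and apply \eqref{eq:comparewalks:2} from the resulting new position, using that the jump chain moves right with probability close to $1/3$ and left with probability close to $1/3$ when $\lambda$ is small, so that only a small fraction of trajectories begin by moving left. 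The principal technical obstacle is keeping the explicit constants $6$ and $\tfrac15$ under control; this requires the open path chosen for the upper bound on $\Res$ to have length comparable to the direct $\x$-distance, which in turn uses crucially that each $\x$-slab of the ladder graph has only two vertices.
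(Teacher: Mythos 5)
Your treatment of \eqref{eq:comparewalks:1} and \eqref{eq:comparewalks:2} follows essentially the same route as the paper: the cut-vertex property of pre-regeneration points reduces the hitting probability to the series formula $\Res(v\leftrightarrow w)/(\Res(u\leftrightarrow v)+\Res(v\leftrightarrow w))$, the lower bounds on the resistances come from Nash--Williams applied to the two-edge column cuts, the upper bounds come from the resistance of a single self-avoiding path (the paper phrases this as Rayleigh monotonicity on a modified graph $G$, but it is the same estimate), and \eqref{eq:comparewalks:2} is obtained by letting $R\to\infty$. You do not actually carry out the constant chase that produces $6$ and $\tfrac15$, which is the substance of the lemma, but the scheme would deliver constants of this form.

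The genuine gap is in \eqref{eq:comparewalks:3}. You correctly observe that \eqref{eq:comparewalks:2} with $L=1$ only gives $5/(4+e)\approx 0.74$, but your proposed repair --- conditioning on the first step and invoking \eqref{eq:comparewalks:2} from the new position --- cannot close this gap. First, a leftward first step has probability close to $\tfrac13$, which is not ``a small fraction,'' and a trajectory whose first step is rightward or lazy still hits $u$ with essentially the same probability as before (it must merely return through $v$ first); a one-step decomposition therefore improves the bound by at most an $O(1)$ amount tied to the return probability to $v$, which is nowhere near small enough. Second, and more fundamentally, the slack between $0.74$ and $0.4$ does not sit in a prefactor but in the exponent: the weakening $e^{2(1-\lambda)L}-1\geq e^{L}-1$ used to get the clean form of \eqref{eq:comparewalks:1} throws away a factor of $2$ in the exponent of the Nash--Williams estimate for $\Res(u\leftrightarrow v)$. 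The paper's proof of \eqref{eq:comparewalks:3} does not use \eqref{eq:comparewalks:1} as a black box; it returns to the sharper intermediate bound
\begin{equation*}
P_{\omega,\lambda}^{v}(T_{u}< T_{w})
\leq	\frac{1-e^{-2R}}{1-e^{-2R}+ \frac12 \big(e^{2(1-\lambda)L}-1\big) \frac{1-e^{-\lambda}}{e^{2\lambda}-1}},
\end{equation*}
sets $R=\infty$, $L=1$, and observes that as $\lambda\to 0$ this tends to $4/(3+e^{2})=0.385\ldots<\tfrac{4}{10}$. To fix your argument you should retain this intermediate inequality (which your resistance estimates already yield before the final weakening) and evaluate its limit, rather than attempting a first-step analysis.
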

\begin{proof}
Since $u,w$ are pre-regeneration points,
it suffices to consider the finite subgraph $[u,w)$.
As $v$ is also a pre-regeneration point,
standard electrical network theory gives
\begin{equation*}
P_{\omega,\lambda}^{v}(T_{u}< T_{w}) = \frac{\Res(v \leftrightarrow w)}{\Res(u \leftrightarrow v)+\Res(v \leftrightarrow w)}
\end{equation*}
where $\Res(u \leftrightarrow v) = \mathcal{R}_{\text{eff},\lambda}(u \leftrightarrow v)$ denotes the effective resistance between $u$ and $v$ in $[u,w)$,
see \cite[Section 9.4]{Levin+Peres+Wilmer:2009}.
Let us first prove the upper bound by applying Raleigh's monotonicity law \cite[Theorem 9.12]{Levin+Peres+Wilmer:2009}.
We add all edges left of $v$ that were not present in the cluster $\omega$.
This decreases the effective resistance $\Res(u \leftrightarrow v)$ between $u$ and $v$.
On the right of $v$ we delete all edges but a simple path from $v$ to $w$.
This increases the effective resistance $\Res(v \leftrightarrow w)$. 
\begin{figure}[htp]
\begin{center}
\begin{minipage}{10cm}
\begin{center}
\begin{tikzpicture}[thin, scale=0.5,-,
                   shorten >=0pt+0.5*\pgflinewidth,
                   shorten <=0pt+0.5*\pgflinewidth,
                   every node/.style={circle,
                                      draw,
                                      fill          = black!80,
                                      inner sep     = 0pt,
                                      minimum width =4 pt}]
\path[draw] 
       node at (0,0) {}  
       node at (0,1) {} 
       node at (1,0) {} 
       node at (1,1) {} 
       node at (2,0) {} 
       node at (2,1) {} 
       node at (3,0) {} 
       node at (3,1) {}
       node at (4,0) {} 
       node at (4,1) {}
       node at (5,0) {} 
       node at (5,1) {} 
       node at (-1,0){}
       node at (-1,1) {} 
       node at (-2,0) {} 
       node at (-2,1) {} 
       node at (-3,0) {} 
       node at (-3,1) {} 
        ; 

	\draw (-3,0) -- (-2,0) ;
	\draw (-2,0) -- (-2,1) ;
	\draw (-2,1) -- (-1,1) ;
	\draw (-1,1) -- (-1,0) ;
	\draw (-1,0) -- (0,0) ;
	\draw (0,1) -- (0,0) ;
	\draw (0,0) -- (2,0) ;
	\draw (2,0) -- (3,0) ;
	\draw (2,0) -- (2,1) ;
	\draw (2,0) -- (3,0) ;
	\draw (0,0) -- (3,0) ;
	\draw (2,1) -- (4,1) ;
	\draw (4,0) -- (5,0) ;
	\draw (4,0) -- (4,1)  ;


\node[draw=none,fill=none] at (-3,-0.4) {$u$};
\node[draw=none,fill=none] at (1,-0.4) {$v$};
\node[draw=none,fill=none] at (5,-0.4) {$w$};
\node[draw=none,fill=none,right] at (6,0.5) {$[u,w)$};
\end{tikzpicture}
\end{center}
\end{minipage}
\hfill
\begin{minipage}{10cm}
\begin{center}
\begin{tikzpicture}[thin, scale=0.5,-,
                   shorten >=0pt+0.5*\pgflinewidth,
                   shorten <=0pt+0.5*\pgflinewidth,
                   every node/.style={circle,
                                      draw,
                                      fill          = black!80,
                                      inner sep     = 0pt,
                                      minimum width =4 pt}]
\path[draw] 
       node at (0,0) {}  
       node at (0,1) {} 
       node at (1,0) {} 
       node at (1,1) {} 
       node at (2,0) {} 
       node at (2,1) {} 
       node at (3,0) {} 
       node at (3,1) {}
       node at (4,0) {} 
       node at (4,1) {}
       node at (5,0) {} 
       node at (5,1) {} 
       node at (-1,0){}
       node at (-1,1) {} 
       node at (-2,0) {} 
       node at (-2,1) {} 
       node at (-3,0) {} 
       node at (-3,1) {} 
        ; 

	\draw (-3,0) -- (-3,1) ;
	\draw (-3,0) -- (1,0) ;
	\draw (-3,1) -- (1,1) ;
	\draw (-2,0) -- (-2,1) ;
	\draw (-2,1) -- (-1,1) ;
	\draw (-1,1) -- (-1,0) ;
	\draw (-1,0) -- (0,0) ;
	\draw (0,1) -- (0,0) ;
	\draw (0,1) -- (1,1) ;
	\draw (1,0) -- (1,1) ;
	\draw (0,0) -- (2,0) ;
	\draw (2,0) -- (2,1) ;
	\draw (2,1) -- (4,1) ;
	\draw (4,0) -- (5,0) ;
	\draw (4,0) -- (4,1)  ;


\node[draw=none,fill=none] at (-3,-0.4) {$u$};
\node[draw=none,fill=none] at (1,-0.4) {$v$};
\node[draw=none,fill=none] at (5,-0.4) {$w$};
\node[draw=none,fill=none,right] at (6,0.5) {$\hphantom{[u,w)}$};
\node[draw=none,fill=none,right] at (6,0.5) {$G$};
\end{tikzpicture}
\end{center}
\end{minipage}
\end{center}
\caption{Construction of the graph $G$.}
\label{fig:graphG}
\end{figure}
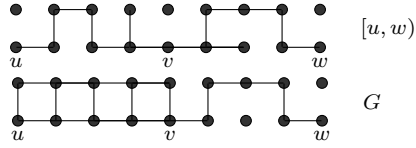
The graph obtained is denoted by $G$, see Figure \ref{fig:graphG} for an example.
We conclude
\begin{equation*}
P_{\omega,\lambda}^{v}(T_{u}< T_{w}) \leq
\frac{\mathcal{R}_{\mathrm{eff},G}(v \leftrightarrow w)}{\mathcal{R}_{\mathrm{eff},G}(u \leftrightarrow v)+\mathcal{R}_{\mathrm{eff},G}(v \leftrightarrow w)}
\end{equation*}
where $\mathcal{R}_{\mathrm{eff},G}$ denote the corresponding effective resistances in $G$.
We may assume without loss of generality that $v = \mathbf{0}$.
Then, by the series law, we can bound $\mathcal{R}_{\mathrm{eff},G}(v \leftrightarrow w)$ from above by
\begin{equation*}
\mathcal{R}_{\mathrm{eff},G}(v \leftrightarrow w) \leq {\textstyle \sum_{k=1}^{2R \lfloor \frac1\lambda \rfloor}} e^{-\lambda k}
= \frac{e^{-\lambda}}{1-e^{-\lambda}} (1-e^{-\lambda 2R \lfloor \frac1\lambda \rfloor})
\leq \frac{1-e^{-2R}}{1-e^{-\lambda}}.
\end{equation*}
From the Nash-Williams inequality \cite[Proposition 9.15]{Levin+Peres+Wilmer:2009}, we infer
\begin{equation*}
\mathcal{R}_{\mathrm{eff},G}(u \leftrightarrow v) \geq {\textstyle \sum_{k=1}^{L \lfloor \frac1\lambda \rfloor}} \big( 2 e^{-\lambda (2k-1)} \big)^{-1} 
= \frac{e^{\lambda}}2 \frac{e^{2\lambda L \lfloor \frac1\lambda \rfloor}-1}{e^{2\lambda}-1} 
\geq \frac{1}2 \frac{e^{2(1-\lambda)L}-1}{e^{2\lambda}-1}.
\end{equation*}
Consequently,
\begin{align}
P_{\omega,\lambda}^{v}(T_{u}< T_{w})
& \leq	\frac{1-e^{-2R}}{1-e^{-2R}+ \frac12 (e^{2(1-\lambda)L}-1) \frac{1-e^{-\lambda}}{e^{2\lambda}-1}}	\label{eq:uniform bound}	\\
&\leq		\frac{1-e^{-2R}}{1-e^{-2R}+ \frac15 (e^{L}-1)}	\notag
\end{align}
for all sufficiently small $\lambda > 0$ independent of $L,R$. 
The proof of the lower bound is similar.
We add all edges right of $v$ and keep only one simple path left of $v$.
For this new graph $\widetilde G$ we bound the effective resistances as follows. 
From the Nash-Williams inequality \cite[Proposition 9.15]{Levin+Peres+Wilmer:2009}, we infer
\begin{equation*}
\mathcal{R}_{\mathrm{eff},\widetilde G}(v \leftrightarrow w) \geq {\textstyle \sum_{k=1}^{R \lfloor \frac1\lambda \rfloor}} \big( 2 e^{\lambda (2k-1)} \big)^{-1}
= \frac{e^{-\lambda}}2 \frac{1-e^{-\lambda 2R \lfloor \frac1\lambda \rfloor}}{1-e^{-2\lambda}} 
\geq \frac15 \frac{1-e^{-R}}{1-e^{-\lambda}}
\end{equation*}
for all sufficiently small $\lambda > 0$. Moreover,
\begin{equation*}
\mathcal{R}_{\mathrm{eff},\widetilde G}(u \leftrightarrow v) \leq   {\textstyle \sum_{k=1}^{2L \lfloor \frac1\lambda \rfloor}} e^{\lambda k}
= \frac{e^{\lambda}}{e^{\lambda}-1} (e^{\lambda 2L \lfloor \frac1\lambda \rfloor}-1)
\leq \frac{e^{\lambda}}{e^{\lambda}-1} (e^{2L}-1).
\end{equation*}
The lower bound in \eqref{eq:comparewalks:1} now follows.
Equation \eqref{eq:comparewalks:2} follows from \eqref{eq:comparewalks:1} by letting $R \to \infty$.
Equation \eqref{eq:comparewalks:3} follows from \eqref{eq:uniform bound}
and the observation that the term on the right-hand side of \eqref{eq:uniform bound} with $R = \infty$ and $L=1$
tends to $\frac{4}{3+e^{2}}=0.3850\ldots$ for $\lambda \to 0$.
\end{proof}

\section{Harmonic functions and the corrector}	\label{sec:harmonic functions}

We use harmonic functions to construct a martingale approximation for $X_n$.
As a result, $X_n$ can be written as a martingale plus a corrector.

\paragraph{The corrector method.}
The corrector method has been used in various setups,
see e.g.\ \cite{Berger+Biskup:2007} and \cite{Mathieu+Piatnitski:2007}.
In the present setup, the method works as follows.

\noindent
We seek a function $\psi: \Omega \times V \to \R$ such that, for each fixed $\omega$,
$\psi(\omega,\cdot)$ is harmonic in the second argument with respect to
the transition kernel of $P_{\omega}$, that is,
$E^v_{\omega} [\psi(\omega,Y_1)] = \psi(\omega,v)$ for all $v \in V$.
In what follows, we shall sometimes suppress the dependence of $\psi$ on $\omega$ in the notation so that the above condition becomes
\begin{equation}	\label{eq:psi}
E^v_{\omega} [\psi(Y_1)] = \psi(v),	\quad	v \in V.
\end{equation}
If we find such a function, then $(\psi(Y_n))_{n \in \N_0}$ is a martingale under $P_{\omega}$.
We can then define $\chi(\omega,v) \defeq \x(v) - \psi(\omega,v)$ and get that
$X_n = \psi(Y_n) + \chi(Y_n)$. In other words,
$X_n$ can be written as the $n$th term in a martingale, $\psi(Y_n)$,
plus a \emph{corrector}, $\chi(Y_n)$.
In order to derive a central limit theorem for $X_n$,
it then suffices to apply the martingale central limit theorem to $\psi(Y_n)$
and to show that the contribution of $\chi(Y_n)$ is asymptotically negligible.

\paragraph{Construction of a harmonic function.}
Let $\omega \in \Omega_{\mathbf{0}}$ be such that there are infinitely many pre-regeneration points
to the left and to the right of the origin (the set of these $\omega$ has $\Prmp$-probability $1$).
Then, under $P_{\omega}$, the walk $(Y_n)_{n \geq 0}$
is the simple random walk on the unique infinite cluster $\Cluster_\omega$.
It can also be considered as a random walk among random conductances where
each edge $e \in E$ has conductance $\omega(e)$.
Recall that $C_{n}$ denotes the effective conductance between $R_{n-1}^{\mathrm{pre}}$
and $R_{n}^{\mathrm{pre}}$, see \eqref{eq:C_n and L_n}.
We couple our model with the random conductance model on $\Z$ with conductance $C_{n}$
between $n-1$ and $n$.
For the latter model, the harmonic functions are known.
In fact,
\begin{equation}	\label{eq:Psi harmonic RCM}
\Psi(n) \defeq
\begin{cases}
-\sum_{k=n+1}^{0} \frac{1}{C_{k}}			&	\text{for } n<0,	\\
0									&	\text{for } n=0,	\\
\sum_{k=1}^{n} \frac{1}{C_{k}}				&	\text{for } n>0,	\\
\end{cases}
\end{equation}
is harmonic for the random conductance model on $\Z$.
We define $\phi(R_{n}^{\mathrm{pre}}) \defeq \Psi(n)$.
Now fix an arbitrary $n \in \Z$.
For any vertex $v \in \omega_n$,
we then set $\phi(v) \defeq \phi(R_{n-1}^{\mathrm{pre}}) + \Res(R_{n-1}^{\mathrm{pre}} \leftrightarrow v)$,
where the latter expression denotes the effective resistance between $R_{n-1}^{\mathrm{pre}}$ and $v$ in the finite network $\omega_n$.
This definition is consistent with the cases $v = R_{n-1}^{\mathrm{pre}}, R_{n}^{\mathrm{pre}}$,
and, by \cite[Eq.\;(9.12)]{Levin+Peres+Wilmer:2009}, makes $\phi$ harmonic on $\omega_n$.
As $n \in \Z$ was arbitrary, $\phi$ is harmonic on $\Cluster_\omega$.
We now define
\begin{equation*}	\textstyle
\psi(\omega,v)	\defeq	\frac{\Ermp[L_1]}{\Ermp[C_{1}^{-1}]} (\phi(\omega,v) - \phi(\omega,\mathbf{0})),	\quad	v \in \Cluster_{\omega},
\end{equation*}
where we remind the reader that the $L_n$, $n \in \Z$ were defined in \eqref{eq:C_n and L_n}.
Notice that the expectations $\Ermp[L_1]$ and $\Ermp[C_{1}^{-1}]$ are finite by Lemma \ref{Lem:effective resistances}.
Since $\phi(\omega,\cdot)$ is harmonic under $P_{\omega}$, so is $\psi(\omega,\cdot)$
as an affine transformation of $\phi(\omega,\cdot)$.
It turns out that $\psi$ is more suitable for our purposes as $\psi$ is additive in a certain sense.
Next, we collect some facts about $\psi$.

\begin{proposition}	\label{Prop:corrector}
Consider the function $\psi: \Omega \times V \to \R$ constructed above.
The following assertions hold:
\begin{enumerate}[(a)]
	\item
		For $\Prmp$-almost all $\omega \in \Omega_{\mathbf{0}}$,
		the function $v \mapsto \psi(\omega,v)$ is harmonic with respect to (the transition probabilities of) $P_{\omega}$.
	\item
		For $\Prmp$-almost all $\omega \in \Omega_{\mathbf{0}}$ and all $u,v \in \Cluster_\omega$,
		it holds that
		\begin{equation}	\label{eq:additivity of psi}	\textstyle
		\psi(\omega,u+v) = \psi(\omega,u) + \psi(\theta^u \omega,v).
		\end{equation}
	\item
		For $\Prmp$-almost all $\omega \in \Omega_{\mathbf{0}}$
		\begin{equation}	\label{eq:uniform bound fo increments of psi}
		\sup_{v \sim w} |\psi(\omega,v)-\psi(\omega,w)| \1_{\{v \in \Cluster_\omega\}} \1_{\{\omega(\langle v,w \rangle)=1\}}\leq {\textstyle \frac{\Ermp[L_1]}{\Ermp[C_{1}^{-1}]}}.
		\end{equation}
\end{enumerate}
\end{proposition}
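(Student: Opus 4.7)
The plan is to exploit the fact that $\phi$ is built cycle-by-cycle from electrical potentials and to verify the three assertions by combining standard electrical network arguments (Kirchhoff, Rayleigh monotonicity, flow decomposition) with the translation structure of the cycle decomposition.

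For part (a), the harmonicity splits naturally into two cases. Interior harmonicity, at vertices strictly inside some cycle $\omega_n$, is already contained in the construction: $\phi$ restricted to $\omega_n$ is the voltage function of the unit current flow from $R_{n-1}^{\mathrm{pre}}$ to $R_n^{\mathrm{pre}}$ and is thus harmonic for the simple random walk on $\omega_n$ away from the two endpoints; the self-loops appearing in $P_\omega$ do not affect this. At a pre-regeneration point $R_n^{\mathrm{pre}} = (k,0)$ the only cluster-neighbors are $(k-1,0) \in \omega_n$ and $(k+1,0) \in \omega_{n+1}$, because $(k,1)$ is isolated. Kirchhoff's law in the finite network $\omega_n$ implies that exactly one unit of current arrives at $R_n^{\mathrm{pre}}$ from $(k-1,0)$, and the analogous statement in $\omega_{n+1}$ (where $R_n^{\mathrm{pre}}$ now plays the role of source) says that exactly one unit leaves $R_n^{\mathrm{pre}}$ toward $(k+1,0)$; these two contributions cancel, so the harmonic equation is satisfied at $R_n^{\mathrm{pre}}$ as well.

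For part (b), the pivotal observation is the shift-equivariance of the cycle decomposition. If $u \in \Cluster_\omega$, then the pre-regeneration points of $\theta^u \omega$ are obtained from those of $\omega$ by translating by $-u$ and relabeling so that the first non-negative one becomes $R_0^{\mathrm{pre}}(\theta^u \omega)$, and the effective conductances $C_n$ and lengths $L_n$ are intrinsic invariants of the cycles that transform accordingly. Because $\phi(\omega, u+v) - \phi(\omega, u)$ is built by telescoping the increments $1/C_k$ across the pre-regeneration points separating $u$ from $u+v$ and adding the within-cycle effective resistances at the two endpoints, the very same telescoping in the shifted configuration starting from $\mathbf{0}$ reproduces it. Multiplying by the common prefactor $\Ermp[L_1]/\Ermp[C_{1}^{-1}]$ then yields \eqref{eq:additivity of psi}.

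For part (c), $|\psi(\omega,v) - \psi(\omega,w)| = \frac{\Ermp[L_1]}{\Ermp[C_{1}^{-1}]} |\phi(\omega,v) - \phi(\omega,w)|$, so it suffices to prove $|\phi(\omega,v) - \phi(\omega,w)| \le 1$ whenever $v \in \Cluster_\omega$ and the edge $\langle v,w \rangle$ is open. Such an edge lies in a unique cycle $\omega_n$, and there $\phi - \phi(R_{n-1}^{\mathrm{pre}})$ is the potential of the unit current flow from $R_{n-1}^{\mathrm{pre}}$ to $R_n^{\mathrm{pre}}$. Since every edge has conductance one, the current through $\{v,w\}$ equals $\phi(v) - \phi(w)$. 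Orienting edges along the direction of current renders this flow acyclic, so the standard path decomposition writes it as a convex combination of indicator flows along simple paths from $R_{n-1}^{\mathrm{pre}}$ to $R_n^{\mathrm{pre}}$ of total weight one; the current through any single edge is therefore at most one. The main obstacle is the careful bookkeeping in the reindexing of cycles needed for (b); parts (a) at pre-regeneration points and (c) follow quickly once the cycle-wise electrical description of $\phi$ is in place.
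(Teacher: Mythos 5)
Your parts (a) and (c) are sound. For (a) the paper simply declares harmonicity ``clear from the construction''; your explicit Kirchhoff gluing at the pre-regeneration points (one unit of current entering $R_n^{\mathrm{pre}}$ through the unique open edge of $\omega_n$ incident to it, one unit leaving into $\omega_{n+1}$, self-loops irrelevant) is a correct and more complete version of that step. For (c) you take a genuinely different route: the paper bounds $|\phi(v)-\phi(w)|$ by the triangle inequality for the effective-resistance metric together with Rayleigh's monotonicity (giving $\Res(v\leftrightarrow w)\le 1$ for an open edge), whereas you identify $\phi(v)-\phi(w)$ with the current through $\langle v,w\rangle$ in the unit current flow and bound it by $1$ via the acyclic path decomposition. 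Both work; yours has the merit of relying only on the voltage interpretation of $\phi$ within each cycle, which is also the reading under which (a) holds.

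The gap is in (b). You assert that the pre-regeneration points of $\theta^u\omega$ are obtained from those of $\omega$ by translating by $-u$ and relabeling, with $C_n$ and $L_n$ transforming accordingly. This is only true when $\y(u)=0$. Since $V$ is identified with $\Z\times\Z_2$ and $\theta^u$ sends $w$ to $w-u$, a shift by $u$ with $\y(u)=1$ exchanges the two rows of the ladder. A pre-regeneration point of $\omega$ is a vertex $(i,0)$ whose partner $(i,1)$ is isolated; after the flip the isolated vertex lands in the bottom row and the cut vertex in the top row, so that column no longer hosts a pre-regeneration point of $\theta^u\omega$ under the paper's definition, while the pre-regeneration points of $\theta^u\omega$ sit at the columns where the \emph{bottom} vertex of $\omega$ is isolated. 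Hence the cycle decomposition of $\theta^u\omega$ is built over a different family of cut vertices, and your telescoping of the increments $1/C_k$ does not literally match the one for $\omega$. The paper treats $\y(u)=0$ essentially as you do (expressing the within-cycle term as $P_\omega^u(Y_T=R_n^{\mathrm{pre}})/C_n$ and telescoping), but explicitly isolates $\y(u)=1$ as a separate case, arguing that the old cut vertices remain pivotal in $\theta^u\omega$ so that the series law still forces the two potentials to agree. To close the gap you must either supply that argument, or show directly that $\phi(\theta^u\omega,\cdot)$ and $\phi(\omega,\cdot+u)-\phi(\omega,u)$ are both the voltage of the global unit current flow on $\Cluster_{\theta^u\omega}$ normalized to vanish at the origin, and that this object does not depend on which family of cut vertices is used to construct it.
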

\begin{proof}
Assertion (a) is clear from the construction of $\psi$.
For the proof of (b), in order to ease notation,
we drop the factor $\Ermp[L_1]/\Ermp[C_{1}^{-1}]$ in the definition of $\psi$.
This is no problem as \eqref{eq:additivity of psi} remains true after multiplication by a constant.
Now fix $\omega \in \Omega_{\mathbf{0}}$ such that there are infinitely many pre-regeneration points
to the left and to the right of $\mathbf{0}$. The set of these $\omega$ has $\Prmp$-probability $1$.
Let $u,v \in \Cluster_{\omega}$. 
We suppose that there are $n,m \in \N_0$ such that $u \in \omega_n$ and $v \in \omega_{n+m}$.
The other cases can be treated similarly.
We further assume that $\y(u)=0$.
Define $T \defeq \inf\{n \in \N_0: Y_n \in \Rpre\}$ to be the first hitting time of the set of pre-regeneration points.
From Proposition 9.1 in \cite{Levin+Peres+Wilmer:2009}, we infer
\begin{align}
\psi(\omega,u)
&=
\sum_{k=1}^{n-1} \tfrac{1}{C_k(\omega)} + P_{\omega}^u(Y_T = R_n^{\mathrm{pre}}) \tfrac{1}{C_n(\omega)} - \phi(\omega,\mathbf{0})	\label{eq:psi(omega,u)}	\\
\text{and }
\psi(\theta^u\omega,v)
&=
\sum_{k=n+1}^{n+m-1} \tfrac{1}{C_k(\omega)} + P_{\theta^u \omega}^v(Y_T = R_{m}^{\mathrm{pre}}) \tfrac{1}{C_{n+m}(\omega)} - \phi(\theta^u \omega,\mathbf{0})	\label{eq:psi(theta^u omega,v)}
\end{align}
where we have used that $\y(u)=0$ which implies that the pre-regeneration points in $\theta^u \omega$
are the pre-regeneration points in $\omega$ but shifted by index $n$ as $u \in \omega_n$.
Here,
\begin{align*}
P_{\theta^u \omega}^v(Y_T = R_{m}^{\mathrm{pre}})
=	\textstyle
P_{\omega}^{u+v}(Y_T = R_{n+m}^{\mathrm{pre}})	\\
\text{and}	\quad	
- \phi(\theta^u \omega,\mathbf{0}) =	\textstyle
P_{\theta^u \omega}^{\mathbf{0}}(Y_T=R_{-1}^{\mathrm{pre}}) \frac{1}{C_{n}} = P_{\omega}^u (Y_T=R_{n-1}^{\mathrm{pre}}) \frac{1}{C_{n}}.
\end{align*}
Using the last two equations in \eqref{eq:psi(theta^u omega,v)}
and summing over \eqref{eq:psi(omega,u)} and \eqref{eq:psi(theta^u omega,v)} gives:
\begin{align*}	\textstyle
\psi(\omega,u) + \psi(\theta^u \omega,v)
&=	\textstyle
\sum_{k=1}^{n+m-1} \frac{1}{C_k} + P_{\omega}^{u+v}(Y_T = R_{n+m}^{\mathrm{pre}}) \frac{1}{C_{n+m}} - \phi(\omega,\mathbf{0}) \\
&=	\textstyle
\psi(\omega,u+v).
\end{align*}
The proof in the case $\y(u)=1$ is similar but requires more cumbersome calculations as the pre-regenerations change when considering $\theta^u \omega$
instead of $\omega$ due to the flip of the cluster. However, the pre-regeneration points $\omega$ remain pivotal edges in $\theta^u \omega$
and by the series law the corresponding resistances add. We refrain from providing further details.

\noindent
We now turn to the proof of assertion (c).
According to the definition of $\psi$, the statement is equivalent to
\begin{equation}	\label{eq:uniform bound fo increments of phi}
\sup_{v \sim w} |\phi(\omega,v)-\phi(\omega,w)| \1_{\{v \in \Cluster_\omega\}} \1_{\{\omega(\langle v,w \rangle)=1\}} \leq 1
\quad \text{ for $\Prmp$-almost all $\omega$}.
\end{equation}
For the proof of \eqref{eq:uniform bound fo increments of phi}, pick $\omega \in \Omega_{\mathbf{0}}$
such that there are infinitely many pre-regeneration points to the left and to the right of the origin.
Now pick $v,w \in \Cluster_\omega$ with $\omega(\langle v,w \rangle) = 1$.
Then there is an $n \in \Z$ such that $v,w$ are vertices of $\omega_n$ and $\langle v,w \rangle$ is an edge of $\omega_n$.
In this case, by the definition of $\phi$,
$\phi(v) \defeq \phi(a) + \Res(a \leftrightarrow v)$
and $\phi(w) \defeq \phi(a) + \Res(a \leftrightarrow w)$
for $a=R_{n-1}^{\mathrm{pre}}$
where $\Res(u \leftrightarrow u')$ denotes the effective resistance between $u$ and $u'$ in the finite network $\omega_n$.
To unburden notation, we assume without loss of generality that $\phi(a)=0$.
Then $\Res(\cdot \leftrightarrow \cdot)$ is a metric on the vertex set of $\omega_n$, see \cite[Exercise 9.8]{Levin+Peres+Wilmer:2009}.
In particular, $\Res(\cdot \leftrightarrow \cdot)$ satisfies the triangle inequality.
This gives
\begin{equation*}
\phi(w) = \Res(a\!\leftrightarrow\!w) \leq \Res(a\!\leftrightarrow\!v) + \Res(v\!\leftrightarrow\!w) \leq \Res(a\!\leftrightarrow\!v) + 1 = \phi(v)+1,
\end{equation*}
where we have used that $\Res(v \leftrightarrow w) \leq 1$.
This inequality follows from Raleigh's monotonicity principle \cite[Theorem 9.12]{Levin+Peres+Wilmer:2009}
when closing all edges in $\omega_n$ except $\langle v,w \rangle$.
By symmetry, we also get $\phi(v) \leq \phi(w)+1$ and, hence, $|\phi(v)-\phi(w)| \leq 1$.
\end{proof}

For $v \in V$ (and fixed $\omega$), we define $\chi(v) \defeq \x(v) - \psi(v)$.
Then $X_n = \chi(Y_n) + \psi(Y_n)$.
For the proof of the Einstein relation,
we require strong bounds on the corrector $\chi$.
These bounds are established in the following lemma.

\begin{lemma}	\label{Lem:almost sure bounds corrector}
For any $\varepsilon \in (0,\frac12)$ and every sufficiently small $\delta > 0$ there is a random variable $K$ on $\Omega$ with
$\Ermp[K^2]<\infty$ such that
\begin{equation}	\label{eq:pathwise bound on chi(omega,v)}
|\chi(\omega,v)| \leq K(\omega) + \varepsilon |\x(v)|^{\frac12+\delta}
\quad	\text{ for all } v \in \Cluster_\omega\ \Prmp\text{-almost surely.}
\end{equation}
Further, there is a random variable $D \in L^2(\Prob)$ such that
\begin{equation}	\label{eq:pathwise bound on chi(Y_k)}
|\chi(Y_k)| \leq D + k^{\frac14+\delta}
\quad \Prob\text{-almost surely for all } k \in \N.
\end{equation}
\end{lemma}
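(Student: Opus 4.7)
The plan is to reduce the bound at arbitrary vertices $v$ to a bound at the pre-regeneration points $R_n^{\mathrm{pre}}$, where the construction of $\psi$ provides an explicit formula. At such a point, $\psi(R_n^{\mathrm{pre}}) = \alpha(\Psi(n)-\Psi(0))$ with $\alpha \defeq \Ermp[L_1]/\Ermp[C_1^{-1}]$ and $\Psi(n) = \sum_{k=1}^n 1/C_k$, while $\x(R_n^{\mathrm{pre}}) - \x(R_0^{\mathrm{pre}}) = \sum_{k=1}^n L_k$. Subtracting gives $\chi(R_n^{\mathrm{pre}}) = a(\omega) + S_n$, where $a$ is independent of $n$ and $S_n \defeq \sum_{k=1}^n(L_k - \alpha/C_k)$. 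By Lemma \ref{Lem:effective resistances}, the summands are independent for $k \geq 1$, centered by the definition of $\alpha$, and have finite exponential moments; an analogous representation is available on the negative side by stationarity.

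For the fluctuations, I would apply Doob's $L^2$-maximal inequality to obtain $\Ermp[\max_{k \leq N} S_k^2] = O(N)$, then sum over dyadic scales $N = 2^j$ to conclude that $\sup_{n \neq 0}|S_n|/|n|^{1/2+\delta} \in L^2(\Prmp)$ for every $\delta > 0$. To pass from $|n|$ to $|\x(R_n^{\mathrm{pre}})|$, the SLLN yields $|n| \asymp |\x(R_n^{\mathrm{pre}})|/\Ermp[L_1]$ eventually, and the transient error can be absorbed into an $L^2$-random variable. For $v$ in the interior of $\omega_n$, the series law gives $|\chi(v)-\chi(R_{n-1}^{\mathrm{pre}})| \leq C L_n$, and the exponential moments of $L_n$ together with a Borel-Cantelli argument show $\sup_n L_n/(1+|n|^\delta) \in L^2(\Prmp)$ for every $\delta > 0$. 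Collecting these contributions into a single random variable $K \in L^2(\Prmp)$ yields \eqref{eq:pathwise bound on chi(omega,v)}.

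For \eqref{eq:pathwise bound on chi(Y_k)}, I would specialize \eqref{eq:pathwise bound on chi(omega,v)} to $v = Y_k$ and then bootstrap against a martingale bound for $\psi(Y_k)$. Since $\psi(Y_k)$ is a $P_\omega$-martingale with increments uniformly bounded by $\alpha$ (Proposition \ref{Prop:corrector}(c)), Azuma-Hoeffding combined with Borel-Cantelli yields $\sup_k |\psi(Y_k)|/k^{1/2+\eta} \in L^2(\Prob)$ for every $\eta > 0$. Writing $X_k = \psi(Y_k)+\chi(Y_k)$ and inserting the crude bound $|\chi(Y_k)| \leq K + \varepsilon|X_k|^{1/2+\delta}$ (which, since $1/2+\delta < 1$, can be solved for $|X_k|$) gives $|X_k| \leq C k^{1/2+\eta} + K'$ with $K' \in L^2(\Prob)$. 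Substituting back produces a bound of order $k^{(1/2+\eta)(1/2+\delta)}$, and choosing $\eta$ and $\delta$ small enough this is $\leq k^{1/4+\delta'}$ for any prescribed $\delta' > 0$. The main obstacle will be the bookkeeping required to keep $K$, $K'$ and $D$ in $L^2$ simultaneously through the dyadic summation and the bootstrap.
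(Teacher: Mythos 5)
Your architecture coincides with the paper's: the same reduction $\chi(R_n^{\mathrm{pre}}) = a(\omega) + S_n$ with $S_n$ a centered i.i.d.\ sum whose increments $L_k - \alpha/C_k$ have exponential moments (Lemma \ref{Lem:effective resistances}), the same within-cycle correction of order $L_n$, and the same bootstrap of $X_k = \psi(Y_k) + \chi(Y_k)$ against a martingale bound for $\psi(Y_k)$. The gap is in the fluctuation estimate for $S_n$. Doob's $L^2$ maximal inequality plus dyadic summation yields only $Z \defeq \sup_n |S_n|/n^{\frac12+\delta} \in L^2$, i.e.\ the \emph{multiplicative} bound $|S_n| \leq Z\, n^{\frac12+\delta}$. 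This does not deliver the asserted \emph{additive} form $|S_n| \leq K + \varepsilon n^{\frac12+\delta}$ with a prescribed deterministic $\varepsilon$ and $K \in L^2$: to convert, one compares $Z n^{\frac12+\delta'}$ (for some $\delta' < \delta$) with $\varepsilon n^{\frac12+\delta}$ and must absorb $|S_n|$ up to the random crossover index $n^* \sim (Z/\varepsilon)^{1/(\delta-\delta')}$, which produces a constant of order $Z^{1+(\frac12+\delta')/(\delta-\delta')}$; for this to be in $L^2$ you need moments of $Z$ of order roughly $1/\delta$, far beyond what Doob in $L^2$ gives. The additive form with small $\varepsilon$ is exactly what the lemma claims and what the bootstrap and Lemma \ref{Lem:max X_k^2 bound} consume. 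The paper obtains it from moderate-deviation estimates (Lemma \ref{Lem:moderate deviations estimates}(c)): the tail $\Prm(|S_n| \geq \varepsilon n^{\frac12+\delta}) \leq e^{-cn^{2\delta}}$ makes the last exceedance index $L$ have all moments, and $K \defeq \sup_{n \leq L}|S_n|$ is then in $L^2$ by a stopped-sum moment lemma. You can salvage your route only by upgrading the maximal inequality to $L^p$ for every $p$ (Rosenthal/Burkholder, using the exponential moments of the increments), after which the crossover constant becomes integrable.

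The same form issue infects your statement ``$\sup_k|\psi(Y_k)|/k^{\frac12+\eta} \in L^2(\Prob)$'': with only that, the subsequent claim $|X_k| \leq C k^{\frac12+\eta} + K'$ with \emph{deterministic} $C$ does not follow. Here, however, your own tools suffice: Azuma--Hoeffding gives Gaussian tails $\Prm(|\psi(Y_k)| \geq \varepsilon k^{\frac12+\eta}) \leq 2e^{-c k^{2\eta}}$, so the last-exceedance construction yields the additive bound $|\psi(Y_k)| \leq K_2 + \varepsilon k^{\frac12+\eta}$ with $K_2 \in L^2$ directly (this is Lemma \ref{Lem:moderate deviations estimates}(d)); you should state and use it in that form. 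A minor remark: the SLLN detour relating $n$ to $\x(R_n^{\mathrm{pre}})$ is unnecessary in the direction you need, since consecutive pre-regeneration points differ by at least one unit in the $\x$-coordinate, whence $n \leq \x(R_n^{\mathrm{pre}}) \leq \x(v)$ deterministically and $n^{\frac12+\delta} \leq |\x(v)|^{\frac12+\delta}$ is immediate.
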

\begin{proof}
For $k \in \Z$, set $\eta_k \defeq L_k - \frac{\Ermp[L_1]}{\Ermp[C_1^{-1}]} \frac{1}{C_k}$.
Then, for $n \in \N$,
\begin{align*}
\chi(R^{\mathrm{pre}}_n)
&=	\textstyle
\x(R^{\mathrm{pre}}_n) - \psi(R^{\mathrm{pre}}_n)
= \sum_{k=1}^n \big(L_k - \frac{\Ermp[L_1]}{\Ermp[C_1^{-1}]} \frac{1}{C_k}\big) + \frac{\Ermp[L_1]}{\Ermp[C_1^{-1}]} \phi(\omega,\mathbf{0})	\\
&\eqdef	\textstyle
\sum_{k=1}^n \eta_k + \frac{\Ermp[L_1]}{\Ermp[C_1^{-1}]} \phi(\omega,\mathbf{0})
\end{align*}
where $\eta_1,\ldots,\eta_n$ are i.i.d.\ centered random variables.
Here $\Ermp[e^{\vartheta |\eta_1|}] < \infty$ for some $\vartheta > 0$ by Lemma \ref{Lem:effective resistances}.
From \eqref{eq:bound on S_n}, the fact that $|\phi(\omega,\mathbf{0})| \leq 1/{C_0}$ and again
Lemma \ref{Lem:effective resistances}, which guarantees that $\Ermp[1/{C_0^2}]<\infty$,
we thus infer, for arbitrary given $\varepsilon \in (0,\frac14)$ and $\delta \in (0,\frac12)$,
\begin{equation}	\label{eq:chi(R_n^pre) bound}	\textstyle
|\chi(R^{\mathrm{pre}}_n)|
\leq  |\eta_1+\ldots+\eta_n| + \frac{\Ermp[L_1]}{\Ermp[C_1^{-1}]}\frac1{C_0}
\leq K_2 + \varepsilon n^{\frac12+\delta}
\end{equation}
for all $n \in \N$ and a random variable $K_2$ on $(\Omega,\F)$ satisfying $\Ermp[K_2^2]<\infty$.
If $v \in \omega_n$ for some $n > 0$,
then
\begin{equation*}	\textstyle
|\chi(R^{\mathrm{pre}}_n) - \chi(v)| \leq L_n + \frac{\Ermp[L_1]}{\Ermp[C_1^{-1}]} \frac{1}{C_n}
\eqdef \xi_n.
\end{equation*}
The $\xi_n$, $n \in \N$ are nonnegative and i.i.d.\ under $\Prmp$.
Hence, \eqref{eq:bound on xi_n} gives
\begin{equation}	\label{eq:bound on xi_n corrector}
\xi_n \leq K_1 + \varepsilon n^{\frac12+\delta}
\end{equation}
for all $n \in \N$, where $K_1$ is a nonnegative random variable on $(\Omega,\F)$
with $\Erm[K_1^2] < \infty$.
Analogous arguments apply when $v \in \omega_n$ with $n \leq 0$.
Combining \eqref{eq:chi(R_n^pre) bound} and \eqref{eq:bound on xi_n corrector},
we infer that for sufficiently small $\delta > 0$
there is a random variable $K \geq 1$ on $(\Omega,\F)$ with $\Ermp[K^2]<\infty$
such that
\begin{equation*}
|\chi(\omega,v)| \leq K(\omega) + 2 \varepsilon |\x(v)|^{\frac12+\delta}
\end{equation*}
for all $v \in \Cluster_\omega$, i.e., \eqref{eq:pathwise bound on chi(omega,v)} holds.

Finally, we turn to the proof of \eqref{eq:pathwise bound on chi(Y_k)}.
We use \eqref{eq:pathwise bound on chi(omega,v)} with $\varepsilon = \frac12$ twice
and $|\x(Y_k)| \leq k$ to infer
for some sufficiently small $\delta \in (0,\frac12)$,
\begin{align}	\textstyle
|\chi(Y_k)|
&\leq K(\omega) + \tfrac12 |X_k|^{\frac12+\delta} = K(\omega) + \tfrac12 |\psi(Y_k) + \chi(Y_k)|^{\frac12+\delta}	\notag	\\
&\leq K(\omega) + \tfrac12 |\psi(Y_k)|^{\frac12 + \delta} + \tfrac12 |\chi(Y_k)|^{\frac12+\delta}	\notag	\\
&\leq K(\omega) + \tfrac12 |\psi(Y_k)|^{\frac12 + \delta} + \tfrac12 K(\omega)^{\frac12+\delta} + \tfrac12 k^{(\frac12+\delta)^2} 	\label{eq:bound chi(Y_k) almost final}
\end{align}
$P_{\omega}$-a.\,s.\ for all $k \in \N_0$.
Now $(\psi(\omega,Y_k))_{k \in \N_0}$ is a martingale with respect to $P_\omega$
and also a $\Prob$-martingale with respect to its canonical filtration.
As it has bounded increments, we may apply Lemma \ref{Lem:moderate deviations estimates}(d)
to infer the existence of a variable $K_2 \in L^2(\Prob)$ such that
$|\psi(\omega,Y_k)| \leq K_2 + k^{\frac12 + \delta}$ for all $k \in \N_0$ $\Prob$-a.\,s.
Using this in \eqref{eq:bound chi(Y_k) almost final}, we conclude
\begin{align*}
|\chi(Y_k)|
&\leq K(\omega) + \tfrac12 K_2^{\frac12 + \delta} + \tfrac12 k^{(\frac12 + \delta)^2} + \tfrac12 K(\omega)^{\frac12+\delta} + \tfrac12 k^{(\frac12+\delta)^2}	\\
&\leq K(\omega) + \tfrac12 K(\omega)^{\frac12+\delta} + K_2^{\frac12 + \delta} + k^{(\frac12 + \delta)^2}
\end{align*}
$\Prob$-a.\,s.\ for all $k \in \N_0$.
The assertion now follows with
$D(\omega,(v_k)_{k \in \N_0}) \defeq K(\omega) + \tfrac12 K(\omega)^{\frac12+\delta} + K_2(\omega,(v_k)_{k \in \N_0})^{\frac12 + \delta}$
and with the observation that $(\frac12 + \delta)^2 \to \frac14$ as $\delta \downarrow 0$.
\end{proof}

\section{Quenched joint functional limit theorem via the corrector method}	\label{sec:corrector}

From Proposition \ref{Prop:corrector} and the martingale central limit theorem,
we infer the following result.

\begin{proposition}	\label{Prop:martingale CLT}
For $\Prmp$-almost all $\omega \in \Omega$,
\begin{equation}	\label{eq:joint invariance principle psi,M}	\textstyle
\frac1{\sqrt{n}} (\psi(Y_{\lfloor nt\rfloor}),M_{\lfloor nt \rfloor})		\Rightarrow	(B,M)
\quad	\text{under } P_{\omega}
\end{equation}
where $(B,M)$ is a two-dimensional centered Brownian motion with covariance matrix
$\Sigma = (\sigma_{ij})_{i,j=1,2}$ of the form
\begin{equation*}
\begin{pmatrix}
\sigma_{11}	&	\sigma_{12}	\\
\sigma_{21}	&	\sigma_{22}
\end{pmatrix}
=
\begin{pmatrix}
\E[\psi(Y_1)^2]							&	\E[\psi(Y_1) \nu_{\omega}(\mathbf{0},Y_1)]	\\
\E[\psi(Y_1) \nu_{\omega}(\mathbf{0},Y_1)]	&	\E[\nu_{\omega}(\mathbf{0},Y_1)^2]
\end{pmatrix}.
\end{equation*}
\end{proposition}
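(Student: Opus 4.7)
My plan is to apply a functional martingale central limit theorem to $(\psi(Y_n), M_n)$ and verify its hypotheses via the quenched ergodic theorem for the environment seen from the walker. By Proposition \ref{Prop:corrector}(a), $(\psi(Y_n))_{n \geq 0}$ is a $P_\omega$-martingale, and by construction so is $(M_n)_{n \geq 0}$. Both have uniformly bounded increments: $|\psi(Y_k) - \psi(Y_{k-1})| \leq \Ermp[L_1]/\Ermp[C_1^{-1}]$ by Proposition \ref{Prop:corrector}(c), and $|\nu_\omega(Y_{k-1}, Y_k)|$ is bounded by a deterministic constant because there are only finitely many types of one-step transitions. I would then invoke the Cramér--Wold device: since the candidate limit has continuous sample paths, joint tightness in $D[0,1]^2$ follows from marginal tightness, so it suffices to prove the one-dimensional functional CLT for $N_n \defeq a \psi(Y_n) + b M_n$ for all $(a,b) \in \R^2$.

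The heart of the argument is to express the conditional quadratic variation of $N_n$ as an additive functional of the environment process $(\overline{\omega}(k))_{k \in \N_0}$. The additivity identity \eqref{eq:additivity of psi} gives $\psi(Y_k) - \psi(Y_{k-1}) = \psi(\theta^{Y_{k-1}} \omega, Y_k - Y_{k-1})$, while shift invariance of the one-step transitions \eqref{eq:P_xi lazy} yields directly $\nu_\omega(Y_{k-1}, Y_k) = \nu_{\theta^{Y_{k-1}} \omega}(\mathbf{0}, Y_k - Y_{k-1})$. Setting
\begin{equation*}
f_{a,b}(\omega') \defeq E^{\mathbf{0}}_{\omega'}\bigl[(a \, \psi(\omega', Y_1) + b \, \nu_{\omega'}(\mathbf{0}, Y_1))^2\bigr],
\end{equation*}
one then obtains $\langle N \rangle_n = \sum_{k=0}^{n-1} f_{a,b}(\overline{\omega}(k))$. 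Since $f_{a,b}$ is bounded, the quenched Birkhoff statement of Lemma \ref{Lem:Birkhoff quenched} gives $n^{-1} \langle N \rangle_{\lfloor n t \rfloor} \to t \, \Ermp[f_{a,b}]$ $P_\omega$-a.s.\ for $\Prmp$-a.e.\ $\omega$ and each $t \in [0,1]$; expanding the square identifies $\Ermp[f_{a,b}] = a^2 \sigma_{11} + 2 a b \sigma_{12} + b^2 \sigma_{22}$ with the $\sigma_{ij}$ of the statement (symmetry $\sigma_{12}=\sigma_{21}$ is automatic from the polarization).

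Combined with the trivial Lindeberg condition coming from bounded increments, this is precisely the hypothesis of the standard functional martingale CLT (e.g., Theorem VIII.3.11 in Jacod--Shiryaev), which delivers $n^{-1/2} N_{\lfloor n \cdot \rfloor} \Rightarrow \sqrt{a^2 \sigma_{11} + 2 a b \sigma_{12} + b^2 \sigma_{22}}\, B^*$ in $D[0,1]$ under $P_\omega$ for $\Prmp$-a.e.\ $\omega$. The main technical obstacle I anticipate is promoting this to a single $\Prmp$-full-measure set of $\omega$ on which the scalar CLT holds \emph{simultaneously} for all $(a,b) \in \R^2$, since a priori the ergodic-theorem null set depends on the parameters. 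I would handle this by running the argument first on a countable dense $(a,b) \in \Q^2$ and then extending to all of $\R^2$ by continuity of $(a,b) \mapsto a^2\sigma_{11} + 2ab\sigma_{12} + b^2\sigma_{22}$ together with an elementary approximation argument in $D[0,1]$. Piecing the scalar statements together via Cramér--Wold then yields the claimed joint functional convergence to a centered two-dimensional Brownian motion with covariance matrix $\Sigma$.
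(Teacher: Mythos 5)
Your proposal is correct and follows essentially the same route as the paper: scalar linear combinations via Cram\'er--Wold, the functional martingale CLT with the conditional quadratic variation rewritten as an additive functional of $(\overline{\omega}(k))_{k \geq 0}$ using Proposition \ref{Prop:corrector}(b), the quenched Birkhoff statement of Lemma \ref{Lem:Birkhoff quenched}, and marginal tightness to upgrade finite-dimensional convergence to joint convergence in $D[0,1]$. The ``main technical obstacle'' you flag dissolves at once: since $f_{a,b} = a^2 f + 2ab\,g + b^2 h$ is a fixed quadratic form in $(a,b)$ whose coefficients $f,g,h$ are three functions of the environment alone, applying Lemma \ref{Lem:Birkhoff quenched} to $f$, $g$ and $h$ separately produces a single $\Prmp$-full-measure set of $\omega$ on which the ergodic limit holds simultaneously for all $(a,b)$, so no countable-dense-subset argument is needed --- this is exactly how the paper proceeds.
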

\begin{proof}
Let $\alpha,\beta \in \R$.
We prove that $n^{-\frac12}(\alpha \psi(Y_{\lfloor nt\rfloor}) + \beta M_{\lfloor nt \rfloor})$
converges to a centered Brownian motion in the Skorokhod space $D([0,1])$.
To this end, we invoke the martingale functional central limit theorem \cite[Theorem 3.2]{Helland:1982}.
Let
\begin{equation*}	\textstyle
\xi_{n,k}(\alpha,\beta) \defeq \frac{\alpha}{\sqrt{n}} (\psi(Y_k)-\psi(Y_{k-1})) + \frac{\beta}{\sqrt{n}} \nu_{\omega}(Y_{k-1},Y_k),
\end{equation*}
for $n \in \N$ and $k=1,\ldots,n$.
In order to apply the result, it suffices to check the following two conditions:
\begin{align}
\sum_{k=1}^{\lfloor nt \rfloor} E_{\omega}\big[\xi_{n,k}(\alpha,\beta)^2 \mid \G_{k-1}\big]
\to c(\alpha,\beta) t	\quad	\text{in } P_{\omega}\text{-probability}	\label{eq:Helland (3.4)}\\
\sum_{k=1}^{\lfloor nt \rfloor} E_{\omega}\big[\xi_{n,k}(\alpha,\beta)^2 \1_{\{|\xi_{n,k}(\alpha,\beta)| > \varepsilon\}} \mid \G_{k-1}\big]
\to 0	\quad	\text{in } P_{\omega}\text{-probability}	\label{eq:Helland (3.5)}
\end{align}
for every $\varepsilon > 0$
where $c(\alpha,\beta)$ is a suitable constant depending on $\alpha,\beta$.
Here, $\G_k \defeq \sigma(Y_0,\ldots,Y_k) \subset \G$ where $Y_0,Y_1,\ldots$ are considered
as functions $V^{\N_0} \to V$.
To check \eqref{eq:Helland (3.4)}, we first define the functions $f,g,h: \Omega \to \R$,
\begin{align*}
f(\omega)	&\defeq E_{\omega}[\psi(\omega,Y_1)^2],	\\
g(\omega)	&\defeq E_{\omega}[\psi(\omega,Y_1) \nu_{\omega}(\mathbf{0},Y_1)],	\\
h(\omega) &\defeq E_{\omega}[\nu_{\omega}(\mathbf{0},Y_1)^2].
\end{align*}
These three functions are finite
and integrable with respect to $\Prmp$.
This follows from Proposition \ref{Prop:corrector}(c) for $\psi$ and from the boundedness of $\nu_{\omega}(v,w)$
as a function of $\omega,v,w$.
From Proposition \ref{Prop:corrector}(b) (with $u=Y_{k-1}$ and $v= Y_k-Y_{k-1}$), we infer for every $k \in \N$:
\begin{align*}
f(\overline{\omega}(k-1))
&=
E_{\omega}\big[(\psi(\omega,Y_k)-\psi(\omega,Y_{k-1}))^2 \mid \G_{k-1}\big],	\\
g(\overline{\omega}(k-1))
&= 
E_{\omega}\big[(\psi(Y_k)-\psi(Y_{k-1})) \nu_{\omega}(Y_{k-1},Y_k) \mid \G_{k-1}\big],	\\
h(\overline{\omega}(k-1))
&= 
E_{\omega}\big[\nu_{\omega}(Y_{k-1},Y_k)^2 \mid \G_{k-1}\big].
\end{align*}
Lemma \ref{Lem:Birkhoff quenched} thus gives
\begin{align}
&		\textstyle
\sum_{k=1}^{\lfloor nt \rfloor} E_{\omega}\big[\xi_{n,k}(\alpha,\beta)^2 \mid \G_{k-1}\big]		\notag	\\
&~=		\textstyle
\frac1n \sum_{k=1}^{\lfloor nt \rfloor} \big(\alpha^2 f(\overline{\omega}(k-1)) + 2\alpha\beta g(\overline{\omega}(k-1)) + \beta^2 h(\overline{\omega}(k-1))\big)	\notag	\\
&~\to	\textstyle
\big(\alpha^2 \Ermp[f] + 2\alpha\beta \Ermp[g] + \beta^2 \Ermp[h]\big)t.	\label{eq:covariances}
\end{align}
This gives \eqref{eq:Helland (3.4)} with $P_\omega$-almost sure convergence instead
of the weaker convergence in $P_\omega$-probability.
Equation \eqref{eq:Helland (3.5)} follows by an argument in the same spirit.
We therefore conclude that
\begin{equation*}	\textstyle
\frac{\alpha}{\sqrt{n}} \psi(Y_{\lfloor nt\rfloor}) + \frac{\beta}{\sqrt{n}} M_{\lfloor nt \rfloor} \Rightarrow
\alpha B(t) + \beta M(t)
\end{equation*}
in the Skorokhod space $D[0,1]$ as $n \to \infty$.
This implies convergence of the finite-dimensional distributions and, hence,
by the Cram\'er-Wold device,
we conclude that the finite-dimensional distributions of
$n^{-1/2}(\psi(Y_{\lfloor nt\rfloor}),M_{\lfloor nt \rfloor})$
converge to the finite-dimensional distributions of $(B,M)$.
As the sequences $n^{-1/2} \psi(Y_{\lfloor nt\rfloor})$ und $n^{-1/2} M_{\lfloor nt \rfloor}$
are tight in the Skorokhod space $D[0,1]$, so is $n^{-1/2} (\psi(Y_{\lfloor nt\rfloor}),M_{\lfloor nt \rfloor})$,
cf.\;\cite[Section 15]{Billingsley:1968}.
This implies \eqref{eq:joint invariance principle psi,M}.
The formula for the covariances follows from \eqref{eq:covariances}.
\end{proof}

We can now give the proof of Theorem \ref{Thm:joint CLT}.

\begin{proof}[of Theorem \ref{Thm:joint CLT}]
In view of \eqref{eq:joint invariance principle psi,M} and Theorem 4.1 in \cite{Billingsley:1968},
it suffices to check that, for $\Prmp$-almost all $\omega$,
\begin{equation}
\max_{k=0,\ldots,n} \frac{|\chi(\omega,Y_k)|}{\sqrt{n}}	\to	0	\quad	\text{in } P_\omega	\text{-probability.}
\end{equation}
For a sufficiently small $\delta \in (0,\frac12)$,
we conclude from \eqref{eq:pathwise bound on chi(omega,v)},
with $K$ slightly increased if necessary (it is sufficient to replace the original $K$ by $2K + 1$),
that
\begin{equation*}
|\chi(\omega,Y_k)| \leq K(\omega) + 2\varepsilon |\psi(\omega,Y_k)|^{\frac12+\delta} \text{ a.\,s.}
\end{equation*}
for all $k \in \N_0$.
By Proposition \ref{Prop:martingale CLT},
$\max_{k=0,\ldots,n} |\psi(Y_k)|/\sqrt{n}$ converges in distribution
to the maximum $\max_{0 \leq t \leq 1} |B(t)|$ of the absolute value of a Brownian motion on the unit interval.
Hence
\begin{equation}	\label{eq:chi bdd by psi}
\frac{\max_{k=0,\ldots,n} |\chi(\omega,Y_k)|}{\sqrt{n}}
\leq \frac{K(\omega)}{\sqrt{n}} + 2\varepsilon \bigg(\frac{\max_{k=0,\ldots,n} |\psi(\omega,Y_k)|}{\sqrt{n}}\bigg)^{\!\!\frac12+\delta} \frac{1}{n^{1/4-\delta/2}}
\end{equation}
which converges to $0$ in distribution by Slutsky's theorem, and hence in probability.

It remains to prove \eqref{eq:E[BM]=E[B^2]},
that is, $\E[B(1)M(1)] = \E[B(1)^2] = \sigma^2$.
Uniform integrability, see \eqref{eq:sup exp M} and \eqref{eq:max X_k^2 bound} below, implies
\begin{equation}	\label{eq:covariance matrix as moment limits}
\E[B(1)M(1)] = \lim_{n \to \infty} \frac1n \E[X_n M_n]
\quad	\text{and}	\quad
\E[B(1)^2] = \lim_{n \to \infty} \frac1n \E[X_n^2].
\end{equation}
It follows from \eqref{eq:pathwise bound on chi(omega,v)} and \eqref{eq:covariance matrix as moment limits}
that $\frac1n \E[\chi(Y_n)^2] \to 0$.
Thus, from $X_n = \psi(Y_n)+\chi(Y_n)$, we deduce
\begin{align*}
\lim_{n \to \infty} \frac1n \E[X_n M_n] = \! \lim_{n \to \infty} \frac1n \E[\psi(Y_n) M_n]
\text{ and }
\lim_{n \to \infty} \frac1n \E[X_n^2] = \! \lim_{n \to \infty} \frac1n \E[\psi(Y_n)^2].
\end{align*}
It thus remains to show
\begin{equation}	\label{eq:covariance statement in terms of psi}
\lim_{n \to \infty} \frac1n \E[\psi(Y_n) M_n]	= \lim_{n \to \infty} \frac1n \E[\psi(Y_n)^2].
\end{equation}
Since $(\psi(Y_n))_{n \in \N_0}$ and $(M_n)_{n \in \N_0}$ are martingales with respect to the same filtration,
their increments at different times are uncorrelated. Consequently,
\begin{equation}	\label{eq:E[psi(Y_n)M_n]}
\frac1n \E[\psi(Y_n) M_n] = \frac1n \sum_{k=1}^n \E\big[(\psi(Y_k)-\psi(Y_{k-1}))\nu_{\omega}(Y_{k-1},Y_k)\big].
\end{equation}
For $\omega \in \Omega$ and $w \in N_{\omega}(v) = \{w \in V: p_{\omega,0}(v,w) > 0\}$,
an elementary calculation yields
\begin{equation}	\label{eq:nu_omega(v,w) ausgewertet}
\nu_{\omega}(v,w)
= \begin{cases}
\x(w)-\x(v)											&	\text{if }	w \not = v,	\\
\sum_{u \sim v: \omega(\langle u,v \rangle)=1} (\x(u)-\x(v))		&	\text{if }	w = v.
\end{cases}
\end{equation}
In particular, $\nu_{\omega}(Y_{k-1},Y_k) = X_k-X_{k-1}$ if $Y_k \not = Y_{k-1}$.
Therefore, we can replace $\nu_{\omega}(Y_{k-1},Y_k)$ by $X_k-X_{k-1}$ in \eqref{eq:E[psi(Y_n)M_n]}
and infer
\begin{align*}
\frac1n \E&[\psi(Y_n) M_n]
= \frac1n \sum_{k=1}^n \E\big[(\psi(Y_k)-\psi(Y_{k-1}))(X_k-X_{k-1})\big]	\\
&= \frac1n \sum_{k=1}^n \E\big[(\psi(Y_k)-\psi(Y_{k-1}))(\psi(Y_k)-\psi(Y_{k-1})+\chi(Y_k)-\chi(Y_{k-1}))\big]	\\
&= \frac1n \E[\psi(Y_n)^2] + \frac1n \sum_{k=1}^n \E\big[(\psi(Y_k)-\psi(Y_{k-1}))(\chi(Y_k)-\chi(Y_{k-1}))\big].
\end{align*}
The second sum vanishes as $n \to \infty$ since $(\psi(Y_n))_{n \in \N_0}$ has bounded increments
and $\frac1n\E[|\chi(Y_n)|] \to 0$ as $n \to \infty$.
\end{proof}

\section{The proof of the Einstein relation}\label{sec:proof einstein}

\subsection{Proof of Equation \eqref{eq:supsecondmoment}.}

For the proof of the Einstein relation, we use a combination of the approaches from \cite{Gantert+al:2012,Guo:16,Lebowitz+Rost:94}.

\begin{lemma}	\label{Lem:max X_k^2 bound}
It holds that
\begin{equation}	\label{eq:max X_k^2 bound}
\limsup_{n \to \infty} \frac1n \E\Big[\max_{k=1,\ldots,n} X_k^2\Big] < \infty.
\end{equation}
\end{lemma}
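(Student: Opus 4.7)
The strategy is to exploit the decomposition $X_k = \psi(Y_k) + \chi(Y_k)$ from Section \ref{sec:harmonic functions} and control each piece separately. Using the elementary inequality $(a+b)^2 \leq 2a^2+2b^2$, we get
\begin{equation*}
\max_{k=1,\ldots,n} X_k^2 \leq 2 \max_{k=1,\ldots,n} \psi(Y_k)^2 + 2 \max_{k=1,\ldots,n} \chi(Y_k)^2,
\end{equation*}
so it suffices to show that each term, after division by $n$, stays bounded (in fact the second one will vanish).

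For the martingale piece, recall from Proposition \ref{Prop:corrector}(c) that the increments $\psi(Y_k)-\psi(Y_{k-1})$ are $\Prob$-a.s.\ bounded by the deterministic constant $c \defeq \Ermp[L_1]/\Ermp[C_1^{-1}]$. Thus $(\psi(Y_k))_{k \geq 0}$ is a square-integrable $\Prob$-martingale with
\begin{equation*}
\E[\psi(Y_n)^2] = \sum_{k=1}^n \E\bigl[(\psi(Y_k)-\psi(Y_{k-1}))^2\bigr] \leq c^2 n.
\end{equation*}
Doob's $L^2$-maximal inequality then gives $\E[\max_{k \leq n}\psi(Y_k)^2] \leq 4 \E[\psi(Y_n)^2] \leq 4 c^2 n$, so this contribution to $\frac1n \E[\max_{k \leq n} X_k^2]$ is bounded by a constant.

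For the corrector piece, I invoke \eqref{eq:pathwise bound on chi(Y_k)} from Lemma \ref{Lem:almost sure bounds corrector}: for a sufficiently small $\delta\in(0,\frac14)$ there is $D \in L^2(\Prob)$ with $|\chi(Y_k)| \leq D + k^{1/4+\delta}$ $\Prob$-a.s. Therefore
\begin{equation*}
\max_{k=1,\ldots,n} \chi(Y_k)^2 \leq 2 D^2 + 2 n^{1/2+2\delta},
\end{equation*}
and dividing by $n$ yields $\tfrac{2}{n}\E[D^2] + 2 n^{-1/2+2\delta} \to 0$. Combining the two bounds produces the claimed $\limsup$.

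The only subtle ingredient is the $L^2$-bound on the corrector along the trajectory, which has already been done in Lemma \ref{Lem:almost sure bounds corrector}; conditional on that lemma, the proof is essentially Doob's inequality plus orthogonality of martingale increments, and I do not expect any further obstacle.
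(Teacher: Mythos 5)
Your proof is correct, but it takes a genuinely different route from the paper. The paper does not split the maximum at all: it treats $(X_k-X_{k-1})_k$ as a stationary ergodic sequence (via Lemmas \ref{Lem:environment seen from the particle} and \ref{Lem:ergodic theory input}) and invokes the maximal inequality of Peligrad et al.\ \cite{Peligrad+al:2007}, which bounds $\E[\max_{k\le n}X_k^2]$ by $4n(1+80\delta_{n,2})^2$ with $\delta_{n,2}=\sum_{j\ge1}j^{-3/2}(\Ermp[E_\omega[X_j]^2])^{1/2}$; since $E_\omega[X_j]=E_\omega[\chi(Y_j)]$ (the $\psi$-part being a mean-zero $P_\omega$-martingale), the bound \eqref{eq:pathwise bound on chi(Y_k)} makes $\delta_{n,2}$ summable. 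You instead use the decomposition $X_k=\psi(Y_k)+\chi(Y_k)$ directly: Doob's $L^2$ maximal inequality plus orthogonality of increments handles the martingale part (the increments are indeed $\Prob$-a.s.\ bounded by $\Ermp[L_1]/\Ermp[C_1^{-1}]$ via Proposition \ref{Prop:corrector}(c), including the lazy steps where the increment is zero, and $\psi(Y_0)=0$ by construction), while the pathwise bound \eqref{eq:pathwise bound on chi(Y_k)} makes the corrector contribution $o(n)$ after taking the maximum. There is no circularity, since Lemma \ref{Lem:almost sure bounds corrector} is proved independently. Both arguments ultimately rest on the same corrector estimate; yours is more elementary and self-contained (no external maximal inequality for stationary sequences), whereas the paper's route only needs control of the quenched means $E_\omega[X_j]$ rather than a pathwise bound on $\chi(Y_k)$, which in principle is a weaker input.
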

\begin{proof}
By Lemma \ref{Lem:environment seen from the particle},
$(\overline{\omega}(n))_{n \geq 0}$ is a stationary and ergodic sequence under $\Prob$.
This sequence can be extended canonically to a two-sided stationary and ergodic sequence $(\overline{\omega}(n))_{n \in \Z}$
on the underlying space $\Omega_{\mathbf{0}}^\Z$.
The increment sequences $(X_n-X_{n-1})$ and $(Y_n - Y_{n-1})_{n \in \Z}$
also form stationary and ergodic sequences by Lemma \ref{Lem:ergodic theory input}.
Therefore, we can invoke Theorem 1 in \cite{Peligrad+al:2007} and conclude that
\begin{equation}	\label{eq:Peligrad}
\E\Big[\max_{k=1,\ldots,n} X_k^2\Big] \leq 4 n (1+80 \delta_{n,2})^2
\end{equation}
where $\delta_{n,2} = \sum_{j=1}^{n} j^{-\frac32} \big(\Ermp[E_\omega[X_j]^2]\big)^{\frac12}$.
Here, we have
\begin{equation*}	\textstyle
E_\omega[X_j]^2
= E_\omega[\psi(Y_j) + \chi(Y_j)]^2
= E_\omega[\chi(Y_j)]^2
\leq E_\omega[|\chi(Y_j)|^2].
\end{equation*}
There exists a random variable $D \in L^2(\Prob)$
such that \eqref{eq:pathwise bound on chi(Y_k)} holds for some $\delta > 0$ sufficiently small,
i.e., $|\chi(Y_j)| \leq D + j^{\frac14+\frac\delta2}$ for all $j \in \N_0$ $\Prob$-a.\,s.
Hence,
\begin{align*}
E_\omega[|\chi(Y_j)|^2]
&\leq E_\omega[|D + j^{\frac14+\frac\delta2}|^2]
\leq E_\omega[2 D^2] + 2j^{\frac12+\delta}.
\end{align*}
Consequently,
\begin{align*}
\sup_{n \geq 1} \delta_{n,2}
\leq \sum_{j\geq1} j^{-\frac32} \big(2 \E[D^2] + 2j^{\frac12+\delta}]\big)^{\frac12}
< \infty 
\end{align*}
if we pick $\delta > 0$ sufficiently small.
Consequently, \eqref{eq:max X_k^2 bound} follows from \eqref{eq:Peligrad}.
\end{proof}

\subsection{Proof of Equation \eqref{eq:speed approx by covariance}.}

The first two steps of the proof of Theorem \ref{Thm:Einstein relation} are completed.
We continue with Step 3, i.e., the proof of \eqref{eq:speed approx by covariance}.
It is based on a second order Taylor expansion for
$\sum_{j=1}^n \log p_{\omega,\lambda}(Y_{j-1},Y_{j})$ at $\lambda = 0$:
\begin{align}
&
\sum_{j=1}^n \log p_{\omega,\lambda}(Y_{j-1},Y_{j})	\notag	\\
&~=		\lambda M_n	 + \frac{\lambda^2}{2} \sum_{j=1}^n \! \Big(
\frac{p_{\omega,0}''(Y_{j-1},Y_{j})}{p_{\omega,0}(Y_{j-1},Y_{j})} - \nu_{\omega}(Y_{j-1},Y_j)^2 \!\Big)
+ \lambda^2 \sum_{j=1}^n o_{\omega,\lambda}(Y_{j-1},Y_j)	\label{eq:Taylor}
\end{align}
where $o_{\omega,\lambda}(v,w)$ tends to $0$ uniformly in $\omega \in \Omega$ and $v,w \in V$ as $\lambda \to 0$.
Set
\begin{equation*}
A_n
\defeq \frac12 \sum_{j=1}^{n} \bigg(\nu_{\omega}(Y_{j-1},Y_{j})^{2} - \frac{p_{\omega}''(Y_{j-1},Y_j)}{p_{\omega}(Y_{j-1},Y_j)}\bigg)
\end{equation*}
where we write $p_\omega$ for $p_{\omega,0}$,
and 
\begin{equation*}
R_{\lambda,n} \defeq	\lambda^2 \sum_{j=1}^{n} o_{\omega,\lambda}(Y_{j-1},Y_{j}).
\end{equation*}
Both, $A_n$ and $R_{\lambda,n}$ are random variables on $(\Omega \times V^{\N_0},\F \otimes \G)$.

\begin{lemma}	\label{Lem:2nd order term}
Let $\lambda\to 0$ and $n\to\infty$
such that $\lim_{n \to \infty} \lambda^{2}n = \alpha \geq 0$.
Then
\begin{equation}	\label{eq:A asymptotics}
\lambda^2 A_n \to \frac{\alpha}{2} \E[M(1)^2]
\quad	\Prob\text{-a.\,s.\ and in } L^1
\end{equation}
and $R_{\lambda,n} \to 0$ $\Prob$-a.\,s.
\end{lemma}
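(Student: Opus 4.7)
The two assertions are essentially independent, and I would handle them separately. For $R_{\lambda,n}$ the argument is elementary: arguing as in the paragraph following \eqref{eq:Taylor expansion}, only finitely many values of $p_{\omega,\lambda}(v,w)$ occur, so the second-order Taylor remainder satisfies $\varepsilon_{\lambda}\defeq\sup_{\omega,v,w}|o_{\omega,\lambda}(v,w)|\to 0$ as $\lambda\to 0$. Hence $|R_{\lambda,n}|\le\lambda^{2}n\,\varepsilon_{\lambda}\to\alpha\cdot 0=0$ along the prescribed double limit, without any exceptional set.

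The real task is to compute $A_{n}/n$ and then multiply by $\lambda^{2}n\to\alpha$. My plan is to realize $A_{n}$ as an ergodic sum along the environment seen from the walker. Define, for $\omega\in\Omega$ and $v\in N_{\omega}(\bfnull)$,
\[
F(\omega,v)\defeq \nu_{\omega}(\bfnull,v)^{2}-\frac{p''_{\omega}(\bfnull,v)}{p_{\omega}(\bfnull,v)},
\]
with $p''_{\omega}$ the second $\lambda$-derivative of $p_{\omega,\lambda}$ at $\lambda=0$. The function $F$ is bounded, because $p_{\omega}(\bfnull,v)\ge 1/3$ on $N_{\omega}(\bfnull)$ and only finitely many distinct transition probabilities occur. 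By the translation covariance $p_{\omega,\lambda}(v,v+w)=p_{\theta^{v}\omega,\lambda}(\bfnull,w)$ (and the analogous identities for $\nu$ and $p''$), one may rewrite
\[
A_{n}=\frac{1}{2}\sum_{j=1}^{n}F(\overline{\omega}(j-1),Y_{j}-Y_{j-1}).
\]
Setting $G(\omega)\defeq E_{\omega}[F(\omega,Y_{1})]$ and $D_{j}\defeq F(\overline{\omega}(j-1),Y_{j}-Y_{j-1})-G(\overline{\omega}(j-1))$, I would apply Lemma \ref{Lem:Birkhoff quenched} to the bounded function $G$ to obtain $n^{-1}\sum_{j=1}^{n}G(\overline{\omega}(j-1))\to\Ermp[G]$ $\Prob$-a.\,s., while $(D_{j})$ is a bounded martingale difference sequence with respect to $(\G_{j})$, so Azuma--Hoeffding together with Borel--Cantelli gives $n^{-1}\sum_{j=1}^{n}D_{j}\to 0$ $\Prob$-a.\,s. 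Combined, $A_{n}/n\to\tfrac{1}{2}\Ermp[G]$ $\Prob$-a.\,s.

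It remains to identify $\Ermp[G]=\E[M(1)^{2}]$. By the tower property and the definition of $\Prob$,
\[
\Ermp[G]=\E[\nu_{\omega}(\bfnull,Y_{1})^{2}]-\E\!\left[\frac{p''_{\omega}(\bfnull,Y_{1})}{p_{\omega}(\bfnull,Y_{1})}\right].
\]
The first summand is $\sigma_{22}=\E[M(1)^{2}]$ by Proposition \ref{Prop:martingale CLT}. The second summand equals $\Ermp[\sum_{v\in N_{\omega}(\bfnull)}p''_{\omega}(\bfnull,v)]=0$, obtained by differentiating the normalization $\sum_{v}p_{\omega,\lambda}(\bfnull,v)\equiv 1$ twice at $\lambda=0$. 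Multiplying $A_{n}/n$ by $\lambda^{2}n\to\alpha$ yields \eqref{eq:A asymptotics} almost surely, and the $L^{1}$ statement is immediate: $|\lambda^{2}A_{n}|\le\tfrac{1}{2}\lambda^{2}n\,\|F\|_{\infty}$ is uniformly bounded along the prescribed limit, so dominated convergence applies.

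The main obstacle, as I see it, is not any individual step but spotting the algebraic cancellation $\E[p''_{\omega}(\bfnull,Y_{1})/p_{\omega}(\bfnull,Y_{1})]=0$ that comes from differentiating the normalization twice; without it, the limit would not collapse to the clean quantity $\tfrac{\alpha}{2}\E[M(1)^{2}]$. The remainder is a routine combination of the quenched Birkhoff theorem for $\overline{\omega}$ with a standard SLLN for bounded martingale differences.
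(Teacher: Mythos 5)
Your proof is correct and follows the same overall architecture as the paper's: an ergodic average for $A_n/n$, the cancellation $\sum_{w\in N_{\omega}(\bfnull)}p''_{\omega}(\bfnull,w)=0$ from differentiating the normalization twice, identification of the resulting quantity with $\E[M(1)^2]$ via the covariance formula of Proposition \ref{Prop:martingale CLT}, and the uniform bound on $o_{\omega,\lambda}$ for $R_{\lambda,n}$. The one place where you genuinely deviate is the ergodic-theorem step. The paper observes that $A_n-A_{n-1}$ is a fixed measurable functional of the pair $(\overline{\omega}(n-1),\overline{\omega}(n))$ and invokes ergodicity of that pair sequence (inherited from Lemma \ref{Lem:environment seen from the particle} via the hereditary property of ergodicity under factor maps), so Birkhoff applies directly to $(A_n-A_{n-1})_{n\ge 1}$. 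You instead condition out the jump: you apply the quenched Birkhoff statement (Lemma \ref{Lem:Birkhoff quenched}) only to the single-environment function $G(\omega)=E_{\omega}[F(\omega,Y_1)]$, and control the residual $D_j=F(\overline{\omega}(j-1),Y_j-Y_{j-1})-G(\overline{\omega}(j-1))$ as a bounded martingale-difference array via Azuma--Hoeffding and Borel--Cantelli. Both routes are sound; yours buys independence from the ergodicity-of-pairs lemma at the cost of an extra (routine) concentration step, while the paper's is shorter once that lemma is in hand. Your identification of the limit is also marginally more direct: you read off $\sigma_{22}=\E[\nu_{\omega}(\bfnull,Y_1)^2]$ from Proposition \ref{Prop:martingale CLT}, whereas the paper re-derives $\E[M(1)^2]=\E[\nu_{\omega}(Y_0,Y_1)^2]$ from orthogonality of martingale increments. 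All the supporting claims you use (boundedness of $F$ via $p_{\omega}(\bfnull,v)\ge 1/3$ on $N_{\omega}(\bfnull)$, the shift covariance of $p$, $\nu$, $p''$, and the bounded-convergence argument for the $L^1$ statement) check out.
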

\begin{proof}
The convergence $R_{\lambda,n} \to 0$ follows from
the fact that $o_{\omega,\lambda}(v,w)$ tends to $0$ uniformly in $\omega \in \Omega$ and $v,w \in V$ as $\lambda \to 0$.

For the proof of \eqref{eq:A asymptotics},
notice that $A_n-A_{n-1}$ is a function of $(\overline{\omega}(n-1),\overline{\omega}(n))$.
To make this more transparent, we write
\begin{align*}
A_n&-A_{n-1} = \frac12 \bigg(\nu_{\omega}(Y_{n-1},Y_{n})^{2} - \frac{p_{\omega}''(Y_{n-1},Y_n)}{p_{\omega}(Y_{n-1},Y_n)}\bigg)	\\
&= 
\frac12 \bigg(\nu_{\overline{\omega}(n-1)}(\mathbf{0},\varphi(\overline{\omega}(n-1),\overline{\omega}(n)))^{2}
- \frac{p_{\overline{\omega}(n-1)}''(\mathbf{0},\varphi(\overline{\omega}(n-1),\overline{\omega}(n)))}
{p_{\overline{\omega}(n-1)}(\mathbf{0},\varphi(\overline{\omega}(n-1),\overline{\omega}(n)))}\bigg)
\end{align*}
with the function $\varphi$ from the proof of Lemma \ref{Lem:ergodic theory input}.
Since $(\overline{\omega}(n))_{n \in \N_0}$ is ergodic under $\Prob$,
so is $(A_n-A_{n-1})_{n \geq 1}$, see e.g.\ Lemma 5.6(c) in \cite{Axelson-Fisk+H"aggstr"om:2009b}.
Birkhoff's ergodic theorem gives
\begin{equation*}
\lim_{n \to \infty} \frac1n A_n
= \frac12 \E\bigg[\sum_{w\in N_{\omega}(\mathbf{0})} \bigg(\nu_{\omega}(\mathbf{0},w)^{2} - \frac{p_{\omega}''(\mathbf{0},w)}{p_{\omega}(\mathbf{0},w)}\bigg) p_{\omega}(\mathbf{0},w)\bigg]
\quad	\Prob \text{-a.\,s.}
\end{equation*} 
Further, for all $v$ and all $\omega$, $p_{\omega}(v,\cdot)$
is a probability measure on $N_{\omega}(v)$, hence
$\sum_{w\in N_{\omega}(v)} p_{\omega}''(v,w) = 0$.
Consequently,
\begin{equation*}
\lim_{n \to \infty} \frac1n A_n
=
\frac12 \E\bigg[\sum_{w\in N_{\omega}(\mathbf{0})} \nu_{\omega}(\mathbf{0},w)^{2} p_{\omega}(\mathbf{0},w)\bigg]
\quad	\Prob \text{-a.\,s.}
\end{equation*}
On the other hand, by Theorem \ref{Thm:joint CLT}, we have
\begin{align*}
\E[M(1)^2]
&=
\lim_{n \to \infty} \frac1n \E[M_n^2]
= \lim_{n \to \infty} \frac1n \sum_{k=1}^n \E[\nu_{\omega}(Y_{k-1},Y_{k})^2]	\\
&=
\E[\nu_{\omega}(Y_{0},Y_{1})^2]
= \E\bigg[\sum_{w\in N_{\omega}(\mathbf{0})} \nu_{\omega}(\mathbf{0},w)^{2} p_{\omega, 0}(\mathbf{0},w)\bigg],
\end{align*}
where the second equality follows from the fact that the increments of square-integrable martingales are uncorrelated
and the third equality follows from the fact that $(\nu_{\omega}(Y_{k-1},Y_{k}))_{k \in \N}$ is an ergodic sequence
under $\Prob$.
\end{proof}

\begin{proposition}	\label{Prop:3rd step}
For any $\alpha > 0$, it holds that
\begin{equation*}	\tag{\ref{eq:speed approx by covariance}}
\lim_{\substack{\lambda \to 0,\\ \lambda^2 n \to \alpha}} \frac{\E_{\lambda}[X_n]}{\lambda n}
= \E[B(1) M(1)].
\end{equation*}
\end{proposition}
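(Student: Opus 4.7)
The plan is to analyse $\E_\lambda[X_n]/(\lambda n)$ through the Radon--Nikodym derivative of $\Prob_\lambda$ with respect to $\Prob$ on $\G_n$,
\begin{equation*}
Z_{\lambda,n} \defeq \prod_{j=1}^{n} \frac{p_{\omega,\lambda}(Y_{j-1},Y_j)}{p_{\omega}(Y_{j-1},Y_j)}
= \exp\bigl(\lambda M_n - \lambda^2 A_n + R_{\lambda,n}\bigr),
\end{equation*}
where the second equality is just the Taylor expansion \eqref{eq:Taylor} together with the definitions of $A_n$ and $R_{\lambda,n}$. Using $\E_\lambda[X_n] = \E[X_n Z_{\lambda,n}]$ and $\lambda\sqrt{n} \to \sqrt{\alpha}$, the proposition reduces to showing
\begin{equation*}
\E\!\left[\frac{X_n}{\sqrt{n}}\, Z_{\lambda,n}\right] \longrightarrow \sqrt{\alpha}\, \E[B(1) M(1)]
\end{equation*}
in the regime $\lambda\to 0$, $\lambda^2 n \to \alpha$.

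First I would establish the joint convergence in distribution $(X_n/\sqrt{n},\, Z_{\lambda,n}) \Rightarrow (B(1),\, Z)$ under $\Prob$, where
\begin{equation*}
Z \defeq \exp\bigl(\sqrt{\alpha}\, M(1) - \tfrac{\alpha}{2}\E[M(1)^2]\bigr).
\end{equation*}
This combines three ingredients: Theorem \ref{Thm:joint CLT}, which gives $(X_n/\sqrt{n}, M_n/\sqrt{n}) \Rightarrow (B(1),M(1))$; Lemma \ref{Lem:2nd order term}, which yields the deterministic limit $\lambda^2 A_n \to \tfrac{\alpha}{2}\E[M(1)^2]$ and $R_{\lambda,n}\to 0$; and Slutsky plus continuous mapping (applied to $\exp$) to assemble these into the announced joint convergence.

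The main obstacle is upgrading this in-distribution convergence to convergence of the expectation, which I would handle by proving uniform integrability of $\{(X_n/\sqrt{n})\, Z_{\lambda,n}\}$ in the prescribed regime. Using H\"older's inequality with conjugate exponents slightly larger than $(2,2)$, it suffices to control $\sup_n n^{-1}\E[X_n^2]$ and $\sup \E[Z_{\lambda,n}^q]$ for some $q>1$. The first bound is provided by Lemma \ref{Lem:max X_k^2 bound}. For the second, note that $A_n$ has bounded increments (being a bounded function of the environment and the walk), so $|A_n|\leq Cn$ and hence $-\lambda^2 A_n$ stays bounded under the scaling $\lambda^2 n \to \alpha$, while $R_{\lambda,n}$ vanishes uniformly; consequently $Z_{\lambda,n}^q$ is dominated by a constant multiple of $\exp(q\lambda M_n)$. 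Since $M_n$ is a $\Prob$-martingale with bounded increments and $q\lambda\sqrt{n}$ remains bounded, Azuma--Hoeffding (cf.\ Proposition \ref{Prop:sup exp M}) supplies the required uniform bound on $\E[\exp(q\lambda M_n)]$. This is the technically delicate step because it must simultaneously couple the quadratic control on $X_n$ with exponential control on $M_n$ while tracking the scaling in $\lambda$.

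Finally, the Gaussian computation of $\E[B(1)Z]$ is routine: for a centered two-dimensional Gaussian pair $(X,Y)$ one has the standard identity
\begin{equation*}
\E[X e^{tY}] = t\,\mathrm{Cov}(X,Y)\, e^{t^2 \mathrm{Var}(Y)/2}.
\end{equation*}
Applied with $X = B(1)$, $Y = M(1)$, $t = \sqrt{\alpha}$, this gives $\E[B(1)Z] = \sqrt{\alpha}\, \E[B(1)M(1)]$. Dividing by $\sqrt{\alpha}$ produces \eqref{eq:speed approx by covariance}; in particular, the right-hand side is independent of $\alpha$, as it must be.
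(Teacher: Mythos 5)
Your proposal is correct and follows essentially the same route as the paper's proof: the Girsanov transform with the Taylor expansion \eqref{eq:Taylor}, convergence in distribution via Theorem \ref{Thm:joint CLT}, Lemma \ref{Lem:2nd order term} and Slutsky, uniform integrability via H\"older combined with Lemma \ref{Lem:max X_k^2 bound} and the exponential moment bound \eqref{eq:sup exp M}, and finally the Gaussian integration-by-parts identity. No substantive differences.
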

\begin{proof}
We follow Lebowitz and Rost \cite{Lebowitz+Rost:94}
and use the (discrete) Girsanov transform introduced in Section \ref{sec:main results}.
Indeed, using \eqref{eq:Taylor}, we get
\begin{align*}
\E_{\lambda}[X_n]
&=
\E\bigg[X_n
\exp\bigg(\sum_{j=1}^{n} \log \frac{p_{\omega,\lambda}(Y_{j-1},Y_{j})}{p_{\omega}(Y_{j-1},Y_{j})}\bigg)\bigg]	\\
&=
\E\big[X_n \exp\big(\lambda M_n - \lambda^2 A_n + R_{\lambda,n}\big)\big].
\end{align*}
Now divide by $\lambda n \sim \sqrt{\alpha n}$ and use Theorem \ref{Thm:joint CLT}, Lemma \ref{Lem:2nd order term},
Slutsky's theorem and the continuous mapping theorem to conclude that
\begin{align}	\label{eq:convergence in distribution for 3rd step}
\frac{X_n}{\lambda n}
e^{\lambda M_n - \lambda^2 A_n + R_{\lambda,n}}
\distto \frac{1}{\sqrt{\alpha}} B(1) e^{\sqrt{\alpha} M(1) - \frac\alpha2 \E[M(1)^2]}.
\end{align}
Suppose that along with convergence in distribution, convergence of the first moment holds.
Then we infer
\begin{align*}
\lim_{n \to \infty} \frac{\E_{\lambda}[X_n]}{\lambda n}
&= \frac{1}{\sqrt{\alpha}} \E\Big[B(1) \exp\Big(\sqrt{\alpha} M(1) - \frac\alpha2 \E[M(1)^2]\Big)\Big]
= \E[B(1) M(1)]
\end{align*}
where the last step follows from the integration by parts formula for two-dimensional Gaussian vectors.
It remains to show that the family on the left-hand side of \eqref{eq:convergence in distribution for 3rd step} is uniformly integrable.
To this end, use H\"older's inequality to obtain
\begin{align*}
\sup_{\lambda,n} & ~\E \bigg[\bigg|\frac{X_n}{\lambda n}
e^{\lambda M_n - \lambda^2 A_n + R_{\lambda,n}}\bigg|^{\frac65}\bigg]	\\
& \leq \sup_{\lambda,n} \E \bigg[\bigg|\frac{X_n}{\lambda n}\bigg|^{2}\bigg]^{\frac{3}{5}}
\sup_{\lambda,n} \E \Big[e^{3\lambda M_n - 3\lambda^2 A_n
+ 3 R_{\lambda,n}}\Big]^{\frac25}\!\!.
\end{align*}
By Lemma \ref{Lem:max X_k^2 bound}, the first supremum in the last line is finite.
Concerning the finiteness of the second,
notice that $\lambda^2 A_n$ and $R_{\lambda,n}$ are bounded
when $\lambda^2n$ stays bounded
(see the proof of Lemma \ref{Lem:2nd order term} for details),
whereas \eqref{eq:sup exp M} gives $\sup_{\lambda,n} \E [e^{3\lambda M_n}] < \infty$.
\end{proof}

\subsection{Regeneration points and times}	\label{subsec:regeneration times}

Given $\lambda \in (0,1]$,
define $\lambda$-dependent pre-regeneration points by:
\begin{equation*}
R^{\mathrm{pre},\lambda}_{n}= R^{\mathrm{pre}}_{n \lfloor 1/\lambda \rfloor},	\quad	n \in \Z.
\end{equation*}
The set of $\lambda$-pre-regeneration points is denoted by $\Rprelambda$.
The cluster is decomposed into independent pieces $\omega_n^\lambda \defeq [R^{\mathrm{pre},\lambda}_{n-1},R^{\mathrm{pre},\lambda}_{n})$, $n \in \Z$.
The $\lambda$-regeneration times are defined as
$\tau^{\lambda}_{0} \defeq \rho^{\lambda}_{0} \defeq 0$
and, inductively,
\begin{align*}
\tau_{n}^{\lambda}
&\defeq
\inf\{k > \tau^{\lambda}_{n-1}: Y_{k}\in \Rprelambda,
Y_{j} \neq Y_{k} \text{\;for\;all\;} j < k \;\text{and}	\\
&\hphantom{\defeq \inf\{k > \tau^{\lambda}_{n-1}: \,}
Y_j \not \in \Rprelambda \text{ for all } j \geq k \text{ with } X_j < X_k \},	\\
R^{\lambda}_{n}
&\defeq
Y_{\tau^{\lambda}_{n}}
\end{align*}
for $n \in \N$.
We further set $\rho_n^\lambda \defeq X_{\tau^{\lambda}_{n}} = \x(R^{\lambda}_{n})$.
In words, a $\lambda$-regeneration point is a $\lambda$-pre-regeneration point $R^{\mathrm{pre},\lambda}_{n}$
such that the walk after the first visit to $R^{\mathrm{pre},\lambda}_{n}$ never returns to $R^{\mathrm{pre},\lambda}_{n-1}$,
the $\lambda$-pre-regeneration point to the left.
In the context of regeneration-time arguments
it will be useful at some points to work with a different percolation law than $\Prmp$ or $\Prmp^*$,
namely, the cycle-stationary percolation law $\Prmp^\circ$,
which is defined below.

\begin{definition}	\label{def:cycle-stationary percolation law}
The \emph{cycle-stationary percolation law} $\Prmp^\circ$
is defined to be the unique probability measure on $(\Omega,\F)$ such that the
cycles $\omega_n$, $n \in \Z$ are i.i.d.\ under $\Prmp^\circ$
and such that each $\omega_n$ has the same law under $\Prmp^\circ$
as $\omega_1$ under $\Prmp^*$.
We write $\Prob^\circ_\lambda$ for $\Prob_{\Prmp^\circ,\lambda}^{\mathbf{0}}$.
\end{definition}

We define
\begin{equation*}
\H_n^{\lambda}	~\defeq~
\sigma(\tau_1^{\lambda},\ldots,\tau_n^{\lambda}, Y_0, Y_1, \ldots, Y_{\tau_n^{\lambda}},
\omega_k^\lambda: \x(R^{\mathrm{pre},\lambda}_{k}) \leq \rho_n^{\lambda}),
\end{equation*}
the $\sigma$-algebra of the walk up to time $\tau_n^{\lambda}$ and of the environment up to $\rho_n^{\lambda}$.
The distances between $\lambda$-regeneration times are not i.i.d.,
but $1$-dependent.

\begin{lemma}	\label{Lem:lambda-regeneration times and points}
For any $n \in \N$ and all measurable sets $F \in \F_{\geq} = \sigma(p_{\langle v,w \rangle}: \x(v) \wedge \x(w) \geq 0)$ and $G \in \G$,
\begin{align}
\Prob_{\lambda}((\theta^{R^{\lambda}_n} & Y_{\tau_n^{\lambda}+k})_{k \geq 0} \in G,\,\theta^{R_{n-1}^{\lambda}} \omega \in F \mid \H_{n-1}^{\lambda})	\notag	\\
&\quad= \Prob_{\lambda}^\circ((Y_{k})_{k \geq 0} \in G,\, \omega \in F \mid X_k > \x(R^{\mathrm{pre},\lambda}_{-1}) \text{ for all } k \in \N).
\label{eq:lambda regeneration times and points}
\end{align}
In particular, $((\tau_{n+1}^{\lambda}-\tau_n^{\lambda}, \rho_{n+1}^{\lambda}-\rho_{n}^{\lambda}))_{n \in \N}$
is a $1$-dependent sequence of random variables under $\Prob_{\lambda}$.
\end{lemma}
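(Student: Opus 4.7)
My plan is to combine the cyclic independence of the environment with a strong-Markov / escape-time argument for the walk. I would first unpack what $\H^\lambda_{n-1}$ records: the trajectory up to the stopping time $\tau^\lambda_{n-1}$, the position $R^\lambda_{n-1}$, and those $\lambda$-blocks $\omega^\lambda_k$ that terminate no later than $R^\lambda_{n-1}$. By Lemma~\ref{Lem:effective resistances} and its extension to $\lambda$-blocks (each being a concatenation of $\lfloor 1/\lambda \rfloor$ ordinary cycles), the $\lambda$-blocks strictly to the right of $R^\lambda_{n-1}$ form, after translation by $R^\lambda_{n-1}$, an i.i.d.\ family independent of $\H^\lambda_{n-1}$ whose common distribution coincides with that of the right-side $\lambda$-blocks under $\Prmp^\circ$. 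This already identifies the conditional law of $\theta^{R^\lambda_{n-1}} \omega$ restricted to $\F_{\geq}$ with the law of $\omega$ under $\Prmp^\circ$ restricted to $\F_{\geq}$.

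Next, I would apply the strong Markov property at $\tau^\lambda_{n-1}$. Given $\H^\lambda_{n-1}$, the walk $(Y_{\tau^\lambda_{n-1}+k})_{k \geq 0}$ is a Markov chain with kernel $p_{\omega,\lambda}$ started at $R^\lambda_{n-1}$, and coupled with the previous paragraph this shows that the shifted walk-environment pair has the joint law of $(Y_k, \omega)$ under $\Prob^\circ_\lambda$. Because each $\lambda$-pre-regeneration point sits on a cut vertex of the cluster, the future-measurable condition defining $\tau^\lambda_n$—namely $Y_j \notin \Rprelambda$ with $X_j < X_{\tau^\lambda_n}$ for all $j \geq \tau^\lambda_n$—translates in the shifted coordinates into the walk never revisiting the $\lambda$-pre-regeneration point immediately preceding $R^\lambda_n$, i.e., into the event $\{X_k > \x(R^{\mathrm{pre},\lambda}_{-1}) \text{ for all } k \in \N\}$ appearing on the right-hand side. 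Identifying $R^\lambda_n$ as the first $\lambda$-pre-regeneration point visited after $\tau^\lambda_{n-1}$ at which this escape succeeds, and normalising by the conditional probability of escape, yields \eqref{eq:lambda regeneration times and points}.

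The $1$-dependence in the final clause then follows from the identity: each pair $(\tau^\lambda_{n+1}-\tau^\lambda_n, \rho^\lambda_{n+1}-\rho^\lambda_n)$ is a functional of the shifted post-$\tau^\lambda_n$ objects, and its conditional distribution given $\H^\lambda_n$ depends on $\H^\lambda_n$ only through at most the single $\lambda$-block straddling $R^\lambda_n$ that can still be revisited between $\tau^\lambda_n$ and $\tau^\lambda_{n+1}$. The main technical obstacle is that $\tau^\lambda_n$ itself is not a stopping time for the natural filtration, since its definition quantifies over all future times. The standard remedy I would use is to introduce the increasing sequence of stopping times $\sigma_k$ at which the walk first enters a previously unvisited $\lambda$-pre-regeneration point, apply the strong Markov property at each $\sigma_k$, and combine the uniform lower bound on the escape probability provided by Lemma~\ref{Lem:hitting probabilities} with a geometric-series argument to guarantee that the first $\sigma_k$ at which escape succeeds equals $\tau^\lambda_n$, is almost surely finite, and carries the conditional law claimed.
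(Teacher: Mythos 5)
First, a caveat on the comparison: the paper does not actually prove this lemma --- it declares the proof a ``straightforward but tedious adaptation'' of Lemma~4.1 in \cite{Gantert+al:2018} and omits it --- so your sketch can only be measured against the standard regeneration argument that reference carries out. Your overall architecture is that argument: independence of the cyclic blocks of the environment, the cut-point property of $\lambda$-pre-regeneration points to translate the defining condition of $\tau_n^\lambda$ into the event $\{X_k > \x(R^{\mathrm{pre},\lambda}_{-1}) \text{ for all } k \in \N\}$, and the decomposition over the stopping times $\sigma_k$ of first fresh visits to $\Rprelambda$ (with a uniform escape-probability bound) to circumvent the fact that $\tau_n^\lambda$ is not a stopping time. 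That is the right plan, and you correctly identify the main technical obstacle.

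Two of your intermediate assertions are nevertheless false as stated, and they sit exactly where the content of the lemma lies. First, the conditional law of $\theta^{R^\lambda_{n-1}}\omega$ restricted to $\F_{\geq}$ given $\H^\lambda_{n-1}$ is \emph{not} the unconditioned block law under $\Prmp^\circ$: the $\sigma$-algebra $\H^\lambda_{n-1}$ encodes that the $(n-1)$th regeneration succeeded, and the probability of the no-backtracking event depends on the environment to the right of the regeneration point; the correct conditional law is the $\F_{\geq}$-marginal of $\Prob^\circ_\lambda(\,\cdot \mid X_k > \x(R^{\mathrm{pre},\lambda}_{-1})\ \forall k)$, which is precisely why that conditioning appears on the right-hand side of \eqref{eq:lambda regeneration times and points}. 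You cannot first ``identify the environment law'' and then run the walk; the joint law of walk and environment must be produced in one pass through the $\sigma_k$-decomposition. Second, the walk restarted at $\tau^\lambda_{n-1}$ still re-enters the $\lambda$-block whose right endpoint is $R^\lambda_{n-1}$ --- the regeneration condition forbids only reaching that block's left endpoint --- and that block is frozen inside $\H^\lambda_{n-1}$; hence the conditional law of $(Y_{\tau^\lambda_{n-1}+k})_{k\geq 0}$ given $\H^\lambda_{n-1}$ genuinely depends on $\H^\lambda_{n-1}$ and cannot equal a fixed measure. This is exactly why the lemma conditions on $\H^\lambda_{n-1}$ but restarts the walk only at the \emph{next} regeneration time $\tau_n^\lambda$, so that the block to the left of the new starting point is fresh, and it is this same shared block that makes the increment sequence merely $1$-dependent rather than i.i.d. Your third paragraph identifies the shared block correctly, but that observation contradicts your second paragraph's claim of an exact identity for the walk restarted at $\tau^\lambda_{n-1}$. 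The $\sigma_k$-machinery in your final paragraph can deliver the correct statement, but only if it is run with this index offset built in.
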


Since Lemma \ref{Lem:lambda-regeneration times and points} is a natural observation and its proof is a rather straightforward
but tedious adaption of the proof of Lemma 4.1 in \cite{Gantert+al:2018}, we omit the details of the proof.

The subsequent lemma provides the key estimate for the distances between $\lambda$-regeneration points.

\begin{lemma}	\label{Lem:uniform exponential regeneration point estimates}
There exist finite constants $C,\varepsilon>0$ depending only on $p$ such that, for every sufficiently small $\lambda > 0$,
\begin{equation}	\label{eq:uniform exponential regeneration point estimates}
\Prob_{\lambda}(\rho_{2}^\lambda-\rho_1^\lambda \geq n)	~\leq~ C e^{- \lambda \varepsilon n}	\quad	\text{for all } n \in \N_0.
\end{equation}
In particular,
\begin{equation}	\label{eq:lambda^2 E_lambda X_tau_1^2<infty}
\limsup_{\lambda \to 0} \lambda^2 \E_{\lambda}[(\rho_1^\lambda)^2] < \infty,
\qquad
\limsup_{\lambda \to 0} \lambda^2 \E_{\lambda}[(\rho_2^\lambda-\rho_1^\lambda)^2] < \infty,
\end{equation}
and
\begin{equation}	\label{eq:asymptotics after Cauchy-Schwarz trick}
\limsup_{\lambda \to 0} \lambda^2 \sum_{n \geq 1} n \Prob_\lambda(\rho_2^\lambda-\rho_1^\lambda \geq n)^{\frac12} < \infty.
\end{equation}
\end{lemma}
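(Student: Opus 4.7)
The plan is to establish first the exponential tail bound \eqref{eq:uniform exponential regeneration point estimates} and then deduce the three moment-type estimates \eqref{eq:lambda^2 E_lambda X_tau_1^2<infty}--\eqref{eq:asymptotics after Cauchy-Schwarz trick} from it by elementary integral bounds: from $\Prob_{\lambda}(X \geq n) \leq C e^{-\lambda\varepsilon n}$ uniformly in small $\lambda$, one has
\begin{equation*}
\lambda^2 \E_{\lambda}[X^2] = 2\lambda^2 \int_0^\infty t\, \Prob_{\lambda}(X \geq t)\, \dt \leq \frac{2C}{\varepsilon^2},
\end{equation*}
and likewise $\lambda^2 \sum_{n \geq 1} n\, \Prob_{\lambda}(X \geq n)^{1/2} = O(1)$ by the same type of estimate. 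The analogue for $\rho_1^{\lambda}$ will follow once the analogous tail bound for $\rho_1^{\lambda}$ is proved; this is treated by a small modification of the main argument.

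For the tail bound I would decompose $\rho_2^{\lambda} - \rho_1^{\lambda}$ according to the number of $\lambda$-pre-regeneration points visited between the two regeneration epochs. Writing $R_1^{\lambda} = R^{\mathrm{pre},\lambda}_{k_0}$, let $T_j$ denote the first hitting time of $R^{\mathrm{pre},\lambda}_{k_0+j}$ and set
\begin{equation*}
N \defeq \min\big\{j \geq 1: Y_k \neq R^{\mathrm{pre},\lambda}_{k_0+j-1}\text{ for all } k \geq T_j\big\}.
\end{equation*}
Because the graph structure forces any $\lambda$-pre-regeneration point strictly to the left of $R^{\mathrm{pre},\lambda}_{k_0+j}$ to be reached only via $R^{\mathrm{pre},\lambda}_{k_0+j-1}$, this $N$ coincides with the index for which $R_2^{\lambda} = R^{\mathrm{pre},\lambda}_{k_0+N}$, and consequently
\begin{equation*}
\rho_2^{\lambda} - \rho_1^{\lambda} = \sum_{j=1}^{N} L^{\lambda}_{k_0+j}, \qquad L^{\lambda}_k \defeq \x(R^{\mathrm{pre},\lambda}_k) - \x(R^{\mathrm{pre},\lambda}_{k-1}),
\end{equation*}
where each $L^{\lambda}_k$ is a sum of $\lfloor 1/\lambda\rfloor$ consecutive block-lengths $L_i$.

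Two ingredients then drive the bound. First, a \emph{quenched geometric bound} on $N$: since $L_i \geq 1$, consecutive $\lambda$-pre-regeneration points are at $\x$-distance at least $\lfloor 1/\lambda \rfloor$, so \eqref{eq:comparewalks:3} applies with $L = 1$ and $R = \infty$; iterating the strong Markov property at the first hitting times $T_j$ yields $P_{\omega,\lambda}(N \geq j) \leq (2/5)^{j-1}$ uniformly in $\omega$ and in all sufficiently small $\lambda > 0$. Second, an \emph{exponential moment bound} on $L^{\lambda}_k$: by Lemma \ref{Lem:effective resistances}, $\Ermp[e^{\vartheta L_1}] < \infty$ for some $\vartheta > 0$, and a Taylor expansion yields, for $\vartheta' > 0$ small and $\lambda$ small enough,
\begin{equation*}
\Ermp[e^{\vartheta' \lambda L^{\lambda}_k}] = \Ermp[e^{\vartheta' \lambda L_1}]^{\lfloor 1/\lambda\rfloor} \leq \exp(2\vartheta'\, \Ermp[L_1]),
\end{equation*}
with the $L^{\lambda}_k$ independent across distinct blocks by the same lemma. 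Conditioning on $\omega$ and combining both ingredients, one finds
\begin{equation*}
\E_{\lambda}\big[e^{\vartheta' \lambda (\rho_2^{\lambda} - \rho_1^{\lambda})}\big] \leq \sum_{j \geq 1} (2/5)^{j-1} \exp(2\vartheta' j\, \Ermp[L_1]),
\end{equation*}
which is a convergent geometric series as soon as $\vartheta'$ is so small that $(2/5)\exp(2\vartheta' \Ermp[L_1]) < 1$. Markov's inequality then delivers \eqref{eq:uniform exponential regeneration point estimates} with $\varepsilon = \vartheta'$. The bound for $\rho_1^{\lambda}$ follows from the same scheme, with only cosmetic adjustments for the $\lambda$-block containing the origin, whose length is again exponentially tight by Lemma \ref{Lem:effective resistances}.

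The main obstacle is the first ingredient above: cleanly carrying out the iterative strong Markov argument to obtain the quenched geometric tail of $N$, which hinges on recognising that non-regeneration at the $j$-th hit of a new rightmost $\lambda$-pre-regeneration point is equivalent to a subsequent return to its immediate predecessor. The passage from $\rho_2^{\lambda} - \rho_1^{\lambda}$ to the stated decomposition, carried out through Lemma \ref{Lem:lambda-regeneration times and points} and the cycle-stationary law $\Prmp^{\circ}$, is notationally heavy but essentially mechanical.
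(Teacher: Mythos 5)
Your overall architecture (decompose $\rho_2^\lambda-\rho_1^\lambda$ into the lengths of the $N$ $\lambda$-blocks crossed before regeneration, get a geometric tail for $N$ and uniform exponential moments for $\vartheta'\lambda L^\lambda_k$, combine, and then deduce the three moment bounds by elementary integration) is reasonable, and the reductions at the beginning and end are fine. But the step you yourself flag as the ``main obstacle'' is a genuine gap, and in fact the bound you assert there is false. The events $A_i\defeq\{$the walk returns to $R^{\mathrm{pre},\lambda}_{k_0+i-1}$ at some time after $T_i\}$ are \emph{not} measurable with respect to the stopped $\sigma$-algebra at $T_{i+1}$ (the return certifying $A_i$ may occur long after $T_{i+1}$, $T_{i+2}$, \dots), so ``iterating the strong Markov property at the $T_j$'' does not give $P_{\omega,\lambda}(N\geq j)=P_{\omega,\lambda}(A_1\cap\dots\cap A_{j-1})\leq (2/5)^{j-1}$. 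A sanity check on the toy model already kills the claimed constant: for a nearest-neighbour walk on $\Z$ with one-step left probability $q=2/7$ (so that the return probability from a site to its left neighbour is exactly $q/(1-q)=2/5$, matching \eqref{eq:comparewalks:3}), a direct computation splitting on whether $0$ is hit before $2$ gives $P(A_1\cap A_2)=2q^2/p=8/35>(2/5)^2$. So $N$ does have exponential tails, but with a worse rate, and establishing this is precisely the non-trivial part: the index of the first ``fresh point from which the walk never backtracks'' is not a stopping time, and one needs the standard Kesten-style recursive regeneration construction. This is what the paper does: it passes to the agile sequence $(Z_k)$ of indices of visited $\lambda$-pre-regeneration points, uses \eqref{eq:comparewalks:3} only to bound each one-step left-transition of $(Z_k)$ by $4/10$, couples $(Z_k)$ with a biased nearest-neighbour walk on $\Z$, and invokes Lemma \ref{Lem:aux result biased RW Z} (proved via the recursive construction and the Gambler's ruin formula) for the exponential tail of the first regeneration index $\varrho$. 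You would need to import that lemma (or reprove it) at exactly this point; with it, your exponential-moment bookkeeping for $\sum_{j\le N}L^\lambda_{k_0+j}$ does go through, since the tail bound on $N$ obtained this way is uniform in $\omega$ and the $L^\lambda_k$ are independent across blocks.

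Two smaller remarks. First, your combination step $\E_\lambda[e^{\vartheta'\lambda(\rho_2^\lambda-\rho_1^\lambda)}]\leq\sum_j c^{j-1}\Ermp[e^{\vartheta'\lambda\sum_{i\leq j}L^\lambda_i}]$ needs the tail bound on $N$ to hold \emph{quenched, uniformly in $\omega$} in order to decouple $\{N\geq j\}$ from the block lengths; make this explicit, because an annealed bound on $N$ would not suffice here. Second, the paper's route to \eqref{eq:uniform exponential regeneration point estimates} is structurally different from yours: instead of exponential moments, it splits $\{\rho_1^\lambda\geq n\}$ (under the cycle-stationary law, after reducing via Lemma \ref{Lem:lambda-regeneration times and points} and the bound $\Prob^\circ_\lambda(C)\geq 1/3$) into $\{\x(R^{\mathrm{pre},\lambda}_{\lfloor\varepsilon\lambda n\rfloor})>n\}$, handled by the Cram\'er-type estimate of Lemma \ref{Lem:LDP upper bound for lambda-pre-regeneration points}, and $\{\varrho^*\geq\lfloor\varepsilon\lambda n\rfloor\}$, handled by the coupling just described. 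Your exponential-moment version is a legitimate alternative, but only once the tail of $N$ is correctly established.
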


For the proof, we require the following lemma.

\begin{lemma}	\label{Lem:LDP upper bound for lambda-pre-regeneration points}
There exist finite constants $\varepsilon=\varepsilon(p)>0$, $c_*=c_*(p) > 0$ such that, for all $x \geq 0$,
\begin{align*}
\Prmp^\circ(\x(R^{\mathrm{pre}}_{\lfloor \varepsilon x\rfloor }) > x)
\leq e^{-c_* x}.
\end{align*}
\end{lemma}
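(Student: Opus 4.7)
The plan is to recognize the inequality as a standard Cramér/Chernoff large deviation bound for an i.i.d.\ sum with finite exponential moments, applied to the cycle lengths $L_n$. The whole point of introducing the cycle-stationary law $\Prmp^\circ$ in Definition \ref{def:cycle-stationary percolation law} is precisely to turn the dependent sequence $(L_n)_{n \in \Z}$ (under $\Prmp$) into a genuine i.i.d.\ sequence, which makes classical tools immediately available.

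First, I would unpack what $\Prmp^\circ$ gives: under it the cycles $(\omega_n)_{n \in \Z}$ are i.i.d.\ with common law equal to that of $\omega_1$ under $\Prmp^*$, hence the lengths $L_n = \x(R^{\mathrm{pre}}_n) - \x(R^{\mathrm{pre}}_{n-1})$, $n \geq 1$, form an i.i.d.\ sequence whose law coincides with that of $L_1$ under $\Prmp$. Lemma \ref{Lem:effective resistances} then supplies $\vartheta > 0$ with $\Lambda(\vartheta) \defeq \log \Ermp[e^{\vartheta L_1}] < \infty$. Writing $\x(R^{\mathrm{pre}}_m) = \x(R^{\mathrm{pre}}_0) + \sum_{k=1}^m L_k$ and noting that $\x(R^{\mathrm{pre}}_0)$ is bounded above by the length of the cycle $\omega_0$ (also with exponential tails under $\Prmp^\circ$), the problem reduces to controlling the tail of the i.i.d.\ sum.

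Second, I would apply the exponential Markov inequality at level $x/2$. For any $\theta \in (0,\vartheta]$ and $m = \lfloor \varepsilon x \rfloor$,
\begin{equation*}
\Prmp^\circ\!\Bigl(\textstyle\sum_{k=1}^m L_k > x/2\Bigr)
\,\leq\, e^{-\theta x/2}\,\Ermp[e^{\theta L_1}]^m
\,\leq\, \exp\bigl(-x\bigl(\theta/2 - \varepsilon \Lambda(\theta)\bigr)\bigr).
\end{equation*}
Fixing $\theta \defeq \vartheta$ and then choosing $\varepsilon = \varepsilon(p) > 0$ small enough that $\varepsilon \Lambda(\vartheta) \leq \vartheta/4$ makes this bound at most $e^{-\vartheta x/4}$. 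Combining this with an analogous exponential estimate for the tail $\{\x(R^{\mathrm{pre}}_0) > x/2\}$ yields the desired conclusion with $c_* = \vartheta/4$ (adjusted by a harmless constant to absorb the two contributions).

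There is no real obstacle here: the only substantive fact is the exponential moment for $L_1$ already provided by Lemma \ref{Lem:effective resistances}, and the rest is a one-line Chernoff computation. The only thing to be careful about is the order of quantifiers: $\varepsilon$ and $c_*$ must be chosen depending only on $p$ (equivalently, only on the moment generating function of $L_1$), and independently of $x$, which is automatic from the derivation above.
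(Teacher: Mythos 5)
Your proof is correct and follows essentially the same route as the paper: both reduce the claim to a Chernoff/exponential-Markov bound for the i.i.d.\ sum of cycle lengths under $\Prmp^\circ$, using the finite exponential moment of $L_1$ (the paper takes it from Lemma 3.3(b) of the companion paper, you from Lemma \ref{Lem:effective resistances}), and then choose $\varepsilon$ and the tilting parameter so that the exponent is strictly negative. The only cosmetic differences are the order in which the two constants are fixed (the paper first takes $\varepsilon<\mu(p)^{-1}$ and then a small tilt $u$, you fix the tilt $\vartheta$ and then shrink $\varepsilon$) and your separate treatment of the boundary term $\x(R^{\mathrm{pre}}_0)$, which under the cycle-stationary law is not actually needed but is harmless.
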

\begin{proof}
It follows from Lemma 3.3(b) of \cite{Gantert+al:2018} that there exists a constant $c(p) \in (0,1)$
depending only on $p$ such that
\begin{equation*}
\Prmp^\circ(\x(R_1^{\mathrm{pre}}) > m) \leq c(p)^m
\end{equation*}
for all $m \in \N_0$. Hence, the moment generating function $\vartheta \mapsto \Ermp^\circ[e^{\vartheta \x(R_1^{\mathrm{pre}})}]$
is finite in some open interval containing the origin,
in particular, $\x(R_1^{\mathrm{pre}})$ has positive finite mean $\mu(p)$ (depending only on $p$).
Let $\varepsilon \in (0,\mu(p)^{-1})$.
Then, for some sufficiently small $u>0$,
\begin{equation*}
1 > \Ermp^\circ[e^{u \x(R_1^{\mathrm{pre}})}] e^{-u \varepsilon^{-1}} \eqdef e^{-c_* \varepsilon^{-1}}.
\end{equation*}
Fix $x \geq 0$.
Since the $\omega_n$, $n \in \N$ are i.i.d.\ under $\Prmp^\circ$,
$\x(R^{\mathrm{pre}}_{\lfloor \varepsilon x \rfloor})$ is the sum of $\lfloor \varepsilon x \rfloor$
i.i.d.\ random variables each having the same law as $\x(R_1^{\mathrm{pre}})$ under $\Prmp^\circ$.
Consequently, Markov's inequality gives
\begin{align*}
\Prmp^\circ(\x(R^{\mathrm{pre}}_{\lfloor \varepsilon x\rfloor }) > x)
&\leq \Ermp^\circ[e^{u \x(R_1^{\mathrm{pre}})}]^{\lfloor \varepsilon x \rfloor} e^{-ux}
\leq \Ermp^\circ[e^{u \x(R_1^{\mathrm{pre}})}]^{\varepsilon x} e^{-ux}
\leq e^{-c_* x}.
\end{align*}
\end{proof}

\begin{proof}[of Lemma \ref{Lem:uniform exponential regeneration point estimates}]
We first derive \eqref{eq:lambda^2 E_lambda X_tau_1^2<infty} and \eqref{eq:asymptotics after Cauchy-Schwarz trick}
from \eqref{eq:uniform exponential regeneration point estimates}.
We only prove the second relation of \eqref{eq:lambda^2 E_lambda X_tau_1^2<infty}.
For $\lambda>0$, summation by parts and \eqref{eq:uniform exponential regeneration point estimates} give
\begin{align*}
\lambda^2 \E_{\lambda}[(\rho_2^\lambda-\rho_1^\lambda)^2]
&= \lambda^2 \sum_{n \geq 0} (2n+1) \Prob_\lambda(\rho_2^\lambda-\rho_1^\lambda > n)
\leq C \lambda^2 \sum_{n \geq 0} (2n+1) e^{- \lambda \varepsilon n},
\end{align*}
which remains bounded as $\lambda \downarrow 0$. Analogously,
\begin{align*}
\lambda^2 & \sum_{n \geq 1} n \Prob_\lambda(\rho_2^\lambda-\rho_1^\lambda \geq n)^{\frac12}
\leq C^{\frac12} \lambda^2 \sum_{n \geq 0} n e^{-\lambda \varepsilon n/2},
\end{align*}
which again remains bounded as $\lambda \to 0$ and thus gives \eqref{eq:asymptotics after Cauchy-Schwarz trick}.

We now turn to the proof of \eqref{eq:uniform exponential regeneration point estimates}.
By Lemma \ref{Lem:lambda-regeneration times and points}
the law of $\rho_{2}^\lambda-\rho_1^\lambda$ under $\Prob_\lambda$
is the same as the law of $\rho_1^\lambda$ under $\Prob_\lambda^\circ$
given that $(Y_n)_{n \geq 0}$ never visits $R^{\mathrm{pre},\lambda}_{-1}$:
\begin{equation}
\Prob_\lambda(\rho_{2}^\lambda-\rho_1^\lambda \in \cdot)
= \Prob_\lambda^\circ(\rho_1^\lambda \in \cdot \, | \, X_k > R^{\mathrm{pre},\lambda}_{-1} \text{ for all } k \in \N).
\end{equation}
Let $C \defeq \{X_k > \x(R^{\mathrm{pre},\lambda}_{-1}) \text{ for all } k \in \N\}$.
In order for $C^\comp$ to occur, the walk $(Y_n)_{n \in \N_0}$ must travel at least $1/\lambda$ steps to the left on the backbone
as the distance of $\mathbf{0} = R^{\mathrm{pre},\lambda}_{0}$ and $R^{\mathrm{pre},\lambda}_{-1}$
is at least $1/\lambda$.
From Lemma 6.3 in \cite{Gantert+al:2018}, we thus conclude that
\begin{equation}	\label{eq:lower bound regeneration}
\Prob_\lambda^\circ(C^\comp) \leq \frac{2(e^{2\lambda}-1)}{e^\lambda-1} \frac{1}{e^2-1}.
\end{equation}
As $\lambda \to 0$, the bound on the right-hand side tends to $4/(e^2-1) =  0.626\ldots$.
Hence, for all sufficiently small $\lambda > 0$, we have $\Prob_\lambda^\circ(C^\comp) \leq 2/3$,
and therefore, $\Prob_\lambda^\circ(C) \geq 1/3$.
Fix such a $\lambda$. Then
\begin{equation}
\Prob_\lambda^\circ(\rho_1^\lambda \in \cdot | C)
= \Prob_\lambda^\circ(C)^{-1} \Prob_\lambda^\circ(\rho_1^\lambda \in \cdot, C)
\leq 3 \Prob_\lambda^\circ(\rho_1^\lambda \in \cdot, C)
\leq 3 \Prob_\lambda^\circ(\rho_1^\lambda \in \cdot)
\end{equation}
and it thus remains to bound $\Prob_\lambda^\circ(\rho_1^\lambda \geq n)$ for $n \in \N_0$.

The basic idea is that $\rho_1^\lambda \geq n$ if
either there are unusually few $\lambda$-pre-regeneration points in $[0,n]$
or the walk $(Y_k)_{k \in \N_0}$ has to make too many excursions of length at least $\lfloor \frac1\lambda\rfloor$ to the left.
To turn this idea into a rigorous proof, we first observe that for $\varepsilon = \varepsilon(p)>0$ from Lemma \ref{Lem:LDP upper bound for lambda-pre-regeneration points},
we have
\begin{equation}	\label{eq:basic decomposition tails rho}
\Prob_\lambda^\circ(\rho_1^\lambda \geq n)
\leq \Prob_\lambda^\circ(\x(R^{\mathrm{pre},\lambda}_{\lfloor \varepsilon \lambda n \rfloor}) > n)
+ \Prob_\lambda^\circ(\rho_1^\lambda \geq  \x(R^{\mathrm{pre},\lambda}_{\lfloor \varepsilon \lambda n \rfloor})).
\end{equation}
The first probability on the right-hand side of \eqref{eq:basic decomposition tails rho} is bounded by
\begin{align*}
\Prob_\lambda^\circ(\x(R^{\mathrm{pre},\lambda}_{\lfloor \varepsilon \lambda n \rfloor}) > n)
&= \Prmp^\circ(\x(R^{\mathrm{pre},\lambda}_{\lfloor \varepsilon \lambda n \rfloor}) > n)
= \Prmp^\circ(\x(R^{\mathrm{pre}}_{\lfloor \varepsilon \lambda n\rfloor \cdot \lfloor \frac1\lambda\rfloor}) > n)		\\
&\leq \Prmp^\circ(\x(R^{\mathrm{pre}}_{\lfloor \varepsilon n\rfloor }) > n) \leq e^{-c_* n}	
\end{align*}
where we have used the elementary inequality $\lfloor a \rfloor \cdot \lfloor b \rfloor \leq \lfloor ab \rfloor $ for all $a,b > 0$
and then Lemma \ref{Lem:LDP upper bound for lambda-pre-regeneration points}.

We now turn to the second probability on the right-hand side of \eqref{eq:basic decomposition tails rho}.
Observe that a $\lambda$-pre-regeneration point
$R^{\mathrm{pre},\lambda}_{i}$ is a $\lambda$-regeneration point
iff after the first visit to it, the random walk $(Y_k)_{k \geq 0}$ never returns to $R^{\mathrm{pre},\lambda}_{i-1}$.
We define $Z_0',Z_1',Z_2',\ldots$ to be the sequence of indices of the $\lambda$-pre-regeneration points visited by $(Y_k)_{k \geq 0}$
in chronological order, i.e., $Z_j'=i$ if the $j$th visit of $(Y_k)_{k \geq 0}$ to $\Rprelambda$
is at the point $R^{\mathrm{pre},\lambda}_{i}$.
We then define $Z_0,Z_1,Z_2,\ldots$ to be the corresponding agile sequence,
that is, each multiple consecutive occurrence of a number in the string
is reduced to a single occurrence.  
For instance, if
\begin{equation*}
(Z_0',Z_1',Z_2',Z_3',Z_4',Z_5',Z_6',\ldots) = (0,-1,0,0,0,1,2,\ldots),
\end{equation*}
then
\begin{equation*}
(Z_0,Z_1,Z_2,Z_3,Z_4,\ldots) = (0,-1,0,1,2,\ldots).
\end{equation*}
Then $R^{\mathrm{pre},\lambda}_{i}$ is a $\lambda$-regeneration point if for the first $j \in \N$ with $Z_j = i$,
we have $Z_{k} \geq i$ for all $k \geq j$.
Let
\begin{equation*}
\varrho^* \defeq \inf\{Z_k: k \in \N \text{ with } Z_j < Z_k \leq Z_l \text{ for all } 0 \leq j < k \leq l\}.
\end{equation*}
Then
\begin{equation*}
\Prob_\lambda^\circ(\rho_1^\lambda \geq  \x(R^{\mathrm{pre},\lambda}_{\lfloor \varepsilon \lambda n \rfloor}))
= \Prob_\lambda^\circ(\varrho^* \geq \lfloor\varepsilon \lambda n\rfloor).
\end{equation*}
We compare the latter probability with the corresponding probability for a biased nearest-neighbor random walk on $\Z$
which at any vertex is more likely to move left than the walk $(Z_k)_{k \in \N_0}$.
More precisely, we may assume without loss of generality
that on the underlying probability space there exists a  biased nearest-neighbor random walk on $\Z$
which we denote by $(S_k)_{k \in \N_0}$
such that $\Prob_\lambda^\circ(S_0 = 0)=1$ and
\begin{equation*}
\tfrac6{10} \defeq \Prob_\lambda^\circ(S_{k+1} = j+1 \mid S_k = j) = 1 - \Prob_\lambda^\circ(S_{k+1} = j-1 \mid S_k = j).
\end{equation*}
According to \eqref{eq:comparewalks:3}, we have
\begin{equation*}
\Prob_\lambda^\circ(Z_{k+1} = j-1 \mid Z_k = j) \leq \tfrac4{10}
\end{equation*}
for $\lambda > 0$ sufficiently small.
This means that we may couple the walks $(S_k)_{k \in \N_0}$ and $(Z_k)_{k \in \N_0}$
such that $\{Z_k-Z_{k-1} = -1\} \subseteq \{S_k-S_{k-1} = -1\}$ for all $k \in \N$.
Define
\begin{equation*}
\varrho \defeq \inf\{S_k: k \in \N \text{ with } S_i < S_k \leq S_j \text{ for all } 0 \leq i < k \leq j\}.
\end{equation*}
A moment's thought reveals that $\varrho \geq \varrho^*$ and hence, for every $n \in \N_0$, by Lemma \ref{Lem:aux result biased RW Z},
\begin{equation*}
\Prob_\lambda^\circ(\varrho^* \geq \lfloor\varepsilon \lambda n\rfloor) \leq \Prob_\lambda^\circ(\varrho \geq \lfloor\varepsilon \lambda n\rfloor)
\leq C^* e^{-c^* \lfloor\varepsilon \lambda n\rfloor} \leq C^* e^{c^*} e^{-c^* \varepsilon \lambda n}.
\end{equation*}
This completes the proof of \eqref{eq:uniform exponential regeneration point estimates}.
\end{proof}

\begin{lemma}	\label{lem:boundtau}
We have
\begin{equation}\label{eq:lem:boundtau1}
\limsup_{\lambda\to 0} \lambda^{4} \E_{\lambda}[(\tau_{1}^{\lambda})^{2}]<\infty \mbox{ and }\limsup_{\lambda\to 0} \lambda^{4} \E_{\lambda}[(\tau_{2}^{\lambda}-\tau_{1}^{\lambda})^{2}]<\infty,
\end{equation}
and
\begin{equation}\label{eq:lem:lowerboundtau1}
\liminf_{\lambda\to 0} \lambda^{2} \E_{\lambda}[\tau_{1}^{\lambda}]>0 \mbox{ and } \liminf_{\lambda\to 0} \lambda^{2} \E_{\lambda}[\tau_{2}^{\lambda}-\tau_{1}^{\lambda}]>0.
\end{equation}
As a consequence,
\begin{equation}\label{eq:lem:boundtau2}
\limsup_{\lambda\to 0} \lambda^{2} \E_{\lambda}[\tau_{1}^{\lambda}]<\infty \mbox{ and }
\limsup_{\lambda\to 0} \lambda^{2} \E_{\lambda}[\tau_{2}^{\lambda}-\tau_{1}^{\lambda}]<\infty,
\end{equation}
and 
\begin{equation}\label{eq:lem:boundtau3}
\limsup_{\lambda\to 0} \frac{ \E_{\lambda}[(\tau_{1}^{\lambda})^{2}]}{\E_{\lambda}[\tau_{1}^{\lambda}]^{2}}<\infty \mbox{ and }
\limsup_{\lambda\to 0} \frac{ \E_{\lambda}[(\tau_{2}^{\lambda}-\tau_{1}^{\lambda})^{2}]}{\E_{\lambda}[\tau_{2}^{\lambda}-\tau_{1}^{\lambda}]^{2}}<\infty.
\end{equation}
\end{lemma}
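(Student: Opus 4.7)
For the lower bound in \eqref{eq:lem:lowerboundtau1}, I would use that the one-step drift of $X_n$ under $P_{\omega,\lambda}$ is uniformly $O(\lambda)$. Indeed, from \eqref{eq:P_xi lazy} one reads off
\begin{equation*}
\bigl|E_{\omega,\lambda}[X_{n+1}-X_n\mid\G_n]\bigr|\leq\frac{e^\lambda-e^{-\lambda}}{e^\lambda+1+e^{-\lambda}}\leq c_0\lambda
\end{equation*}
for some absolute constant $c_0$, uniformly in $\omega$ and in the current position. Consequently the process $X_n-\sum_{k=1}^n E_{\omega,\lambda}[X_k-X_{k-1}\mid\G_{k-1}]$ is a bounded-increment martingale, and optional stopping at $\tau_1^\lambda\wedge N$ followed by $N\to\infty$ (we may assume $\E_\lambda[\tau_1^\lambda]<\infty$, otherwise the claim is trivial) yields $\E_\lambda[\rho_1^\lambda]\leq c_0\lambda\,\E_\lambda[\tau_1^\lambda]$. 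Since $\rho_1^\lambda\geq\lfloor 1/\lambda\rfloor$ by construction, one deduces $\E_\lambda[\tau_1^\lambda]\geq c/\lambda^2$ for all sufficiently small $\lambda$. The corresponding bound for $\tau_2^\lambda-\tau_1^\lambda$ is obtained by the same reasoning after conditioning on $\H_1^\lambda$ and rewriting the conditional law via Lemma \ref{Lem:lambda-regeneration times and points}.

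The upper bound \eqref{eq:lem:boundtau1} is the main difficulty. I would decompose $\tau_1^\lambda=\sum_{i=1}^N\Delta_i$, where $\Delta_i$ denotes the time between the $(i-1)$th and $i$th visits of $(Y_k)$ to distinct $\lambda$-pre-regeneration points, and $N$ is the number of $\lambda$-pre-regeneration points visited until the first actual $\lambda$-regeneration. The comparison of left-excursions with a biased simple random walk on $\Z$ used in the proof of Lemma \ref{Lem:uniform exponential regeneration point estimates} shows that $N$ has geometric tails with a parameter bounded away from $1$ uniformly in small $\lambda$, so $\E_\lambda[N^p]$ is bounded in $\lambda$ for every $p\geq 1$. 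For $\Delta_i$ I would invoke that $(Y_n)$ under $P_{\omega,\lambda}$ is reversible with stationary weights $\pi_\lambda(v)=e^{2\lambda\x(v)}$ and edge conductances $c_\lambda(\langle v,w\rangle)\propto\omega(\langle v,w\rangle)e^{\lambda(\x(v)+\x(w))}$. The commute-time identity applied to the finite block between two consecutive $\lambda$-pre-regeneration points expresses $\E_\lambda[\Delta_i]$ as the product of the total $\pi_\lambda$-mass of the block and the $\lambda$-effective resistance across it; using that each block has typical $\x$-length $\sim 1/\lambda$ together with the exponential moments of $C_n,L_n$ from Lemma \ref{Lem:effective resistances} and series/parallel estimates as in Lemma \ref{Lem:hitting probabilities}, both factors are of order $1/\lambda$ in expectation, giving $\E_\lambda[\Delta_i]=O(1/\lambda^2)$. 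A standard exponential tail estimate for hitting times on reversible finite networks (bootstrapped from the first-moment bound via the strong Markov property) then upgrades this to $\E_\lambda[\Delta_i^2]=O(1/\lambda^4)$. Combining with the uniform moment bounds on $N$ through Cauchy--Schwarz produces $\E_\lambda[(\tau_1^\lambda)^2]=O(1/\lambda^4)$, and the 1-dependence in Lemma \ref{Lem:lambda-regeneration times and points} transfers the estimate to $\tau_2^\lambda-\tau_1^\lambda$. The main obstacle throughout is the simultaneous uniform-in-$\lambda$ control of the two factors in the commute-time identity and the passage from first to second moments of $\Delta_i$.

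The remaining statements \eqref{eq:lem:boundtau2} and \eqref{eq:lem:boundtau3} are now immediate. The Cauchy--Schwarz inequality gives $(\lambda^2\E_\lambda[\tau_1^\lambda])^2\leq\lambda^4\E_\lambda[(\tau_1^\lambda)^2]$, so \eqref{eq:lem:boundtau1} implies \eqref{eq:lem:boundtau2}; and
\begin{equation*}
\frac{\E_\lambda[(\tau_1^\lambda)^2]}{\E_\lambda[\tau_1^\lambda]^2}=\frac{\lambda^4\E_\lambda[(\tau_1^\lambda)^2]}{\bigl(\lambda^2\E_\lambda[\tau_1^\lambda]\bigr)^2},
\end{equation*}
combined with the upper bound from \eqref{eq:lem:boundtau1} and the strictly positive lower bound from \eqref{eq:lem:lowerboundtau1}, yields \eqref{eq:lem:boundtau3}; the argument for $\tau_2^\lambda-\tau_1^\lambda$ is identical.
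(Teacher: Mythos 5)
Your closing paragraph (deducing \eqref{eq:lem:boundtau2} and \eqref{eq:lem:boundtau3} from \eqref{eq:lem:boundtau1} and \eqref{eq:lem:lowerboundtau1} via Jensen/Cauchy--Schwarz) is correct and is exactly what the paper does. The two substantive steps, however, each contain a genuine gap.

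First, the drift bound underlying your proof of \eqref{eq:lem:lowerboundtau1} is false. From \eqref{eq:P_xi lazy}, the conditional drift at a vertex $v$ is $\sum_{w\sim v}\frac{e^{\lambda(\x(w)-\x(v))}}{e^\lambda+1+e^{-\lambda}}(\x(w)-\x(v))\1_{\{\omega(\langle v,w\rangle)=1\}}$; this is $\frac{e^\lambda-e^{-\lambda}}{e^\lambda+1+e^{-\lambda}}=O(\lambda)$ only when \emph{both} horizontal edges at $v$ are open. At a vertex with exactly one open horizontal edge (dead ends of traps, trap entrances, and a positive density of backbone vertices) the drift is $\pm e^{\pm\lambda}/(e^\lambda+1+e^{-\lambda})\to\pm\frac13$, so the compensator is not $O(\lambda n)$ and the optional-stopping inequality $\E_\lambda[\rho_1^\lambda]\le c_0\lambda\,\E_\lambda[\tau_1^\lambda]$ does not follow. (A bound of this type is essentially equivalent to $\limsup_\lambda\vel(\lambda)/\lambda<\infty$, which in the paper is a \emph{consequence} of this lemma.) The paper's route is different and local: each of the $\lfloor 1/\lambda\rfloor$ pre-regeneration points between $\bfnull$ and $R^{\mathrm{pre},\lambda}_1$ is visited a geometric number of times whose mean is at least the effective resistance to $+\infty$, which by Nash--Williams is $\gtrsim 1/\lambda$ uniformly in $\omega$; multiplying the two factors gives $\lambda^{-2}$.

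Second, for \eqref{eq:lem:boundtau1} the step you flag as ``standard'' --- upgrading $\E_\lambda[\Delta_i]=O(\lambda^{-2})$ to $\E_\lambda[\Delta_i^2]=O(\lambda^{-4})$ by an exponential-tail bootstrap --- is precisely where the work lies, and as stated it does not go through. The bootstrap $\Prm(T>2nK)\le 2^{-n}$ needs a bound $K$ on the \emph{quenched} expected crossing time, uniform over starting points and over the environment; here the quenched expectation for a block containing a trap of length $m$ grows like $e^{c\lambda m}$ (the walk is pushed into the trap), and $m$ is unbounded, so no such uniform $K$ exists and an annealed first-moment bound cannot be squared for free. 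One must instead control higher moments of the sojourn time in a trap of length $m$ polynomially in $m$ after rescaling by $\lambda$, and then integrate against the exponential tail $e^{-2\lambdacrit m}$ of the trap-length distribution; this, together with a careful Cauchy--Schwarz decoupling of the (dependent) quantities $\rho_1^\lambda$ and the per-trap/per-vertex occupation times using \eqref{eq:asymptotics after Cauchy-Schwarz trick}, is the content of the paper's proof (the bounds $\limsup_{\lambda\to0}\lambda^4\E_\lambda[T_i^4]<\infty$ and the separate treatment of backbone visits via the escape probability $\pesc\asymp\lambda$). Your commute-time heuristic and the geometric tail of $N$ are reasonable starting points, but without the trap-moment estimates the claimed conclusion is not established.
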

\begin{proof}
The uniform bounds in \eqref{eq:lem:boundtau2} follow from \eqref{eq:lem:boundtau1} and Jensen's inequality.
The bounds \eqref{eq:lem:boundtau3} follow from \eqref{eq:lem:boundtau1}  and \eqref{eq:lem:lowerboundtau1}.
Let us first prove \eqref{eq:lem:lowerboundtau1}.
The time spent until the first $\lambda$-regeneration is bounded below by the number of visits to the pre-regeneration points
$R^{\mathrm{pre}}_k$ with $0 \leq k < \lfloor 1/\lambda \rfloor^{-1}$,
the pre-regeneration points between $\mathbf{0}$ and $R^{\mathrm{pre},\lambda}_{1}$.
Fix such $k \in \{0,\ldots,\lfloor1/\lambda \rfloor^{-1}-1\}$ and write $N_k$ for the number of returns of $(Y_n)_{n \geq 0}$ to $R^{\mathrm{pre}}_k$.
We shall give a lower bound for $E_{\omega,\lambda}[N_k]$.
We may assume without loss of generality that $R^{\mathrm{pre}}_k=\bfnull$.
Under $P_{\omega,\lambda}$, the number of returns of the walk $(Y_n)_{n \geq 0}$ to $\bfnull$ is geometric with success probability being the escape probability
\begin{equation*}
P_{\omega,\lambda}(Y_n \not = \bfnull \text{ for all } n \geq 1) = \frac{\Con(\bfnull,\infty)}{e^\lambda+1+e^{-\lambda}},
\end{equation*}
where the identity is standard in electrical network theory, see for instance \cite[Formula (13)]{Axelson-Fisk+H"aggstr"om:2009b}.
Consequently,
\begin{equation*}
E_{\omega,\lambda}[N_k] = \Big(1-\frac{\Con(\bfnull,\infty)}{e^\lambda+1+e^{-\lambda}}\Big) / \Big(\frac{\Con(\bfnull,\infty)}{e^\lambda+1+e^{-\lambda}}\Big)
\geq \Res(\bfnull,\infty)
\end{equation*}
for all sufficiently small $\lambda > 0$. From the Nash-Williams inequality \cite[Proposition 9.15]{Levin+Peres+Wilmer:2009},
we infer
\begin{equation*}
\Res(\bfnull,\infty) \geq \sum_{k=1}^\infty \big(2 e^{\lambda(2k-1)}\big)^{-1} = \frac{e^{\lambda}}{2} \cdot \frac{1}{1-e^{-2\lambda}},
\end{equation*}
a bound which is independent of $\omega$. Since there are $\lfloor 1/\lambda\rfloor$ such pre-regeneration points to the left of $R^{\mathrm{pre},\lambda}_{1}$,
we conclude that
\begin{equation*}
\liminf_{\lambda\to 0} \lambda^{2} \E_{\lambda}[\tau_{1}^{\lambda}]
\geq \liminf_{\lambda\to 0} \lambda^{2} \lfloor \tfrac{1}{\lambda}\rfloor \frac{e^{\lambda}}{2} \cdot \frac{1}{1-e^{-2\lambda}} > 0. 
\end{equation*}
This proves the first part in \eqref{eq:lem:lowerboundtau1}.
The second part is analogous or follows using Lemma \ref{Lem:lambda-regeneration times and points}.

Let us turn to \eqref{eq:lem:boundtau1}.
We shall prove the unconditioned case for $\tau_{1}^{\lambda}$;
the conditioned case involving $\tau_{2}^{\lambda}-\tau_{1}^{\lambda}$ follows similarly.
We again use the decomposition:
\begin{equation}\label{eq:tau_1 decomposed2}
\tau_1^{2} = (\tau_1^{\lambda,\back} + \tau_1^{\lambda, \mathrm{traps}})^{2}.
\end{equation}
First we treat $\E_{\lambda}[(\tau_1^{\mathrm{traps}})^{2}]$. 

In order to control the time spent in traps we first bound the time spent in a fixed trap of finite length.
Unfortunately the upper bound given in Lemma 6.1(b) in \cite{Gantert+al:2018} is too rough. However, we follow the arguments there but only consider $\kappa=4$.
Let us  consider a discrete line segment $\{0,\ldots,m\}, m\geq 2,$ and a nearest-neighbor random walk $(S_n)_{n \geq 0}$
on this set starting at $i \in \{0,\ldots,m\}$ with transition probabilities
\begin{equation*}
\Probi(S_{k+1}=j+1 \mid S_k=j) =	1-\Probi(S_{k+1}=j-1 \mid S_k=j) = \frac{e^{\lambda}}{e^{-\lambda}+e^{\lambda}}
\end{equation*}
for $j=1,\ldots,m-1$ and
\begin{equation*}
\Probi(S_{k+1}=1 \mid S_k=0) = \Probi(S_{k+1}=m-1 \mid S_k=m) = 1.
\end{equation*}
For $i=0$, we are interested in $\tau_m \defeq \inf\{k \in \N: S_k = 0\}$, the time until the first return of the walk to the origin.
Let $(Z_n)_{n \geq 0}$ be the agile version of $(Y_n)_{n \geq 0}$, i.e., the walk one infers after deleting all entries $Y_n$ for which $Y_n = Y_{n-1}$
from the sequence $(Y_0,Y_1,\ldots)$.
The stopping times $\tau_m$ will be used to estimate the time the agile walk $(Z_n)_{n \geq 0}$ spends
in a trap of length $m$ given that it steps into it.

Let $V_{i} \defeq \sum_{k=1}^{\tau_m-1} \1_{\{S_k=i\}}$ be the number of visits to the point $i$ before the random walk returns to $0$, $i=1,\ldots,m$.
Then $\tau_m = 1+\sum_{i=1}^m V_i$ and, by Jensen's inequality,
\begin{equation}	\label{eq:Etau_m^4 bound}
\Erm_0 [\tau_m^{4}]
~=~	\Erm_0 \bigg[\bigg(1+\sum_{i=1}^m V_i\bigg)^{\!\!4}\bigg]
~\leq~	(m+1)^{3} \bigg(1 + \Erm_0 \bigg[\sum_{i=1}^m V_i^{4}\bigg] \bigg).
\end{equation}
For $i=0,\ldots,m$, let
\begin{equation*}
\sigma_i \defeq \inf\{k \in \N: S_k = i\}
\quad	\text{and}	\quad
r_i \defeq \Probi(\sigma_i < \sigma_0).
\end{equation*}
Given $S_0=i$, when $S_1 = i+1$, then $\sigma_i < \sigma_0$.
When the walk moves to $i-1$ in its first step,
it starts afresh there and hits $i$ before $0$ with probability $\mathrm{P}_{\!\mathit{i}-1}(\sigma_i < \sigma_0)$.
Determining $\mathrm{P}_{\!\mathit{i}-1}(\sigma_i < \sigma_0)$ is the classical ruin problem, hence
\begin{equation}	\label{eq:r_i}
r_i
~=~
\begin{cases}
\frac{e^{\lambda}}{e^{-\lambda}+e^{\lambda}} + \frac{e^{-\lambda}}{e^{-\lambda}+e^{\lambda}} \big(1- \frac{e^{2\lambda}-1}{1-e^{-2\lambda i}} e^{-2\lambda i}\big)
&	\text{for } i=1,\ldots,m-1;	\\
1- \frac{e^{2\lambda}-1}{1-e^{-2\lambda m}} e^{-2\lambda m}
&	\text{for } i=m.
\end{cases}
\end{equation}
In particular, for $i=1,\ldots,m-1$, $r_i$ does not depend on $m$.
Moreover, we have $r_1 \leq r_2 \leq \ldots \leq r_{m-1}$ and $r_1 \leq r_m \leq r_{m-1}$.
By the strong Markov property, for $k \in \N$,
$\Probnull(V_i = k)
= \Probnull(\sigma_i < \sigma_0) \, r_i^{k-1} (1-r_i)$
and hence
\begin{align*}
\Erm_0 [V_i^{4}]
= \sum_{k \geq 1} k^{4} \Probnull(V_i = k)
\leq (1\!-\!r_{m-1}) \sum_{k \geq 1} k^{4} r_{m-1}^{k-1}
\leq p_3(r_{m-1}) \Big( \frac1{1\!-\!r_{m-1}}\Big)^{\!4}\!\!,
\end{align*}
for some polynomial $p_3(x)$ of degree $3$ in $x$ independent of $\lambda$.
Letting $\lambda \to 0$, we find
\begin{equation*}
\lim_{\lambda \to 0} r_{m-1} = \frac{2m-3}{2m-2}
\end{equation*}
and hence
\begin{equation*}
\limsup_{\lambda\to 0} \Erm_0 [V_i^{4}] \leq  p_3\Big(\frac{2m-3}{2m-2}\Big) (2m-2)^{4}.
\end{equation*}
Hence, using equation \eqref{eq:Etau_m^4 bound},
\begin{equation}\label{eq:Etau_m^4 bound2}
\limsup_{\lambda\to 0} \Erm_0 [\tau_m^{4}] \leq  \lim_{\lambda\to 0}\bigg( (m+1)^{3} \bigg(1 + \Erm_0 \bigg[\sum_{i=1}^m V_i^{4}\bigg] \bigg)\bigg) \leq \tilde p(m),
\end{equation}
for some polynomial $\tilde p$.
Let $\ell_1$ denote the length of the trap with the trap entrance having the smallest nonnegative $\x$-coordinate.
Let $\ell_0$ and $\ell_2$ be the lengths of the next trap to the left and right, respectively, etc.
The law of $\ell_0$ differs from the law of the other $\ell_n$ but this difference is not significant for our estimates, see Lemma 5.1 in  \cite{Gantert+al:2018}.
We proceed as in  the proof of Lemma 6.2 in \cite{Gantert+al:2018}.
For any $\omega \in \Omega^*$ and any $v$ on the backbone,
by the same argument that leads to (24) in \cite{Axelson-Fisk+H"aggstr"om:2009b},
\begin{equation}	\label{eq:lower bound for escape probability}	\textstyle
P_{\omega,\lambda}^v(Y_n \not = v \text{ for all } n \in \N)
\geq \frac{(\sum_{k=0}^{\infty} e^{-\lambda k})^{-1}}{e^{\lambda}+1+e^{-\lambda}}
= \frac{1-e^{-\lambda}}{e^{\lambda}+1+e^{-\lambda}}
\eqdef \pesc.
\end{equation}
This bound is uniform in the environment $\omega \in \Omega^*$ but depends on $\lambda$. Denote by $v_{i}$ the entrance of the $i$th trap.  
By the strong Markov property,
$T_i$, the time spent in the $i$th trap, can be decomposed into $M$ i.i.d.~excursions into the trap:
$T_i = T_{i,1}+ \ldots + T_{i,M}$. Since $v_i$ is forwards-communicating, \eqref{eq:lower bound for escape probability} implies
that  $P_{\omega,\lambda}(M \geq n) \leq (1-\pesc)^{n}$, $n \in \N$.
In particular, $M$ is stochastically bounded by a geometric random variable $\tilde M$ with success parameter $\pesc$.
Moreover, $T_{i,1}, \ldots, T_{i,j}$ are i.i.d.~conditional on $\{M \geq j\}$.
We now derive an upper bound for $E_{\omega,\lambda}[T_{i,j}^{4} | M \geq j]$. To this end, we have to take into account the times the walk stays put.
Each time, the agile walk $(Z_n)_{n \geq 0}$ makes a step in the trap,
this step is preceded by an independent geometric number of times the lazy walk stays put.
The success parameter of this geometric random variable depends on the position inside the trap.
However, it is stochastically dominated by a geometric random variable $G$ with
$\mathrm{P}_{\!0}(G \geq k) = \gamma_\lambda^k$ for $\gamma_\lambda = (1+e^{\lambda})/(e^{\lambda}+1+e^{-\lambda})$.
Plainly, $\gamma_\lambda \to \frac23$ as $\lambda \to 0$. Consequently, estimate \eqref{eq:Etau_m^4 bound2} and Jensen's inequality give
\begin{equation*}
\limsup_{\lambda\to 0}E_{\omega,\lambda}[T_{i,j}^{4} | M \geq j] \leq \limsup_{\lambda\to 0} \Erm_0[\tau_{\ell_i}^4] \Erm_0[G^4] \leq \hat p(\ell_{i}),
\end{equation*}
where $\ell_{i}$ is the length of the $i$th trap (which is treated as a constant under the expectation $\Erm_0$) and $\hat p = \Erm_0[G] \cdot \tilde p$ is again a polynomial.
Moreover, by Jensen's inequality and the strong Markov property,
\begin{align*}
E_{\omega,\lambda}[T_i^{4}]
&=  \sum_{m=1}^{\infty}  
E_{\omega,\lambda}\bigg[\bigg(\sum_{j=1}^m T_{i,j}\bigg)^{\!\!4} \bigg| M = m \bigg] P_{\omega,\lambda}(M=m) \\
& \leq \sum_{m=1}^\infty m^{4}  E_{\omega,\lambda}[T_{i,1}^{4} | M\geq 1] P_{\omega,\lambda}(M = m) \\
& \leq \Erm [\tilde M^4]  E_{\omega,\lambda}[T_{i,1}^{4} | M\geq 1]	\\
& \leq \tilde c \Big( \frac1{\pesc}\Big)^{4} E_{\omega,\lambda}[T_{i,1}^{4} | M\geq 1],
\end{align*}
for some constant $\tilde c$ independent of $\omega$ and $\lambda$.
We have
\begin{equation}	\label{eq:order of 1/pesc}
\limsup_{\lambda\to 0} \lambda  \Big( \frac1{\pesc}\Big)<\infty.
\end{equation}
Hence
\begin{equation}\label{eq:unifbournTi}
\limsup_{\lambda\to 0} \lambda^{4}   \E_{\lambda}[T_{i}^{4} | \ell_{i}=m] \leq p^*(m),
\end{equation}
where $p^*$ is a polynomial with coefficients independent of $\lambda$. Using Lemma 3.5 in \cite{Gantert+al:2018}, we find
\begin{align*}
\E_{\lambda}[T_{i}^{4}]
&=	\sum_{m\geq 1} \Prob_{\lambda}(\ell_{i}=m) \E_{\lambda}[T_{i}^{4} | \ell_{i}=m] \cr
& \leq	c(p) \sum_{m\geq 1} m e^{-2\lambdacrit m}  \E_{\lambda}[T_{i}^{4} | \ell_{i}=m],
\end{align*}
where $c(p)$ is a constant only depending on $p$.
Due to \eqref{eq:unifbournTi}, the dominated convergence theorem applies and gives the following bound:
\begin{equation}\label{eq:lambdaboudT}
\limsup_{\lambda\to 0} \lambda^{4} \E_{\lambda}[T_{i}^{4}] < \infty.
\end{equation}
Now let $L\defeq -\min\{X_{k}, k\in \N\}$ be the absolute value of the leftmost visited $\x$-coordinate of the walk.
Since
\begin{equation}\label{eq:decomposition tau traps}
\E_{\lambda}\big[ (\tau_{1}^{\lambda, \mathrm{traps}})^{2}\big]
\leq \E_{\lambda}\bigg[ \bigg(\sum_{i=-L}^{-1} T_{i} + T_{0} + \sum_{i=1}^{\rho_{1}^{\lambda}} T_{i} \bigg)^{\!\! 2}\bigg]
\end{equation}
we first consider
\begin{align}\label{eq:decouplingTandrho1}
\E_{\lambda}\bigg[ \bigg(\sum_{i=1}^{\rho_{1}^{\lambda}} T_{i}\bigg)^{\!\! 2}\bigg]
&=	\E_{\lambda}\bigg[\sum_{i,j=1}^\infty  T_{i} T_j \1_{\{\rho_{1}^\lambda \geq i \vee j\}}   \bigg]	\notag	\\
&\leq	 \sum_{j=1}^\infty \E_{\lambda}\big[T_{j}^2 \1_{\{\rho_{1}^\lambda \geq j\}}\big]
+ 2 \sum_{j=1}^\infty \sum_{i=1}^{j-1} \E_{\lambda}\big[T_{i} T_{j} \1_{\{\rho_{1}^\lambda \geq j\}}\big].
\end{align}
One application of the Cauchy-Schwarz inequality for the first sum and two applications for the second give
\begin{align*}
\E_{\lambda}\bigg[ \bigg(\sum_{i=1}^{\rho_{1}^{\lambda}} T_{i}\bigg)^{\!\! 2}\bigg]
&\leq \sum_{j=1}^\infty (\E_{\lambda}[T_{j}^4])^{1/2} \Prob_\lambda(\rho_{1}^\lambda \geq j)^{1/2}	\\
&\hphantom{\leq} + 2 \sum_{j=1}^\infty \sum_{i=1}^{j-1} (\E_{\lambda}[T_{i}^4] \E_{\lambda}[T_{j}^4])^{1/4} \Prob_\lambda(\rho_{1}^\lambda \geq j)^{1/2}	\\
&\leq (\E_{\lambda}[T_{1}^4])^{1/2} \bigg( \sum_{j=1}^\infty  C^{1/2} e^{- \lambda \varepsilon j/2} + 2 \sum_{j=1}^\infty j \Prob_\lambda(\rho_{1}^\lambda \geq j)^{1/2} \bigg).
\end{align*}
With the estimates \eqref{eq:lambdaboudT} and \eqref{eq:asymptotics after Cauchy-Schwarz trick} we obtain
\begin{equation*}
\limsup_{\lambda\to 0} \lambda^{4} \E_{\lambda}\bigg[ \bigg(\sum_{i=1}^{\rho_{1}^{\lambda}} T_{i}\bigg)^{\!\!2}\bigg]<\infty.
\end{equation*}
The first term in the upper bound in \eqref{eq:decomposition tau traps} is treated in the same way.
Next, we show that $\Prob_\lambda(L \geq m)$ decays exponentially fast in $m$.
Indeed, $L \geq 2m$ implies that there is an excursion on the backbone to the left of length at least $m$
or the origin is in a trap that covers the piece $[-m,0)$ and thus has length at least $m$.
The probability that there is an excursion on the backbone of length at least $m$ is bounded by a constant (independent of $\lambda$)
times $e^{-2\lambda m}$ by Lemma 6.3 in \cite{Gantert+al:2018}.
The probability that a trap that covers the piece $[-m,0)$ is bounded by a constant (again independent of $\lambda$) times $e^{-2\lambdacrit m}$
by \cite[pp.\;3403--3404]{Axelson-Fisk+H"aggstr"om:2009} or \cite[Lemma 3.2]{Gantert+al:2018}.
We may thus argue as above to conclude that
\begin{equation*}
\limsup_{\lambda\to 0} \lambda^{4} \E_{\lambda}\bigg[ \bigg(\sum_{i=-L}^{-1} T_{i}\bigg)^{\!\!2}\bigg]<\infty.
\end{equation*}
Regarding the term $\E_{\lambda}[T_{0}^{2}]$, we can apply \eqref{eq:lambdaboudT}.
Controlling the mixed terms in 
\eqref{eq:decomposition tau traps} using the Cauchy-Schwarz inequality we obtain
\begin{equation}\label{eq:tau traps}
\limsup_{\lambda\to 0}\lambda^{4} \E_{\lambda}\big[(\tau_{1}^{\lambda, \mathrm{traps}})^{2}\big] <\infty.
\end{equation}
Next we treat the time on the backbone. Since the strategy of proof is the same as for the traps we try to be as brief as possible.
Write $N(v) \defeq \sum_{n \geq 0} \1_{\{Y_n=v\}}$ for the number of visits of the walk $(Y_n)_{n \geq 0}$ to $v \in V$.
We have
\begin{align}
\E_{\lambda}\big[ \big(\tau_1^{\lambda, \back}\big)^{2}\big]
&\leq \E_{\lambda}\bigg[  \bigg(\sum_{-L \leq \x(v) \leq \rho_{1}^{\lambda}} N(v) \1_{\{v \in \back\}}\bigg)^{\!\!2}\bigg]	\notag	\\
&= \E_{\lambda}\bigg[  \bigg(\sum_{-L \leq \x(v) < 0} N(v) \1_{\{v \in \back\}} + \sum_{0 \leq \x(v) \leq \rho_{1}^{\lambda}} N(v) \1_{\{v \in \back\}}\bigg)^{\!\!2}\bigg]	\label{eq:backbone 2nd moment}
\end{align}
We treat the second moment of the second sum first. Using the Cauchy-Schwarz inequality twice, we infer
\begin{align*}
&\E_{\lambda}\bigg[\bigg(\sum_{0 \leq \x(v) \leq \rho_{1}^{\lambda}} N(v) \1_{\{v \in \back\}}\bigg)^{\!\!2}\bigg]	\\
&=	\E_{\lambda}\bigg[\sum_{\x(v),\x(w) \geq 0} N(v) N(w) \1_{\{v,w \in \back\}} \1_{\{\rho_{1}^{\lambda} \geq \x(v) \vee \x(w)\}}\bigg]	\\
&\leq 2 \E_{\lambda}\bigg[\sum_{j=0}^\infty \sum_{i=0}^j \sum_{\substack{\x(v)=i, \\\x(w)=j}} N(v) N(w) \1_{\{v,w \in \back\}} \1_{\{\rho_{1}^{\lambda} \geq j\}}\bigg]\\
&\leq 2 \sum_{j=0}^\infty \sum_{i=0}^j \sum_{\substack{\x(v)=i, \\\x(w)=j}} \big(\E_{\lambda} [N(v)^4 \1_{\{v \in \back\}}]\big)^{1/4} \big(\E_\lambda[N(w)^4 \1_{\{w \in \back\}}]\big)^{1/4}
\Prob_\lambda(\rho_{1}^{\lambda} \geq j)^{1/2} \!.
\end{align*}
The number of visits to $v \in \back$ is stochastically dominated by a geometric random variable with success probability $\pesc$, see \eqref{eq:lower bound for escape probability}.
Hence
\begin{equation*}
\E[N(v)^4 \1_{\{v \in \back\}}] \leq \tilde c \Big( \frac1{\pesc}\Big)^{4}.
\end{equation*}
Using \eqref{eq:asymptotics after Cauchy-Schwarz trick} and \eqref{eq:order of 1/pesc}, we infer
\begin{align*}
\limsup_{\lambda \to 0} & \lambda^4 \E_{\lambda}\bigg[\bigg(\sum_{0 \leq \x(v) \leq \rho_{1}^{\lambda}} N(v) \1_{\{v \in \back\}}\bigg)^{\!\!2}\bigg]	\\
&\leq 8 \tilde c^{1/2} \limsup_{\lambda \to 0} \lambda^4 \Big( \frac1{\pesc}\Big)^{2} \sum_{j=0}^\infty (j+1) \Prob_\lambda(\rho_{1}^{\lambda} \geq j)^{1/2} < \infty.
\end{align*}
We may argue similarly to infer the analogous statement for the first sum in \eqref{eq:backbone 2nd moment}.
Hence, using again the Cauchy-Schwarz inequality for the mixed terms in \eqref{eq:backbone 2nd moment}, we conclude that
\begin{equation}\label{eq:tau backbone}
\limsup_{\lambda\to 0}\lambda^{4} \E_{\lambda}\left[ (\tau_{1}^{\lambda, \back})^{2}   \right] <\infty.
\end{equation}
Using the Cauchy-Schwarz inequality for the mixed terms in decomposition \eqref{eq:tau_1 decomposed2}
together with \eqref{eq:tau traps} and \eqref{eq:tau backbone}, we finally obtain the first statement in \eqref{eq:lem:boundtau1}.
The second statement in  \eqref{eq:lem:boundtau1} then follows from Lemma \ref{Lem:lambda-regeneration times and points}.
\end{proof}

The existence of a regeneration structure allows to express the linear speed in terms of regeneration points and times. 
\begin{lemma}	\label{Lem:SLLN}
Let $\lambda>0$. Then
\begin{equation*}
\vel(\lambda) =  \frac{  \E_{\lambda}[\rho_{2}^{\lambda} - \rho_{1}^{\lambda}]}{ \E_{\lambda}[\tau_{2}^{\lambda}-\tau_{1}^{\lambda} ]}.
\end{equation*}
\end{lemma}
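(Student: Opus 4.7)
The plan is to read off the linear speed from the asymptotics of the regeneration structure. By Lemma \ref{Lem:lambda-regeneration times and points} the sequence of pairs $((\tau_{n+1}^{\lambda}-\tau_n^{\lambda},\rho_{n+1}^{\lambda}-\rho_n^{\lambda}))_{n \in \N}$ is stationary and $1$-dependent under $\Prob_\lambda$, and by \eqref{eq:uniform exponential regeneration point estimates} together with Lemma \ref{lem:boundtau} all four of the first moments $\E_\lambda[\rho_1^\lambda]$, $\E_\lambda[\rho_2^\lambda-\rho_1^\lambda]$, $\E_\lambda[\tau_1^\lambda]$, $\E_\lambda[\tau_2^\lambda-\tau_1^\lambda]$ are finite; moreover \eqref{eq:lem:lowerboundtau1} guarantees $\E_\lambda[\tau_2^\lambda-\tau_1^\lambda]>0$, so the ratio on the right-hand side is well-defined.

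The first step is to invoke a strong law of large numbers for stationary $1$-dependent sequences with finite first moment (which follows, e.g., from a standard splitting of the sum into two i.i.d.\ subsequences indexed by even and odd $n$, to each of which Kolmogorov's SLLN applies). Applied separately to $(\rho_{n+1}^\lambda-\rho_n^\lambda)$ and $(\tau_{n+1}^\lambda-\tau_n^\lambda)$, this yields, $\Prob_\lambda$-a.s.,
\begin{equation*}
\frac{\rho_N^\lambda}{N} \;\to\; \E_\lambda[\rho_2^\lambda-\rho_1^\lambda]
\qquad\text{and}\qquad
\frac{\tau_N^\lambda}{N} \;\to\; \E_\lambda[\tau_2^\lambda-\tau_1^\lambda],
\end{equation*}
after absorbing the initial (differently distributed) term $(\tau_1^\lambda,\rho_1^\lambda)$ into the negligible $1/N$ error.

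The second step is to combine this with Proposition \ref{Prop:SLLN}. Since $\tau_N^\lambda \to \infty$ a.s.\ (the increments are positive integers and have positive mean), the a.s.\ convergence $X_n/n \to \vel(\lambda)$ evaluated along the subsequence $n=\tau_N^\lambda$ gives
\begin{equation*}
\vel(\lambda) \;=\; \lim_{N\to\infty} \frac{X_{\tau_N^\lambda}}{\tau_N^\lambda} \;=\; \lim_{N\to\infty} \frac{\rho_N^\lambda}{\tau_N^\lambda} \;=\; \lim_{N\to\infty} \frac{\rho_N^\lambda/N}{\tau_N^\lambda/N} \;=\; \frac{\E_\lambda[\rho_2^\lambda-\rho_1^\lambda]}{\E_\lambda[\tau_2^\lambda-\tau_1^\lambda]},
\end{equation*}
where we used the defining identity $X_{\tau_N^\lambda}=\rho_N^\lambda$ and the positivity of $\E_\lambda[\tau_2^\lambda-\tau_1^\lambda]$ to pass to the ratio. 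This is the claimed formula.

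There is no serious obstacle here; the only mild point to check is the SLLN for the $1$-dependent stationary sequence, but this is classical. Should one prefer to avoid invoking a $1$-dependent SLLN directly, one can instead pair consecutive increments to produce an i.i.d.\ sequence (using that $((\tau_{2k}^\lambda-\tau_{2k-1}^\lambda)+(\tau_{2k+1}^\lambda-\tau_{2k}^\lambda))_{k \in \N}$ inherits independence from Lemma \ref{Lem:lambda-regeneration times and points} after sufficient spacing) and apply the ordinary Kolmogorov SLLN.
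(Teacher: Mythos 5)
Your argument is correct and is essentially the proof the paper has in mind (the paper omits it, referring to the standard regeneration/renewal argument for $1$-dependent sequences as in Janson's work): apply the SLLN to the stationary $1$-dependent increment sequence via the even/odd splitting, then evaluate $X_n/n \to \vel(\lambda)$ along $n = \tau_N^\lambda$ using $X_{\tau_N^\lambda} = \rho_N^\lambda$. Only your parenthetical alternative is slightly off --- pairing consecutive increments of a $1$-dependent sequence does not yield an i.i.d.\ sequence, since adjacent pairs still share a dependent boundary term --- but your primary route does not rely on it.
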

We omit the proof as it is standard and can be derived as \cite[Proposition 4.3]{Gantert+al:2018},
with references to classical renewal theory replaced by references to renewal theory for $1$-dependent variables
as presented in \cite{Janson:1983}.
As a consequence of Lemmas \ref{Lem:uniform exponential regeneration point estimates}, \ref{lem:boundtau} and \ref{Lem:SLLN}, we obtain
\begin{equation}	\label{eq:limsup v(lambda)/lambda}
\limsup_{\lambda \to 0} \frac{\vel(\lambda)}{\lambda} < \infty.
\end{equation}

\subsection{Proof of Equation \eqref{eq:2nd step}.}	\label{subsec:final step}

It remains to prove \eqref{eq:2nd step}, i.e.,
\begin{equation*}	\tag{\ref{eq:2nd step}}
\lim_{\alpha\to\infty} \lim_{\substack{\lambda \to 0,\\ \lambda^{2} n \to \alpha}}
\bigg[\frac{\vel(\lambda)}{\lambda} - \frac{\E_{\lambda}[X_n]}{\lambda n}\bigg]
~=~ 0.
\end{equation*}
The proof follows along the lines of Section 5.3 in \cite{Gantert+al:2012}.
In order to keep this paper self-contained, we repeat the corresponding arguments from \cite{Gantert+al:2012} in the present context.

For $\lambda>0$, we set
\begin{equation*}
k(n)	\defeq	 \left\lfloor \frac{n}{\E_{\lambda}[\tau^\lambda_2-\tau^\lambda_1]} \right\rfloor,	\quad	n \in \N.
\end{equation*}
Notice that $k(n)$ is deterministic but depends on $\lambda$ even though this dependence does not figure in the notation.
Analogously, we shall sometimes write $\tau_n$ for $\tau_n^\lambda$ and, thereby, suppress the dependence on $\lambda$.
For the proof of \eqref{eq:2nd step}, it is sufficient to show that
\begin{align}
\lim_{\alpha \to \infty} \limsup_{\substack{\lambda \to 0,\\ \lambda^2 n \sim \alpha}}
\bigg| \frac{\E_{\lambda}[X_{\tau_{k(n)}}]}{\lambda n} - \frac{\vel(\lambda)}{\lambda}\bigg|
&~=~	0	\label{eq:vel<->reg pts}\\
\text{and}	\qquad
\lim_{\alpha \to \infty} \limsup_{\substack{\lambda \to 0,\\ \lambda^2 n \sim \alpha}}
\bigg| \frac{1}{\lambda n} \E_{\lambda}[X_n-X_{\tau_{k(n)}}]\bigg|	\label{eq:X_n<->X_tau_k(n)}
&~=~	0.
\end{align}
For the proof of \eqref{eq:vel<->reg pts}, notice that
\begin{align}
\bigg| \frac{\E_{\lambda}[X_{\tau_{k(n)}}]}{\lambda n} - \frac{\vel(\lambda)}{\lambda}\bigg|
&=
\bigg| \frac1{\lambda n} \E_{\lambda}\bigg[\sum_{j=1}^{k(n)} (X_{\tau_j}-X_{\tau_{j-1}}) \bigg] - \frac{\vel(\lambda)}{\lambda}\bigg|	\notag	\\
&\leq \frac{1}{\lambda n} \E_{\lambda}[X_{\tau_1}]
+ \bigg| \frac1{\lambda n} (k(n)-1) \E_{\lambda}[X_{\tau_2}-X_{\tau_{1}}] - \frac{\vel(\lambda)}{\lambda}\bigg|.	\label{eq:vel<->reg pts2}
\end{align}
Here, using that $\lambda^2 n \sim \alpha$,
we have that $\frac{1}{\lambda n} \E_{\lambda}[X_{\tau_1}] \sim \alpha^{-1} \lambda \E_{\lambda}[X_{\tau_1}]$.
Thus, the first term in \eqref{eq:vel<->reg pts2} vanishes as first $\lambda \to 0$ and $n \to \infty$ simultaneously
and then $\alpha \to \infty$ by \eqref{eq:lambda^2 E_lambda X_tau_1^2<infty}.
Turning to the second summand in \eqref{eq:vel<->reg pts2}, we first notice that by Lemma \ref{Lem:SLLN},
we have $\vel(\lambda) = \E_{\lambda}[X_{\tau_2}-X_{\tau_1}] / \E_{\lambda}[\tau_2-\tau_1]$ and hence
\begin{align*}
\bigg| & \frac1{\lambda n} (k(n)-1) \E_{\lambda}[X_{\tau_2}-X_{\tau_{1}}] - \frac{\vel(\lambda)}{\lambda}\bigg|	\\
&= \frac{\vel(\lambda)}{\lambda} \bigg| \frac{\E_{\lambda}[\tau_2-\tau_{1}]}{n} \Big(\left\lfloor \frac{n}{\E_{\lambda}[\tau_2-\tau_1]} \right\rfloor-1\Big) - 1\bigg|.
\end{align*}
This expression vanishes as $\lambda \to 0$ since $\limsup_{\lambda \to 0} \frac{\vel(\lambda)}{\lambda} < \infty$ by \eqref{eq:limsup v(lambda)/lambda}.

It finally remains to prove \eqref{eq:X_n<->X_tau_k(n)}.
We begin by proving the analogue of Lemma 5.13 in \cite{Gantert+al:2012}.
While the proof is essentially the same, we have to replace the independence property of the times between two regenerations by the $1$-dependence property.
\begin{lemma}	\label{eq:tau_k-kE_[tau_2-tau_1]}
For all $\varepsilon >0$,
\begin{equation*}
\Prob_{\lambda}\big(| \tau_{k} \!-\! k \E_{\lambda}[\tau_{2} \!-\!\tau_{1}]| \geq \varepsilon  k \E_{\lambda}[\tau_{2}\!-\!\tau_{1}]\big) \to 0 \text{ for } k\to \infty
\end{equation*}
uniformly in $\lambda \in (0,\lambda^*]$ for some sufficiently small $\lambda^* > 0$.
\end{lemma}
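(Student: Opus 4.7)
The plan is to reduce this to a uniform Chebyshev estimate on the centered sum of regeneration-time increments. Write
\begin{equation*}
\tau_k^\lambda = \tau_1^\lambda + \sum_{j=2}^{k} (\tau_j^\lambda - \tau_{j-1}^\lambda),
\end{equation*}
and recall that by Lemma \ref{Lem:lambda-regeneration times and points} the sequence $(\tau_{j}^\lambda - \tau_{j-1}^\lambda)_{j \geq 2}$ is $1$-dependent and identically distributed, with each increment distributed as $\tau_2^\lambda - \tau_1^\lambda$ under $\Prob_\lambda$. In particular,
\begin{equation*}
\E_\lambda[\tau_k^\lambda] = \E_\lambda[\tau_1^\lambda] + (k-1)\,\E_\lambda[\tau_2^\lambda - \tau_1^\lambda].
\end{equation*}

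Next I would split the probability in the statement by the triangle inequality into a centering error and a concentration term: for $k$ large enough,
\begin{equation*}
|\tau_k^\lambda - k\E_\lambda[\tau_2^\lambda - \tau_1^\lambda]|
\leq |\tau_k^\lambda - \E_\lambda[\tau_k^\lambda]| + |\E_\lambda[\tau_1^\lambda]| + |\E_\lambda[\tau_2^\lambda - \tau_1^\lambda]|.
\end{equation*}
By Lemma \ref{lem:boundtau}, $\lambda^{2} \E_\lambda[\tau_1^\lambda]$ and $\lambda^{2} \E_\lambda[\tau_2^\lambda - \tau_1^\lambda]$ are both bounded above, while $\lambda^{2} \E_\lambda[\tau_2^\lambda - \tau_1^\lambda]$ is also bounded below by some $c>0$ for small $\lambda$; hence the ratio
\begin{equation*}
\frac{\E_\lambda[\tau_1^\lambda] + \E_\lambda[\tau_2^\lambda - \tau_1^\lambda]}{k\,\E_\lambda[\tau_2^\lambda - \tau_1^\lambda]}
\leq \frac{\mathrm{const}}{k}
\end{equation*}
uniformly in $\lambda \in (0,\lambda^*]$, so the centering error is absorbed into $\tfrac{\varepsilon}{2}\,k\E_\lambda[\tau_2^\lambda - \tau_1^\lambda]$ for $k$ large.

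For the concentration term I would apply Chebyshev's inequality and use that for a $1$-dependent, stationary (from index $2$ on) sequence the variance of the partial sum satisfies
\begin{equation*}
\Var_\lambda\Big(\sum_{j=2}^{k}(\tau_j^\lambda - \tau_{j-1}^\lambda)\Big) \leq 3(k-1)\,\E_\lambda[(\tau_2^\lambda-\tau_1^\lambda)^2],
\end{equation*}
by expanding the square and bounding each nonzero covariance by Cauchy--Schwarz. Combining this with $\Var_\lambda(\tau_1^\lambda) \leq \E_\lambda[(\tau_1^\lambda)^{2}]$ and the Cauchy--Schwarz inequality to cross-terms yields $\Var_\lambda(\tau_k^\lambda) \leq C \cdot k \cdot \max\{\E_\lambda[(\tau_1^\lambda)^2], \E_\lambda[(\tau_2^\lambda - \tau_1^\lambda)^2]\}$. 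Then Chebyshev gives
\begin{equation*}
\Prob_\lambda\big(|\tau_k^\lambda - \E_\lambda[\tau_k^\lambda]| \geq \tfrac{\varepsilon}{2}\,k\E_\lambda[\tau_2^\lambda - \tau_1^\lambda]\big)
\leq \frac{4 C\,\max\{\E_\lambda[(\tau_1^\lambda)^2], \E_\lambda[(\tau_2^\lambda-\tau_1^\lambda)^2]\}}{\varepsilon^2 \, k \, \E_\lambda[\tau_2^\lambda-\tau_1^\lambda]^{2}}.
\end{equation*}
The uniformity then drops out of Lemma \ref{lem:boundtau}: the ratio $\E_\lambda[(\tau_1^\lambda)^2]/\E_\lambda[\tau_1^\lambda]^2$ and $\E_\lambda[(\tau_2^\lambda-\tau_1^\lambda)^2]/\E_\lambda[\tau_2^\lambda-\tau_1^\lambda]^2$ are bounded uniformly for $\lambda \in (0,\lambda^*]$ by \eqref{eq:lem:boundtau3}, and $\E_\lambda[\tau_1^\lambda]^2 / \E_\lambda[\tau_2^\lambda-\tau_1^\lambda]^2$ is bounded by the combination of \eqref{eq:lem:lowerboundtau1} and \eqref{eq:lem:boundtau2}, so the right-hand side decays like $O(1/k)$ uniformly in $\lambda$. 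The main (mild) obstacle is the uniformity in $\lambda$: it is not enough to know that each $\tau_j^\lambda-\tau_{j-1}^\lambda$ has finite variance; one needs the scale-matching between second and squared first moments recorded in \eqref{eq:lem:boundtau3}, which is precisely what Lemma \ref{lem:boundtau} provides.
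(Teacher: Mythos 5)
Your proposal is correct and follows essentially the same route as the paper: Chebyshev's inequality applied to the centered sum of regeneration-time increments, the $1$-dependence from Lemma \ref{Lem:lambda-regeneration times and points} combined with Cauchy--Schwarz to bound the covariance terms by $O(k)$ times the second moments, and the uniformity in $\lambda$ extracted from Lemma \ref{lem:boundtau}. The only cosmetic difference is that you separate the deterministic centering error $\E_\lambda[\tau_1^\lambda]-\E_\lambda[\tau_2^\lambda-\tau_1^\lambda]$ by the triangle inequality before applying Chebyshev, whereas the paper absorbs the non-centered first summand $A_1=\tau_1^\lambda-\E_\lambda[\tau_2^\lambda-\tau_1^\lambda]$ directly into the variance expansion; both handle it correctly.
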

\begin{proof}
An application of Markov's inequality yields
\begin{align*}
\Prob_{\lambda} & \big(| \tau_{k} \!-\! k \E_{\lambda}[\tau_{2} \!-\!\tau_{1}]| \geq \varepsilon  k \E_{\lambda}[\tau_{2}\!-\!\tau_{1}]\big)	\\
&\leq \frac1{\varepsilon^2 k^2 (\E_{\lambda}[\tau_{2}\!-\!\tau_{1}])^2} \E_\lambda \big[\big| \tau_{k}\!-\!k\E_{\lambda}[\tau_{2}\!-\!\tau_{1}]\big|^2\big]	\\
&= \frac1{\varepsilon^2 k^2\E_{\lambda}[\tau_{2}\!-\!\tau_{1}]^2}
\E_\lambda \bigg[\!\bigg( \! \tau_1 \!-\! \E_\lambda[\tau_2\!-\!\tau_1]
+\sum_{j=2}^k (\tau_{j}\!-\!\tau_{j-1} \!-\! \E_{\lambda}[\tau_{2} \!-\!\tau_{1}]) \! \bigg)^{\!\!2}\bigg].
\end{align*}
Denote the summands under the square by $A_1,\ldots,A_k$.
Expanding the square and using the $1$-dependence of the $A_j$ and the fact that all $A_j$ but $A_1$ are centered, we infer
\begin{align*}
\Prob_{\lambda} & \big(| \tau_{k} \!-\! k \E_{\lambda}[\tau_{2} \!-\!\tau_{1}]| \geq \varepsilon  k \E_{\lambda}[\tau_{2}\!-\!\tau_{1}]\big)	\\
&\leq \frac1{\varepsilon^2 k^2\E_{\lambda}[\tau_{2}\!-\!\tau_{1}]^2}
\bigg(\sum_{j=1}^k \E_\lambda[A_j^2] + \sum_{|i-j|=1} \E_\lambda[A_i A_j] \bigg)	\\
&\leq \frac1{\varepsilon^2 k^2\E_{\lambda}[\tau_{2}\!-\!\tau_{1}]^2}
\bigg(\sum_{j=1}^k \E_\lambda[A_j^2] + 2\big(\E_\lambda[A_1^2] \E_\lambda[A_2^2]\big)^{1/2} \\
&\hphantom{\leq \frac1{\varepsilon^2 k^2\E_{\lambda}[\tau_{2}\!-\!\tau_{1}]^2}
\bigg(}~+ 2(k-1) \big(\E_\lambda[A_2^2] \E_\lambda[A_3^2 ]\big)^{1/2} \bigg).
\end{align*}
The assertion now follows from Lemma \ref{lem:boundtau}.
\end{proof}
Now fix $\varepsilon > 0$ and write
\begin{align}
\frac{1}{\lambda n} \big|\E_{\lambda}[X_n-X_{\tau_{k(n)}}]\big|
&\leq \frac{1}{\lambda n} \big|\E_{\lambda}[(X_n-X_{\tau_{k(n)}}) \1_{\{|\tau_{k(n)}-n| < \varepsilon n\}}]\big|	\notag	\\
&\hphantom{\leq}~+\frac{1}{\lambda n}\big|\E_{\lambda}[(X_n-X_{\tau_{k(n)}})\1_{\{|\tau_{k(n)}-n| \geq \varepsilon n\}}]\big|.	\label{eq:decomposition wrt tau_k(n)}
\end{align}
Using the Cauchy-Schwarz inequality, the first term on the right-hand side of \eqref{eq:decomposition wrt tau_k(n)} can be bounded as follows.
\begin{align*}
\frac{1}{\lambda n}\big|\E_{\lambda}&[(X_n\!-\!X_{\tau_{k(n)}}) \1_{\{|\tau_{k(n)}-n| < \varepsilon n\}}]\big|
\leq \frac{2}{\lambda n} \E_{\lambda}\Big[\max_{|j-n| < \varepsilon n} |X_{j}\!-\!X_{\lfloor(1-\varepsilon)n\rfloor}|\Big]	\\
&= \frac{2}{\lambda n} \E_0\Big[\max_{|j-n| < \varepsilon n} |X_{j}\!-\!X_{\lfloor(1-\varepsilon)n\rfloor}| e^{\lambda M_n - \lambda^2 A_n + R_{\lambda,n}} \Big]	\\
&\leq \frac{2}{\lambda n} \E_0\Big[\max_{|j-n| < \varepsilon n} |X_{j}\!-\!X_{\lfloor(1-\varepsilon)n\rfloor}|^2 \Big]^{1/2}
\E_0\big[e^{2 \lambda M_n - 2 \lambda^2 A_n + 2 R_{\lambda,n}} \big]^{1/2}.
\end{align*}
We infer $\limsup_{\lambda^2n \to \alpha} \E_0[e^{2 \lambda M_n - 2 \lambda^2 A_n + 2 R_{\lambda,n}}] < \infty$ as in the proof of Proposition \ref{Prop:3rd step}.
Regarding the first factor, we find
\begin{align*}
\frac{1}{\lambda^2 n^2} \E_0\Big[\max_{|j-n| < \varepsilon n} |X_{j}\!-\!X_{\lfloor(1-\varepsilon)n\rfloor}|^2 \Big]
&= \frac{1}{\lambda^2n} \frac1n\Ermp \Big[ E_{\overline{\omega}(\lfloor(1-\varepsilon)n\rfloor),0} \Big[\max_{0  \leq j \leq 2\varepsilon n} X_{j}^2 \Big]\Big]	\\
&= \frac{2\varepsilon}{\lambda^2n} \frac1{2\varepsilon n} \E_0\Big[\max_{0  \leq j \leq 2\varepsilon n} X_{j}^2 \Big].
\end{align*}
This term vanishes as first $\lambda^2 n \to \alpha$ (by Lemma \ref{Lem:max X_k^2 bound}) and then $\alpha \to \infty$.
The second term on the right-hand side of \eqref{eq:decomposition wrt tau_k(n)} can be bounded using the Cauchy-Schwarz inequality, namely,
\begin{align*}
\frac{1}{\lambda n}&\big|\E_{\lambda}[(X_n-X_{\tau_{k(n)}})\1_{\{|\tau_{k(n)}-n| \geq \varepsilon n\}}]\big|	\\
&\leq \frac{1}{\lambda n} \E_{\lambda}[(X_n-X_{\tau_{k(n)}})^2]^{1/2} \cdot \Prob_\lambda(|\tau_{k(n)}-n| \geq \varepsilon n)^{1/2}	\\
&\leq \frac{\sqrt{2}}{\lambda^2 n} \big( \lambda^2 \E_{\lambda}[X_n^2]
+ \lambda^2 \E_\lambda[X_{\tau_{k(n)}}^2]\big)^{1/2} \cdot \Prob_\lambda(|\tau_{k(n)}-n| \geq \varepsilon n)^{1/2}.
\end{align*}
The first factor stays bounded as $\lambda^2n \to \alpha$
whereas the second factor tends to $0$ as $n \to \infty$ and $\lambda^2n \to \alpha$ by Lemma \ref{eq:tau_k-kE_[tau_2-tau_1]}.
Altogether, this finishes the proof of \eqref{eq:X_n<->X_tau_k(n)}.

\begin{acknowledgements}
The research of M.\;Meiners was supported by DFG SFB 878 ``Geometry, Groups and Actions''
and by short visit grant 5329 from the European Science Foundation (ESF)
for the activity entitled `Random Geometry of Large Interacting Systems and Statistical Physics'.
The research was partly carried out during mutual visits of the authors
at Aix-Marseille Universit\'e, Tech\-nische Universit\"at Graz, Technische Universit\"at Darmstadt, Universit\"at Innsbruck,
and Tech\-nische Universit\"at M\"unchen.
Grateful acknowledgement is made for hospitality from all five universities.
\end{acknowledgements}

\begin{appendix}
\section{Auxiliary results from random walk theory}

We use the following result, which may be of interest in its own right.

\begin{lemma}	\label{Lem:moderate deviations estimates}
Let $\xi_1, \xi_2,\ldots$ be random variables
on some probability space with underlying probability measure $\Prm$ (and expectation $\Erm$),
and let $S_n \defeq \xi_1+\ldots+\xi_n$, $n \in \N_0$.
\begin{enumerate}[(a)]
	\item
		Let $\alpha > 0$. If $K$ is an $\N_0$-valued random variable
		with $\Erm[K^{\alpha}]<\infty$ and if $\xi_1,\xi_2,\ldots$ are i.i.d.\ with
		$\Erm[|\xi_1|^{\alpha+1}]<\infty$, then $\Erm[S_K^\alpha]<\infty$.
	\item
		Assume that $\xi_1,\xi_2,\ldots$ are nonnegative and i.i.d.\ under $\Prm$
		with $\Erm[e^{\vartheta \xi_1}] < \infty$ for some $\vartheta > 0$.
		Then, for every $\varepsilon > 0$ and $\delta \in (0,\frac{1}{2})$ 
		there is a random variable $K_1$ with $\Erm[K_1^2] < \infty$ such that, for all $n \in \N$,
		\begin{equation}	\label{eq:bound on xi_n}	\textstyle
		\xi_n \leq K_1 + \varepsilon n^{\frac12+\delta} \text{ a.\,s}.
		\end{equation}
	\item
		Assume that $\xi_1,\xi_2,\ldots$ are i.i.d., centered random variables under $\Prm$
		with $\Erm[e^{\vartheta |\xi_1|}] < \infty$ for some $\vartheta > 0$.
		Then, for every $\varepsilon > 0$ and $\delta \in (0,\frac{1}{2})$ 
		there exists a random variable $K_2$ with $\Erm[K_2^2] < \infty$ such that, for all $n \in \N$,
		\begin{equation}	\label{eq:bound on S_n}	\textstyle
		|S_n| \leq K_2 + \varepsilon n^{\frac12+\delta} \text{ a.\,s}.
		\end{equation}
	\item
		Assume that $(S_n)_{n \in \N_0}$ is a martingale and that there is a constant $C > 0$
		with $\Prm(|\xi_n| \leq C) = 1$ for all $n \in \N$.
		Then, for every $\varepsilon > 0$ and $\delta \in (0,\frac{1}{4})$
		there exists a random variable $K_2$ with $\Erm[K_2^2] < \infty$
		such that \eqref{eq:bound on S_n} holds.
\end{enumerate}
\end{lemma}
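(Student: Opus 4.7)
The plan is to treat each part by a tail-summation / Borel--Cantelli-type argument, with a common philosophy: capture the rare event that $|S_n|$ (respectively $\xi_n$) is appreciably larger than $\sqrt{n}$ inside one auxiliary random variable, and control its moments by summing exponentially small tail contributions.

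For part (a) the natural route is to condition on $K$. If $K$ is independent of $(\xi_i)_{i\ge 1}$, then
\begin{equation*}
\E[|S_K|^\alpha]\;=\;\sum_{n\ge 0}\E[|S_n|^\alpha]\,\Prm(K=n),
\end{equation*}
and for $\alpha\ge 1$ the convexity bound $|S_n|^\alpha\le n^{\alpha-1}\sum_{i=1}^n|\xi_i|^\alpha$ gives $\E[|S_n|^\alpha]\le n^\alpha\E[|\xi_1|^\alpha]$, yielding $\E[|S_K|^\alpha]\le \E[|\xi_1|^\alpha]\,\E[K^\alpha]<\infty$; for $\alpha\in(0,1)$ the even easier subadditivity $|S_n|^\alpha\le\sum_i|\xi_i|^\alpha$ does the job. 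In the more general case where $K$ may depend on the $\xi_i$ (typically as a stopping time), one invokes a Gut-type moment transfer for stopped random walks; the slightly stronger hypothesis $\E[|\xi_1|^{\alpha+1}]<\infty$ is precisely what upgrades $\E[K^\alpha]<\infty$ to $\E[|S_K|^\alpha]<\infty$.

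For parts (b), (c) and (d) I would take the pathwise-optimal choice
\begin{equation*}
K_\star\;\defeq\;\sup_{n\ge 1}\bigl(Z_n-\varepsilon n^{1/2+\delta}\bigr)_+,
\end{equation*}
with $Z_n=\xi_n$ in (b) and $Z_n=|S_n|$ in (c), (d); the pointwise inequality then holds by construction, so the entire task reduces to verifying $\E[K_\star^2]<\infty$. Using $K_\star^2\le\sum_{n\ge 1}(Z_n-\varepsilon n^{1/2+\delta})_+^2$ and integrating by parts,
\begin{equation*}
\E[K_\star^2]\;\le\;\sum_{n\ge 1}\int_0^\infty 2t\,\Prm\!\bigl(Z_n>\varepsilon n^{1/2+\delta}+t\bigr)\,\dt.
\end{equation*}
In (b) the Chernoff bound from $\E[e^{\vartheta\xi_1}]<\infty$ gives $\Prm(\xi_n>\varepsilon n^{1/2+\delta}+t)\le C e^{-\vartheta(\varepsilon n^{1/2+\delta}+t)}$, whose $n$-th contribution is of order $\exp(-\vartheta\varepsilon n^{1/2+\delta})$ and therefore summable, with the $t$-integral contributing only a fixed polynomial in $\vartheta^{-1}$. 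In (c) a Bernstein/Cram\'er estimate of the form $\Prm(|S_n|>x)\le 2\exp(-c x^2/n)$ holds in the sub-Gaussian regime, which, since $\delta<1/2$, covers the whole range $x=\varepsilon n^{1/2+\delta}+t$ of interest and produces a Gaussian factor $\exp(-c\varepsilon^2 n^{2\delta})$, again summable. Case (d) is handled the same way, with Azuma--Hoeffding furnishing the Gaussian tail at all scales; the restriction $\delta<1/4$ ensures comfortable summability in both $n$ and $t$.

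The main obstacle I anticipate is part (a) in the general (dependent) case, where the Gut-type moment transfer from $K$ to $S_K$ is classical but has to be set up carefully around the hypothesis $\E[|\xi_1|^{\alpha+1}]<\infty$. The remaining parts are essentially routine Gaussian tail estimates, and the only subtlety is keeping $\varepsilon$, $\delta$ and the integration range of $t$ all in the regime where the quadratic part of the Bernstein/Azuma bound is the binding one; this is exactly what the upper bounds on $\delta$ in the hypotheses guarantee.
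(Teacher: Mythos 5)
Your treatment of parts (b)--(d) is correct and takes a genuinely different route from the paper's. The paper defines a last-exit time $L \defeq \max\{n: Z_n > \varepsilon n^{1/2+\delta}\}$, shows that $\Prm(L \geq n)$ decays rapidly (via a union bound plus Chernoff for (b), the moderate deviation principle of Dembo--Zeitouni for (c), and Azuma--Hoeffding for (d)), and then sets $K_1 \defeq S_L$, resp.\ $K_2 \defeq L \vee \sum_{j \leq L}|\xi_j|$, invoking part (a) to obtain square integrability of these random sums. Your pathwise-optimal choice $K_\star \defeq \sup_n (Z_n - \varepsilon n^{1/2+\delta})_+$, combined with $K_\star^2 \leq \sum_n (Z_n - \varepsilon n^{1/2+\delta})_+^2$ and tail integration, sidesteps random sums --- and hence part (a) --- entirely, which makes (b)--(d) more self-contained. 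One imprecision in (c): under a mere exponential moment the sub-Gaussian bound $\Prm(|S_n|>x) \leq 2e^{-cx^2/n}$ is valid only for $x = O(n)$; for $t \gg n$ Bernstein's inequality degrades to $e^{-cx}$. That regime is even more summable in the $t$-integral, so your estimate survives, but you must quote the full Bernstein bound rather than assert that the quadratic regime covers the whole range of $t$ (note this is the opposite of the ``subtlety'' you flag at the end: for large $t$ the quadratic part is \emph{not} the binding one, and you need the linear part there).

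The genuine gap is in part (a). The lemma assumes nothing about $K$ beyond $\Erm[K^\alpha]<\infty$: it is neither independent of the $\xi_i$ nor a stopping time. In the paper's own application of (a) inside the proofs of (b) and (c), the random index is a last-exit time, which is emphatically not a stopping time, so the ``Gut-type moment transfer for stopped random walks'' you propose does not apply to the generality required. Your computation in the independent case is fine (and, tellingly, uses only $\Erm[|\xi_1|^\alpha]<\infty$), but the general case needs a moment bound for randomly stopped sums with \emph{arbitrary} dependence between $K$ and the summands; this is precisely the content of the result the paper cites (Corollary 1 of Gnedin and Iksanov, 2011), and the strengthened hypothesis $\Erm[|\xi_1|^{\alpha+1}]<\infty$ is the condition of that theorem, not something that upgrades a stopping-time argument. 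As written, your part (a) does not establish the statement in the generality in which it is both asserted and used.
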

\begin{proof}
Assertion (a) follows from \cite[Corollary 1]{Gnedin+Iksanov:2011}.

\noindent
For the proof of (b), fix $\varepsilon > 0$ and $\delta \in (0,\frac12)$. Then define
\begin{equation*}
L \defeq \max\{n \in \N_0: n=0 \text{ or } n \geq 1 \text{ and } \xi_n > \varepsilon n^{\frac12+\delta}\}.
\end{equation*}
For $n \geq 1$, the union bound and Markov's inequality give
\begin{align}	\label{eq:eta_n bound}
\Prm(L \geq n)
&= \Prm(\xi_k > \varepsilon k^{\frac12+\delta} \text{ for some } k \geq n)	\notag	\\
&\leq	\textstyle \sum_{k \geq n} \Prm(\xi_k \geq \varepsilon k^{\frac12+\delta})
\leq \Erm[e^{\vartheta \xi_1}] \sum_{k \geq n} e^{-\vartheta \varepsilon k^{\frac12+\delta}}.
\end{align}
Hence, $\Prm(L \geq n)$ decays faster than any negative power of $n$ as $n \to \infty$.
With $K_1 \defeq S_L$, we have $\Erm[K_1^2]<\infty$ from (a) and, for all $n \in \N$, 
\begin{equation*}
\xi_n \leq K_1 + \varepsilon n^{\frac12+\delta}.
\end{equation*}

\noindent
For the proof of assertion (c), we use 
moderate deviation estimates (see e.g.\;\cite[Theorem 3.7.1]{Dembo+Zeitouni:2010}).
The cited theorem gives
\begin{equation*}
\lim_{n \to \infty} n^{-2\delta} \log \Prm(|S_n| \geq \varepsilon n^{\frac12+\delta}) = -\tfrac{\varepsilon^2}{2 \Varm[\xi_1]}.
\end{equation*}
Hence, for any $c \in (0,\frac{\varepsilon^2}{2 \Varm[\xi_1]})$, we have
\begin{equation*}
\Prm(|S_n| \geq \varepsilon n^{\frac12+\delta}) \leq e^{- c n^{2\delta}}
\end{equation*}
for all sufficiently large $n$. With
$L \defeq \max\{n \in \N_0: |S_n| \geq \varepsilon n^{\frac12+\delta}\}$,
we infer for sufficiently large $n$
\begin{align}
\Prm(L \geq n)
&= \Prm(|S_k| \geq \varepsilon k^{\frac12+\delta} \text{ for some } k \geq n)	\notag	\\
&\leq	\textstyle
\sum_{k \geq n} \Prm(|S_k| \geq \varepsilon k^{\frac12+\delta})
\leq
\sum_{k \geq n} e^{- c k^{2\delta}}.	\label{eq:tail bound L}
\end{align}
In particular, $L$ has finite power moments of all orders.
Now define $K_2 \defeq L \vee (\sum_{j=1}^L |\xi_j|)$.
Then $\Erm[K_2^2] < \infty$ by assertion (a) and, for all $n \in \N$,
\begin{equation}	\label{eq:bound on |S_n|}	\textstyle
|S_n| \leq K_2 + \varepsilon n^{\frac12+\delta}.
\end{equation}

\noindent
Assertion (d) follows from an application of the Azuma-Hoeffding inequality \cite[E14.2]{Williams:1991}.
The cited inequality gives for $L_\pm \defeq \max\{n \in \N_0: \pm S_n \geq \varepsilon n^{\frac12+\delta}\}$
\begin{equation*}
\Prm(L_\pm \geq n)
\leq \sum_{k \geq n} \Prm(\pm S_k \geq \varepsilon k^{\frac12+\delta})
\leq \sum_{k \geq n} e^{-\varepsilon^2 k^{2\delta}/2 C^2}.
\end{equation*}
As above, we conclude that $L \defeq L_+ \vee L_-$ has finite power moments of all orders and,
for all $n \in \N$, \eqref{eq:bound on |S_n|} holds with $K_2 \defeq CL$.
\end{proof}

Finally, we use the following lemma for biased nearest-neighbor random walk on $\Z$.
It is possible that the result is available in the literature. However, we have not been able to locate it.

\begin{lemma}	\label{Lem:aux result biased RW Z}
Let $(S_n)_{n \in \N_0}$ be a biased nearest-neighbor random walk on $\Z$
with respect to some probability measure $\Prm$, i.e.,
$\Prm(S_0=0)=1$ and
\begin{equation*}
\tfrac12 < r \defeq \Prm(S_{n+1} = k+1 \mid S_n = k) = 1-\Prm(S_{n+1} = k-1 \mid S_n = k)
\end{equation*}
for all $k \in \Z$ and $n \in \N_0$. Further, let
\begin{equation*}
\varrho \defeq \inf\{j \in \N: S_i < S_j \leq S_k \text{ for all } 0 \leq i < j \leq k\}
\end{equation*}
be the first positive point the walk visits from which it never steps to the left.
Then there exist finite constants $C^*,c^* = c^*(r) > 0$ such that
$\Prm(\tau \geq k) \leq C^* e^{-c^* k}$ for all $k \in \N_0$.
\end{lemma}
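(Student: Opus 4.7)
My plan is to reinterpret $\varrho$ as the hitting time of a random ``permanent ladder height,'' decouple that height from the time, and bound each tail separately. Let $V\defeq S_\varrho$; by the definition of $\varrho$, $V$ is the smallest $m\geq 1$ such that after the walk's first visit $T_m\defeq\inf\{n\geq 0:S_n=m\}$ to level $m$, the walk never returns to $m-1$. Then $\varrho=T_V$, and since $T_v$ is non-decreasing in $v$ the decomposition
\begin{equation*}
\Prm(\varrho\geq k)\leq\Prm(V>\lfloor\alpha k\rfloor)+\Prm(T_{\lfloor\alpha k\rfloor}\geq k)
\end{equation*}
holds for any $\alpha>0$. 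I would pick $\alpha\in(0,2r-1)$ and show that each term decays exponentially in $k$.

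The second term is handled by standard large-deviation arguments: by strong Markov at $T_1,T_2,\ldots$, $T_v$ is a sum of $v$ i.i.d.\ copies of $T_1$, and since the walk is strictly right-biased, the moment generating function $\phi(\theta)\defeq\Erm[e^{\theta T_1}]$ is finite for some $\theta>0$, with $\phi(0)=1$ and $\log\phi(\theta)\sim\theta/(2r-1)$ as $\theta\downarrow 0$. A Chernoff bound gives $\Prm(T_{\lfloor\alpha k\rfloor}\geq k)\leq e^{-\theta k}\phi(\theta)^{\alpha k}$, and for $\alpha<2r-1$ one can choose $\theta>0$ small enough that the exponent is strictly negative and linear in $k$.

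The first term is the heart of the argument. I would introduce the ``undershoot'' $U_m\defeq m-\min_{n\geq T_m}S_n\geq 0$, so that $\{V>m\}=\{U_1\geq 1,\ldots,U_m\geq 1\}$. By the strong Markov property at $T_m$ and translation invariance, each $U_m$ is geometric with $\Prm(U_m\geq j)=q^j$, where $q\defeq(1-r)/r<1$. Writing $D_m$ for the maximum depth reached below level $m$ during the interval $[T_m,T_{m+1})$, the $D_m$ are i.i.d.\ (with distribution determined by gambler's ruin), and one obtains the pathwise recursion $U_m=\max(D_m,U_{m+1}-1)$ in which $D_m$ is independent of the walk after $T_{m+1}$ and hence of $U_{m+1}$. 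This independence structure together with stationarity of $(D_m)$ makes $(U_m)_{m\geq 1}$ a stationary Markov chain on $\N_0$, and a direct computation of its transition kernel $P$ yields
\begin{equation*}
P(u,0)\;=\;\frac{(1-q)^2}{(1-q^{u+1})(1-q^{u+2})}\;\geq\;(1-q)^2\;>\;0
\end{equation*}
uniformly in $u\geq 0$. Iterating this uniform escape probability gives $\Prm(V>m)\leq q\,(1-(1-q)^2)^{m-1}$, which is exponentially small, completing the bound on the first term.

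The principal obstacle is the Markov-chain analysis of $(U_m)$: establishing the Markov property from the i.i.d.\ structure of $(D_m)$ via the recursion, and executing the gambler's-ruin calculation to obtain the explicit formula for $P(u,0)$ (from which the uniform lower bound $(1-q)^2$ follows by noting that $(1-q^{u+1})(1-q^{u+2})\leq 1$). Given these, the two exponential estimates combine to produce the advertised $\Prm(\varrho\geq k)\leq C^*e^{-c^*k}$ with constants $C^*,c^*>0$ depending only on $r$.
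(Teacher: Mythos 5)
Your argument is correct, and it is worth noting that the paper itself gives no proof here: it merely declares the statement standard, pointing to ``the usual recursive construction of regeneration times'' and the gambler's ruin formula. Your write-up is a complete and valid implementation of exactly that strategy. The decomposition $\varrho=T_V$ with $\Prm(\varrho\geq k)\leq\Prm(V>\lfloor\alpha k\rfloor)+\Prm(T_{\lfloor\alpha k\rfloor}\geq k)$ is sound; the Chernoff bound for $T_{\lfloor\alpha k\rfloor}$ works for any $\alpha<2r-1$ since $T_1$ has some finite exponential moment (e.g.\ because $\Prm(T_1>n)\leq\Prm(S_n\leq 0)$ decays exponentially); and I have checked your gambler's-ruin computation: with $\Prm(D_m\leq u)=(1-q^{u+1})/(1-q^{u+2})$ one gets $\Prm(D_m=u)=q^u(1-q)^2/((1-q^{u+1})(1-q^{u+2}))$, and Bayes' rule against the geometric marginal $\Prm(U_m=u)=q^u(1-q)$ gives precisely your $P(u,0)$. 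The one step you should spell out in a full write-up is the Markov property of $(U_m)$ in the \emph{forward} direction, since the recursion $U_m=\max(D_m,U_{m+1}-1)$ naturally runs backwards: it suffices to observe that $(U_1,\dots,U_{m-1})$ is a function of $(D_1,\dots,D_{m-1},U_m)$ while $U_{m+1}$ is a function of $(D_{m+1},D_{m+2},\dots)$, and $(D_1,\dots,D_{m-1})$ is independent of $(U_m,U_{m+1})$; this yields the conditional independence needed to iterate $\Prm(U_{j+1}\geq 1\mid U_1\geq1,\dots,U_j\geq 1)\leq 1-(1-q)^2$. Compared with the textbook recursion (a geometric number of failed regeneration attempts, each followed by a geometrically distributed overshoot), your stationary undershoot chain is a slightly unusual but arguably cleaner device, since it avoids tracking overshoots altogether; either route yields the claimed exponential bound with constants depending only on $r$.
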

\begin{proof}
The proof is standard and relies on the usual recursive construction of regeneration times,
see e.g.\;\cite{Kesten:1977}, and the Gambler's ruin formula.
We omit the details. 
\end{proof}

\end{appendix}


\bibliographystyle{spmpsci}
\bibliography{RWRE}

\end{document}